\documentclass[final,leqno]{siamltex}
\usepackage{amsmath}
\usepackage{amssymb}
\usepackage{cases}
\usepackage{enumerate}
\usepackage{graphicx}
\usepackage{indentfirst}
\usepackage{doi}
\usepackage{cleveref}
\usepackage[usenames]{color}
\usepackage{wrapfig}
\usepackage{stmaryrd}
\usepackage{array} % <- Preamble
\usepackage[caption=false]{subfig}
\usepackage{booktabs}
\SetSymbolFont{stmry}{bold}{U}{stmry}{m}{n}

\usepackage{multirow}
\usepackage{mathrsfs}
\usepackage[normalem]{ulem}
\usepackage{enumitem}

\newtheorem{thm}{Theorem}[section]
\newtheorem{defi}{Definition}[section]

\newtheorem{rem}[thm]{Remark}

\renewcommand{\theequation}{\arabic{section}.\arabic{equation}}

\newcommand{\norm}[1]{\Vert#1\Vert}

\newcommand{\bR}{{\mathbb R}}
\newcommand{\mT}{\mathscr{T}}
\newcommand{\mQ}{\mathcal{Q}}

\newcommand{\Gt}{{\Gamma(t)}}
\newcommand{\Gm}{{\Gamma^m}}

\newcommand{\bV}{\mathbb{V}}

\newcommand{\vol}{\operatorname{vol}}
\newcommand{\dH}{{\rm d}\mathcal{H}}

\newcommand{\mH}{\mathcal{H}}
\newcommand{\rd}{\;{\rm d}}
\newcommand{\id}{{\rm id}}
\newcommand{\dd}[1]{\frac{\rm d}{{\rm d}#1}}
\newcommand{\ddt}{\dd{t}}
\newcommand{\nn}{\nonumber}

\newcommand{\ipd}[1]{\bigl(#1\bigr)}
\newcommand{\nabs}{\nabla_{\!s}}

\graphicspath{{./figures/}{figures/}}

\newcommand{\Id}{{I\!d}}

\title{A minimizing-movement framework for geometric gradient flows with admissible
tangential motion}

\author{Xiaoxiao Liu\thanks{School of Mathematical Sciences, University of Science and
Technology of China, 230026 Hefei, Anhui, China (xxl0226@mail.ustc.edu.cn)}
\and Quan Zhao\thanks{School of Mathematical Sciences, University of Science and
Technology of China, 230026 Hefei, Anhui, China (quanzhao@ustc.edu.cn)}
}

\date{}
\begin{document}

\maketitle
%%%%% Begin Abstract %%%%%%%%%%%

\begin{abstract}
We develop a minimizing-movement framework for parametric finite element approximations
of geometric gradient flows with admissible tangential motion. At each time step, the
discrete variational problem combines a metric dissipation term for the normal
displacement with a surface Dirichlet energy. The metric determines the normal
geometric evolution: the $L^2(\Gamma)$ metric gives mean curvature flow, while the
$H^{-1}(\Gamma)$ metric gives surface diffusion flow. Tangential velocity is selected
independently through weak constraints on the deformation map. The central structural
condition is admissibility, namely, that the identity map satisfies the constraint. This
condition keeps the identity map available as a comparison function and yields the
natural stability estimate. The framework recovers the classical
Barrett--Garcke--N\"urnberg (BGN) scheme from the unconstrained formulation and the
dual minimal-deformation-rate (MDR) scheme from the MDR constraint. We further
introduce two new admissible variants: an admissible BGN scheme and a relaxed MDR
scheme. For the resulting fully discrete schemes, we prove existence and uniqueness
under natural nondegeneracy assumptions and establish unconditional energy stability.
Numerical experiments compare the admissible and classical schemes and illustrate their
stability properties and mesh-quality behavior.

\end{abstract}
%%%%% end %%%%%%%%%%%

%%%%% Keywords %%%%%%%%%%%

\begin{keywords}
minimizing movements, admissible tangential motion,
parametric finite elements, mean curvature flow, surface diffusion, energy stability
\end{keywords}

\begin{AMS}
65M60,  65M12, 35R01, 49M41
\end{AMS}

\pagestyle{myheadings} \markboth{X. Liu and Q.~Zhao}
{A minimizing-movement framework for geometric flows}

%=============================================Introduction===============================================
%=====================================================================================================
\section{Introduction} \label{sec:intro}

Geometric gradient flows provide a variational framework for interface evolution in many
physical systems and arise naturally in applications ranging from grain growth and
biomembrane dynamics to thin-film technology. For such applications, see,
e.g.,~\cite{Mullins57,Helfrich73elastic,cahn1994, Thompson12solid}. Typical examples
include mean curvature flow and surface diffusion flow, where the interface motion is
governed by a normal velocity law involving curvature quantities. Let
$\{\Gt\}_{t\geq0}\subset\bR^d$, $d\in\{2,3\}$, be a family of smooth, oriented closed
hypersurfaces. The corresponding normal velocity laws are
\begin{equation}\label{eq:norvel}
\mathcal V =\left\{\begin{array}{cl}
\varkappa\quad &\mbox{for mean curvature flow},\\[0.3em]
-\Delta_s\varkappa\quad &\mbox{for surface diffusion flow},
\end{array}
\right.
\end{equation}
where $\mathcal V$ denotes the normal velocity of $\Gt$ in the direction of the unit
normal $\vec\nu$, $\varkappa$ denotes the scalar mean curvature, and $\Delta_s$ is the
Laplace--Beltrami operator on $\Gt$. We denote by $|\Gt|$ the surface area of $\Gt$.
Then the two velocity laws in~\eqref{eq:norvel} are, respectively, the $L^2$- and
$H^{-1}$-gradient flows of the surface area. Consequently, they satisfy the
energy-dissipation identity
\begin{equation}\label{eq:dissplaw}
\ddt|\Gt|= -\norm{\mathcal V}_X^2 =\left\{\begin{array}{ll}
-\int_{\Gt}\varkappa^2\dH^{d-1}\quad &\mbox{for mean curvature flow},\\[0.4em]
-\int_{\Gt}|\nabs\varkappa|^2\dH^{d-1}\quad &\mbox{for surface diffusion flow},
\end{array}
\right.
\end{equation}
where $\norm{\cdot}_X$ denotes the natural metric norm, with $X=L^2(\Gt)$ for mean
curvature flow and $X=H^{-1}(\Gt)$ for surface diffusion flow. Here $\nabs$ is the
surface gradient, and $\dH^{d-1}$ denotes integration with respect to the
$(d-1)$-dimensional Hausdorff measure in
$\bR^d$.

Over the past several decades, many numerical methods have been developed for the two
gradient flows in~\eqref{eq:norvel}. Among them, parametric finite element methods,
abbreviated here as parametric FEMs, have attracted particular attention;
see~\cite{Dziuk91,Deckelnick2005,Barrett20}. Much effort has therefore been devoted to
the design of structure-preserving parametric FEMs that inherit the gradient-flow
structure at the fully discrete level, especially the energy decay property. Early
examples include Dziuk's method for mean curvature flow~\cite{Dziuk91} and the mixed
parametric FEM for surface diffusion flow~\cite{BMN05}. In both approaches, however, the
surface is evolved by a purely normal motion, without additional tangential velocity,
which can lead to severe degeneration of the polyhedral mesh. Indeed, the geometric
evolution is determined solely by the normal velocity at the continuous level, whereas
tangential velocity changes the parametrization and therefore plays a crucial role in
surface mesh quality at the discrete
level~\cite{Remacle10,Mikula14,DeTurck17,Duan24new,PAN26}. Incorporating such tangential
motion without destroying energy stability is a delicate issue.

This difficulty has motivated several energy-stable parametric FEMs with carefully
designed tangential motion. The schemes of Barrett, Garcke, and N\"urnberg (BGN) rely on
the curvature identity and provide an elegant class of energy-stable methods for
curvature-driven flows~\cite{BGN07,BGN08parametric,BGN08willmore}. The BGN scheme
induces a favorable built-in tangential velocity that, under semidiscretization, yields
so-called conformal polyhedral surfaces in $\bR^3$. The energy-stable minimal
deformation rate (MDR) schemes~\cite{Gao26,Gao26dualMDR} were later proposed to control
the undesirable mesh instabilities observed in BGN schemes for small time-step sizes.
The MDR idea was first explored in the context of a convergence analysis of mean
curvature flow for surfaces~\cite{Hu22evolving}. Other related developments can be
found, for example, in~\cite{Duan25,Kemmochi25structure,GJSZ25,LX25}.

Despite these advances, existing energy-stable parametric FEMs are still largely
constructed case by case, and their stability estimates are typically tied to specific
choices of tangential velocity. A general structural condition that allows tangential
motion to be incorporated without destroying the energy estimate remains unclear. This
motivates a unified variational framework for designing energy-stable parametric FEMs
with different tangential constraints.

One motivation for the present work is the recent normal--tangential velocity-splitting
technique in~\cite{GNZ25willmore, GNZ26}, where an energy-stable parametric FEM for
Willmore flow was proposed while allowing the tangential velocity to be chosen
independently. In their formulation, separate equations are used to approximate the
gradient-flow structure and to impose the tangential velocity. A second motivation
comes from minimizing-movement frameworks for Willmore
flow~\cite{BalzaniR12,OlischlagerR09,RumpfSS25preprint}, which provide energy stability
through variational time discretization. In those works, however, the metric dissipation
is usually based on the full surface velocity. Motivated by these ideas, we develop a
minimizing-movement framework in which the metric dissipation determines only the normal
velocity, while tangential motion is introduced through a weak constraint on the
deformation map. At the level of the constrained minimizing problem, the key requirement
for energy stability is admissibility: the identity map must satisfy the constraint.
This condition keeps the identity map available as a comparison map and thereby yields
the natural energy estimate.

Within this framework, the classical BGN scheme in~\cite{BGN08parametric} is recovered
from the unconstrained formulation, while the dual-MDR scheme in~\cite{Gao26dualMDR} is
recovered from the MDR constraint. We then introduce two new admissible constructions.
The first is an admissible BGN-type scheme based on a mass-lumped vector curvature and a
corresponding modified vertex normal, which restores admissibility of the discrete BGN
constraint. The second is a relaxed MDR scheme, where a relaxed tangential projection
term is added to the MDR constraint. For both schemes, we prove existence and uniqueness
under mild nondegeneracy assumptions and establish unconditional energy stability.

The remainder of the paper is organized as follows. In~\S\ref{sec:mm-classical}, we
formulate the constrained minimizing-movement framework and identify the admissibility
condition that yields the basic energy estimate. We also derive the corresponding
Euler--Lagrange systems and show how the unconstrained BGN formulation and the MDR
constraint fit into this setting. In~\S\ref{sec:pfem}, we introduce the parametric
finite element discretizations. Section~\ref{sec:mm-admissible} contains the main fully
discrete constructions, namely the admissible BGN scheme and the relaxed MDR scheme,
together with their solvability and unconditional stability results.
In~\S\ref{sec:further-insights}, we discuss possible extensions of the framework.
Numerical results are presented in~\S\ref{sec:numerics}, and concluding remarks are
given in~\S\ref{sec:conclusions}.

\section{Minimizing-movement framework}
\label{sec:mm-classical}

Let $\Gamma\subset\bR^d$ be a smooth, orientable hypersurface without boundary. The
standard $L^2$ and $H^1$ spaces on $\Gamma$ are denoted by
$L^2(\Gamma)$ and $H^1(\Gamma)$.  For any Banach space $X$, we denote its dual space by
$X^\prime$, and the associated duality pairing by
$\langle\cdot,\cdot\rangle_{X^\prime, X}$.  If $X$ is a Hilbert space, its inner
product is denoted by $\ipd{\cdot,\cdot}_X$.

For $X=L^2(\Gamma)$ or $H^{-1}(\Gamma)$, we consider the functional
\begin{equation}
J:[H^1(\Gamma)]^d\to\bR,\qquad
J(\vec u) :=\frac{1}{2\tau}\norm{(\vec u - \vec\id|_{\Gamma})\cdot\vec\nu}^2_{X}
+ \frac{1}{2}\int_{\Gamma}|\nabs\vec u|^2\dH^{d-1},
\end{equation}
where $\norm{\cdot}_{X}$ is the norm induced by the inner product
$\ipd{\cdot, \cdot}_X$, $\tau>0$ is the time step
size, $\vec\nu$ is the outward unit normal to $\Gamma$, and $\vec\id$ is the identity
map in $\bR^d$.

We do not use $|\vec u(\Gamma)|$ directly, because this quantity is highly nonlinear as
a function of the parametrization $\vec u$. Instead, we use its Dirichlet-energy
approximation around the identity; see~\cite[(2.1)]{BalzaniR12}:
\begin{equation*}
|\vec u(\Gamma)|
= \frac{3-d}{2}|\Gamma|
+ \frac{1}{2}\int_{\Gamma}|\nabs\vec u|^2\,\dH^{d-1}
+ O\bigl(\norm{\vec u-\vec\id}_{C^1(\Gamma)}^2\bigr).
\end{equation*}
Here the constant term is omitted from $J$, since it is independent of
$\vec u$.  Moreover, in contrast to the full-velocity dissipation used
in~\cite{BalzaniR12}, our metric dissipation involves only the normal
displacement, or equivalently the normal velocity scaled by the time step. This is
consistent with~\eqref{eq:dissplaw} and leaves the tangential velocity to be selected
independently.

We minimize $J(\vec u)$ either without constraints or subject to a weak constraint
through a feasible set. For later use, set
$\mathcal U=[H^1(\Gamma)]^d$.

\begin{defi}[weak constraint]
Let $\mathcal Z$ be a Banach space for the auxiliary unknown $\lambda$, and let
$\mathcal Y$ be a Hilbert test space.  A weak constraint is a residual operator
\[
\mathcal C:\mathcal U\times\mathcal Z\to\mathcal Y',
\]
or, equivalently, a form
\begin{equation}\label{eq:weak-constraint}
\mathcal G(\vec u,\lambda;\vec y)
=\langle\mathcal C(\vec u,\lambda),\vec y\rangle_{\mathcal Y',\mathcal Y},
\qquad \vec y\in\mathcal Y ,
\end{equation}
which is linear and continuous in the test function $\vec y$. The constraint is imposed
by
\[
\mathcal C(\vec u,\lambda)=0\quad\mbox{in }\mathcal Y',
\quad\mbox{or equivalently}\quad
\mathcal G(\vec u,\lambda;\vec y)=0\quad\forall\,\vec y\in\mathcal Y.
\]
\end{defi}

Here $\lambda\in\mathcal Z$ denotes the auxiliary unknown appearing in the constraint.
If no such unknown is needed, we set $\mathcal Z=\{0\}$ and suppress $\lambda$. Since
$\mathcal Y$ is Hilbert, one may equivalently use the Riesz representative
%$\widehat{\mathcal C}(\vec u,\lambda)=R_{\mathcal Y}^{-1}\mathcal C(\vec u,\lambda)$
%in $\mathcal Y$ 
to write
$\mathcal G(\vec u,\lambda;\vec y)=
\ipd{\widehat{\mathcal C}(\vec u,\lambda),\vec y}_{\mathcal Y}$.  We keep the
notation $\mathcal C(\vec u,\lambda)\in\mathcal Y'$ to emphasize that the constraint is
imposed in weak form. We introduce the feasible set of the constraint by
\begin{equation}\label{eq:feasibleset}
\mathcal A
=\Bigl\{\vec u\in\mathcal U:\,
\text{there exists }\lambda\in\mathcal Z
\quad\text{such that}\quad \mathcal C(\vec u,\lambda)=0
\quad\text{in}\quad \mathcal Y'\Bigr\}.
\end{equation}
When no constraint is imposed, we take $\mathcal Y=\{0\}$ and
$\mathcal C\equiv0$, so that $\mathcal A=\mathcal U$.  The corresponding
minimizing problem is
\begin{align}
    \min_{\vec u\in\mathcal A} J(\vec u).
    \label{eq:constrained-min}
\end{align}

\begin{defi}[admissible constraint]
The feasible set $\mathcal A$ in~\eqref{eq:feasibleset}, or equivalently the weak
constraint~\eqref{eq:weak-constraint} defining it, is called admissible if the identity
map belongs to the feasible set:
\begin{equation}\label{eq:admissible}
\vec\id|_{\Gamma}\in\mathcal A.
\end{equation}
Equivalently, there exists an auxiliary variable $\lambda_{\id}\in\mathcal Z$ such that
\begin{equation}\label{eq:adc}
\mathcal G(\vec\id|_{\Gamma},\lambda_{\id};\vec y)=0
\quad\forall\,\vec y\in\mathcal Y.
\end{equation}
\end{defi}

The unconstrained case is automatically admissible. We have the following stability
estimate for solutions of the minimization problem~\eqref{eq:constrained-min}.
\begin{thm}[stability estimate]
If $\vec u^*$ solves~\eqref{eq:constrained-min} and the feasible set
$\mathcal A$ is admissible, then
\begin{equation}\label{eq:stab1c}
\frac{1}{2}\int_{\Gamma}|\nabs\vec u^*|^2\,\dH^{d-1}
+ \frac{1}{2\tau}\norm{(\vec u^*-\vec\id|_{\Gamma})\cdot\vec\nu}^2_X
\leq \frac{1}{2}\int_{\Gamma}|\nabs\vec\id|^2\,\dH^{d-1}.
\end{equation}
\end{thm}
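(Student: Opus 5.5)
The estimate follows from a direct comparison argument: since $\vec u^*$ minimizes $J$ over $\mathcal A$, its value cannot exceed the value of $J$ at any other feasible map. The admissibility condition~\eqref{eq:admissible} says that $\vec\id|_{\Gamma}\in\mathcal A$, so the identity map is one such competitor.

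First, I will use~\eqref{eq:admissible}, or equivalently~\eqref{eq:adc} with some $\lambda_{\id}\in\mathcal Z$, to record that $\vec\id|_{\Gamma}$ lies in $\mathcal A$. I also need $\vec\id|_{\Gamma}\in\mathcal U=[H^1(\Gamma)]^d$, which holds because $\Gamma$ is smooth and compact. Second, I will evaluate $J$ at the identity. Its dissipation term is
\[
\frac{1}{2\tau}\norm{(\vec\id|_{\Gamma}-\vec\id|_{\Gamma})\cdot\vec\nu}_X^2=0,
\]
since the norm of the zero function vanishes in either choice $X=L^2(\Gamma)$ or $X=H^{-1}(\Gamma)$. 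Hence
\[
J(\vec\id|_{\Gamma})=\frac12\int_\Gamma|\nabs\vec\id|^2\dH^{d-1}.
\]
Third, minimality of $\vec u^*$ over $\mathcal A$ gives $J(\vec u^*)\le J(\vec\id|_{\Gamma})$. Writing out $J(\vec u^*)$ term by term then yields~\eqref{eq:stab1c} directly.

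There is no real obstacle here. The only points needing care are that the comparison map is genuinely feasible, which is exactly the admissibility hypothesis, and that the $X$-norm of the zero normal displacement is zero. In the $H^{-1}(\Gamma)$ case, the norm is understood on the appropriate mean-zero subspace, and the zero function belongs to it, so this also holds.

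It may be worth adding as a remark that $\frac12\int_\Gamma|\nabs\vec\id|^2\dH^{d-1}=\frac{d-1}{2}|\Gamma|$, because $|\nabs\vec\id|^2=\operatorname{tr}$ of the tangential projection $=d-1$. This links~\eqref{eq:stab1c} to the energy decay in~\eqref{eq:dissplaw}. It is not needed for the proof itself.
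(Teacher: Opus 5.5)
Your proposal is correct and is exactly the paper's argument: admissibility makes the identity map a feasible competitor, and $J(\vec u^*)\le J(\vec\id|_{\Gamma})$, with the dissipation term vanishing at the identity, is precisely the estimate. Your side remarks, that the $H^{-1}$ norm of the zero function is zero and that $\frac12\int_\Gamma|\nabs\vec\id|^2\dH^{d-1}=\frac{d-1}{2}|\Gamma|$, are accurate but not needed.
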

\begin{proof}
By admissibility, $\vec\id|_{\Gamma}\in\mathcal A$. Hence
$J(\vec u^*)\leq J(\vec\id|_{\Gamma})$, which is exactly~\eqref{eq:stab1c}.
\end{proof}

The admissibility keeps the identity map available as a comparison map and thereby
preserves the variational inequality of the constrained minimizing movement. In the
discrete schemes below, we implement this principle by formulating the weak constraints
so that the identity map remains feasible at the discrete level.

\subsection{Formulation without constraints}

We now consider the unconstrained case. The first variation of $J(\vec u)$ in an
arbitrary direction
$\vec\eta\in[H^1(\Gamma)]^d$ yields the first-order optimality condition for
the minimizer $\vec u^*$:
\begin{align}
0={}&J^\prime(\vec u^*)[\vec\eta] = \frac{\rd}{\rd\epsilon} J(\vec u^* + \epsilon\vec\eta)|_{\epsilon=0} \nn\\
={}&
\frac{1}{\tau}
\ipd{[\vec u^*-\vec\id]\cdot\vec\nu,~\vec\eta\cdot\vec\nu}_{X}
+ \int_{\Gamma}\nabs\vec u^*: \nabs\vec\eta\,\dH^{d-1}
\quad\forall\,\vec\eta\in[H^1(\Gamma)]^d.
\label{eq:firstv}
\end{align}

For the $L^2(\Gamma)$ metric,~\eqref{eq:firstv} becomes
\begin{equation}\label{eqn:BGNmf}
0={}
\frac{1}{\tau}\int_{\Gamma}
[\vec u^*-\vec\id]\cdot\vec\nu~(\vec\eta\cdot\vec\nu)\,\dH^{d-1}
+ \int_{\Gamma}\nabs\vec u^*: \nabs\vec\eta\,\dH^{d-1}
\quad\forall\,\vec\eta\in[H^1(\Gamma)]^d.
\end{equation}
For the $H^{-1}(\Gamma)$ metric, the inverse Laplace--Beltrami operator is understood on
the zero-mean subspace. More precisely, for a zero-mean function $f$, let $\psi_f\in
H^1(\Gamma)/\bR$ solve
\[
\int_{\Gamma}\nabs\psi_f\cdot\nabs\chi\,\dH^{d-1}
=\int_{\Gamma}f\,\chi\,\dH^{d-1}
\quad\forall\,\chi\in H^1(\Gamma).
\]
This solution is the $H^{-1}$ Riesz representative of $f$; in particular,
$\ipd{f,g}_{H^{-1}(\Gamma)}
=\int_{\Gamma}g\,\psi_f\,\dH^{d-1}$ for zero-mean $f$ and $g$. Applying this
construction with
$f=[\vec u^*-\vec\id]\cdot\vec\nu$ and setting
$z=\tau^{-1}\psi_f$,~\eqref{eq:firstv} is equivalently written as
\begin{subequations}\label{eqn:BGNsd}
\begin{align}
0={}&
\frac{1}{\tau}\int_{\Gamma}
[\vec u^*-\vec\id]\cdot\vec\nu\,\varphi\,\dH^{d-1}
- \int_{\Gamma} \nabs z\cdot \nabs\varphi\,\dH^{d-1}
\quad\forall\,\varphi\in H^1(\Gamma),
\label{eqn:BGNsd-z}\\[0.4em]
0={}&
\int_{\Gamma}z\,\vec\eta\cdot\vec\nu\,\dH^{d-1}
+ \int_{\Gamma}\nabs\vec u^*: \nabs\vec\eta\,\dH^{d-1}
\quad\forall\,\vec\eta\in[H^1(\Gamma)]^d.
\label{eqn:BGNsd-u}
\end{align}
\end{subequations}
In particular, choosing $\varphi=1$ in~\eqref{eqn:BGNsd-z} gives the zero-mean
compatibility condition for the normal displacement.

Here~\eqref{eqn:BGNsd}, combined with the parametric piecewise linear approximations in
space, reduces to the well-known BGN scheme for surface diffusion flow;
see~\cite[(2.6)]{BGN08parametric}. Similarly,~\eqref{eqn:BGNmf} can be understood as the
BGN approximation for mean curvature flow without spatial discretization. In both cases
the minimization determines not only the normal displacement but also an implicit
BGN-type tangential velocity.

\subsection{Formulation with constraints}
For a constrained problem, the first-order condition contains the variation of the weak
residual. Assume that $\mathcal C$ is Fr\'echet differentiable at the feasible pair
$(\vec u^*,\lambda^*)$ and that the linearized constraint operator
\[
D\mathcal C(\vec u^*,\lambda^*):
\mathcal U\times\mathcal Z\to\mathcal Y'
\]
is surjective. This constraint qualification gives a Lagrange multiplier
$\vec p\in\mathcal Y$, after identifying $(\mathcal Y')'$ with $\mathcal Y$.
The Lagrangian can be written directly as
\[
\mathscr L(\vec u,\lambda,\vec p)
=J(\vec u)+\langle\mathcal C(\vec u,\lambda),\vec p\rangle_{\mathcal Y',\mathcal Y}
=J(\vec u)+\mathcal G(\vec u,\lambda;\vec p).
\]
Since $\vec p$ is fixed when variations are taken in $(\vec u,\lambda)$, the derivatives
of $\mathcal G$ are the duality pairings with the derivatives of $\mathcal C$:
\[
D_{\vec u}\mathcal G(\vec u,\lambda;\vec p)[\vec\eta]
=\langle D_{\vec u}\mathcal C(\vec u,\lambda)[\vec\eta],
\vec p\rangle_{\mathcal Y',\mathcal Y},
\]
and similarly for the $\lambda$-variation. The Euler--Lagrange system then reads: find
$(\vec u^*,\lambda^*,\vec p)$ such that
\begin{subequations}\label{eq:weak-constrained-el}
\begin{align}
0={}&
\frac{1}{\tau}
\ipd{[\vec u^*-\vec\id]\cdot\vec\nu,~\vec\eta\cdot\vec\nu}_{X}
+ \int_{\Gamma}\nabs\vec u^*: \nabs\vec\eta\,\dH^{d-1}
\nn\\
&\quad
+ \langle D_{\vec u}\mathcal C(\vec u^*,\lambda^*)[\vec\eta],
\vec p\rangle_{\mathcal Y',\mathcal Y}
\quad\forall\,\vec\eta\in[H^1(\Gamma)]^d,
\label{eq:weak-constrained-el-u}\\[0.4em]
0={}&
\langle D_{\lambda}\mathcal C(\vec u^*,\lambda^*)[\rho],
\vec p\rangle_{\mathcal Y',\mathcal Y}
\quad\forall\,\rho\in\mathcal Z,
\label{eq:weak-constrained-el-lambda}\\[0.4em]
0={}&
\langle\mathcal C(\vec u^*,\lambda^*),\vec y\rangle_{\mathcal Y',\mathcal Y}
\quad\forall\,\vec y\in\mathcal Y.
\label{eq:weak-constrained-el-g}
\end{align}
\end{subequations}
If the weak constraint contains no auxiliary unknown $\lambda$, we omit the stationarity
condition~\eqref{eq:weak-constrained-el-lambda}. If no constraint is imposed at all, the
constraint terms are absent, and the system reduces to~\eqref{eq:firstv}.

As an example, we consider the constraint from the MDR formulation for the tangential
velocity in~\cite{Hu22evolving}.
\begin{equation}\label{eq:mdr-strong}
\lambda\vec\nu
=\Delta_s\left(\vec u-\vec\id|_{\Gamma}\right)
\quad\mbox{on }\Gamma.
\end{equation}
This is obtained from the velocity form
$\widetilde\lambda\vec\nu
=\Delta_s((\vec u-\vec\id|_{\Gamma})/\tau)$ by multiplying by $\tau$ and setting
$\lambda=\tau\widetilde\lambda$. We take
$\mathcal Z=L^2(\Gamma)$ and $\mathcal Y=[H^1(\Gamma)]^d$.
The weak form of~\eqref{eq:mdr-strong} is obtained by testing with
$\vec y\in\mathcal Y$ and integrating by parts on $\Gamma$:
\begin{equation}\label{eq:mdr-weak}
0=\int_{\Gamma}
\nabs\left(\vec u-\vec\id|_{\Gamma}\right):
\nabs\vec y\,\dH^{d-1}
+\int_{\Gamma}\lambda\,\vec y\cdot\vec\nu\,\dH^{d-1}
\quad\forall\,\vec y\in[H^1(\Gamma)]^d.
\end{equation}
Thus the corresponding residual operator
$\mathcal C:\mathcal U\times L^2(\Gamma)\to\mathcal Y'$ is defined by
\begin{equation}\label{eq:mdr-C}
\langle\mathcal C(\vec u,\lambda),\vec y\rangle_{\mathcal Y',\mathcal Y}
=\int_{\Gamma}
\nabs\left(\vec u-\vec\id|_{\Gamma}\right):
\nabs\vec y\,\dH^{d-1}
+\int_{\Gamma}\lambda\,\vec y\cdot\vec\nu\,\dH^{d-1}.
\end{equation}
%Note that the identity map is admissible, since
%$\vec u=\vec\id|_{\Gamma}$ and $\lambda=0$ make
%\eqref{eq:mdr-weak} vanish.

For use in~\eqref{eq:weak-constrained-el}, the derivatives of this residual are
\begin{align}
\langle D_{\vec u}\mathcal C(\vec u,\lambda)[\vec\eta],
\vec p\rangle_{\mathcal Y',\mathcal Y}
&=
\int_{\Gamma}\nabs\vec\eta:\nabs\vec p\,\dH^{d-1},
\label{eq:mdr-Du}\\
\langle D_{\lambda}\mathcal C(\vec u,\lambda)[\rho],
\vec p\rangle_{\mathcal Y',\mathcal Y}
&=
\int_{\Gamma}\rho\,\vec p\cdot\vec\nu\,\dH^{d-1}.
\label{eq:mdr-Dlambda}
\end{align}

Substituting these expressions into~\eqref{eq:weak-constrained-el}, the MDR version of
the constrained Euler--Lagrange system is: find
$(\vec u^*,\lambda^*,\vec p)\in[H^1(\Gamma)]^d
\times L^2(\Gamma)\times[H^1(\Gamma)]^d$ such that
\begin{subequations}\label{eqn:mdr-weakel}
\begin{align}
0={}&
\frac{1}{\tau}
\ipd{[\vec u^*-\vec\id|_{\Gamma}]\cdot\vec\nu,
\vec\eta\cdot\vec\nu}_{X}
+ \int_{\Gamma}\nabs\vec u^*: \nabs\vec\eta\,\dH^{d-1}
\nn\\
&\quad
+\int_{\Gamma}\nabs\vec\eta:\nabs\vec p\,\dH^{d-1}
\quad\forall\,\vec\eta\in[H^1(\Gamma)]^d,
\label{eq:mdr-weakel-u}\\
0={}&
\int_{\Gamma}\rho\,\vec p\cdot\vec\nu\,\dH^{d-1}
\quad\forall\,\rho\in L^2(\Gamma),
\label{eq:mdr-weakel-lambda}\\
0={}&
\int_{\Gamma}
\nabs\left(\vec u^*-\vec\id|_{\Gamma}\right):
\nabs\vec y\,\dH^{d-1}
+\int_{\Gamma}\lambda^*\,\vec y\cdot\vec\nu\,\dH^{d-1}
\quad\forall\,\vec y\in[H^1(\Gamma)]^d.
\label{eq:mdr-weakel-g}
\end{align}
\end{subequations}
The second equation states that the Lagrange multiplier $\vec p$ is tangential in the
weak sense, i.e. $\vec p\cdot\vec\nu=0$ in $L^2(\Gamma)$.

When combined with parametric piecewise linear spatial
discretizations,~\eqref{eqn:mdr-weakel} yields the recent dual-MDR scheme: the
$L^2(\Gamma)$ metric gives mean curvature flow, while the $H^{-1}(\Gamma)$
metric gives surface diffusion flow; see~\cite[(2.6),(2.7)]{Gao26dualMDR}.

\section{Parametric finite element approximations}
\label{sec:pfem}

We now introduce the fully discrete parametric finite element approximations. Let $T>0$
and $\tau = T/M$ with a fixed
$M\in\mathbb{N}_+$.  We set $t_m=m\,\tau$ for
$m=0,1,\ldots, M$ as the discrete times.

Assume we are given a closed hypersurface $\Gm\subset\bR^d$ such that
\begin{equation}
\Gm = \cup_{j=1}^J\overline{\sigma}_j^m\quad\mbox{with}\quad
\mT^m=\{\sigma_j^m\}_{j=1}^J,\quad
\mQ^m = \{\vec q_k^m\}_{k=1}^K,
\end{equation}
where $\mT^m$ is a collection of mutually disjoint $(d-1)$-simplices in $\bR^d$, and
$\mQ^m$ is the set of globally labeled vertices. Associated with $\Gm$, we define the
finite element space
\begin{equation}\label{eq:FEMspace}
\bV^h(\Gm):=\left\{\varphi\in C(\Gm):\;\varphi|_{\sigma}\;\mbox{is affine}\quad\forall\sigma\in\mT^m\right\}.
\end{equation}

Let $\{\vec q_{j_k}^m\}_{k=0}^{d-1}$ be the vertices of $\sigma_j^m$, ordered with the
same orientation for all $\sigma_j^m\in\mT^m$. We introduce the geometric unit normal
$\vec\nu^m$ to $\Gm$ elementwise by
\begin{align}\label{eq:geonormal}
\vec\nu_j^m := \vec\nu^m|_{\sigma_j^m}
:=\frac{\vec N(\sigma_j^m)}{|\vec N(\sigma_j^m)|},
\quad
\vec N(\sigma_j^m)
= (\vec q_{j_1}^m-\vec q_{j_0}^m)\wedge\cdots\wedge
(\vec q_{j_{d-1}}^m-\vec q_{j_0}^m),
\end{align}
where $\wedge$ is the wedge product and $\vec N(\sigma)$ denotes the orientation vector
of $\sigma\in\mT^m$.

With a slight abuse of notation, we write
$\ipd{\cdot,\cdot}_{L^2(\Gm)} = \ipd{\cdot,\cdot}_{\Gm}$.  We also
introduce the mass-lumped approximation over the current polyhedral surface
$\Gamma^m$ via
\begin{equation}
\ipd{u, v}_{\Gm}^h:=
\frac{1}{d}\sum_{j=1}^{J} \mH^{d-1}(\sigma^{m}_j)
\sum_{k=0}^{d-1}
\underset{\sigma^{m}_j\ni \vec{x}\to \vec{q}^{m}_{j_k}}{\lim}\,
(u\cdot v)(
\vec{x}),\label{eq:tprule}
\end{equation}
where $u,v$ are piecewise continuous, with possible jumps across the edges of
$\sigma\in\mT^m$, and
$\mH^{d-1}(\sigma^{m}_j)= \frac{1}{(d-1)!}\,|\vec N(\sigma_j^{m})|$
is the measure of $\sigma^{m}_j$.

We next present the finite element approximations of the BGN
formulation~\eqref{eqn:BGNsd} and the dual-MDR formulation~\eqref{eqn:mdr-weakel}. For
ease of presentation, we focus on surface diffusion flow with $X=H^{-1}(\Gamma)$. We use
the following common discrete minimizing-movement functional. For
$\vec v_h\in[\bV^h(\Gm)]^d$, set $\vec r_h=\vec v_h-\vec\id|_{\Gm}$ and let
$z_{\vec v_h}\in\bV^h(\Gm)$, up to a constant, solve
\begin{equation}\label{eq:discretezv}
\ipd{\nabs z_{\vec v_h},\nabs\zeta}_{\Gm}
=\frac1{\tau}\ipd{\vec r_h\cdot\vec\nu^m,\zeta}_{\Gm}^h
\quad\forall\,\zeta\in\bV^h(\Gm).
\end{equation}
This discrete Poisson problem defines the $H^{-1}(\Gm)$ potential associated with the
normal displacement $\vec r_h\cdot\vec\nu^m$. It requires the compatibility condition
$\ipd{\vec r_h\cdot\vec\nu^m,1}_{\Gm}^h=0$, which is obtained
from~\eqref{eq:discretezv} by choosing $\zeta=1\in\bV^h(\Gm)$.  With this
convention for the discrete $H^{-1}$ norm, the discrete functional is
\begin{align}
J_h^m(\vec v_h)
&:=\frac{1}{2\tau}\norm{(\vec v_h-\vec\id|_{\Gm})\cdot\vec\nu^m}_{H^{-1}(\Gm)}^2
+\frac12\ipd{\nabs\vec v_h,\nabs\vec v_h}_{\Gm}\nn \\
&=\frac{\tau}{2}\ipd{\nabs z_{\vec v_h},\nabs z_{\vec v_h}}_{\Gm}
+\frac12\ipd{\nabs\vec v_h,\nabs\vec v_h}_{\Gm}.
\label{eq:discreteJF}
\end{align}
The second equality follows by taking $\zeta=z_{\vec v_h}$ in~\eqref{eq:discretezv}.

\subsection{The BGN scheme}
\label{subsec:classical-bgn}

We first state the fully discrete BGN system associated with~\eqref{eqn:BGNsd}. Given
the initial nondegenerate polyhedral surface $\Gamma^0$, for each $m\geq 0$, find
\[
(\vec u^{m+1},z^{m+1})\in [\bV^h(\Gm)]^d\times\bV^h(\Gm)
\]
such that
\begin{subequations}\label{eq:bgn-system}
\begin{align}
0={}&
\frac1{\tau}\ipd{(\vec u^{m+1}-\vec\id)\cdot\vec\nu^m,\zeta^h}_{\Gm}^h-\ipd{\nabs z^{m+1},\nabs\zeta^h}_{\Gm}
\quad\forall\,\zeta^h\in\bV^h(\Gm),
\label{eq:bgn-z}\\[0.4em]
0={}&
\ipd{z^{m+1},\vec\eta^h\cdot\vec\nu^m}^h_{\Gm}
+\ipd{\nabs\vec u^{m+1},\nabs\vec\eta^h}_{\Gm}
\quad\forall\,\vec\eta^h\in[\bV^h(\Gm)]^d,
\label{eq:bgn-u}
\end{align}
\end{subequations}
and then set $\Gamma^{m+1}=\vec u^{m+1}(\Gm)$.

The scheme~\eqref{eq:bgn-system} coincides with the BGN scheme
in~\cite[(2.6)]{BGN08parametric}. Its unique solvability, unconditional stability, and
mesh properties were established in~\cite{BGN08parametric}.

\subsection{The dual-MDR scheme}
\label{subsec:classical-mdr}

The dual-MDR discretization is obtained by applying the finite element approximations
to~\eqref{eqn:mdr-weakel}. With the same initial discrete data, for each $m\geq0$, find
\[
(\vec u^{m+1},z^{m+1},\vec p^{m+1},\lambda^{m+1})\in
[\bV^h(\Gm)]^d\times\bV^h(\Gm)\times[\bV^h(\Gm)]^d\times\bV^h(\Gm)
\]
such that, with $\vec w^{m+1}:=\vec u^{m+1}-\vec\id|_{\Gm}$,
\begin{subequations}\label{eq:dual-mdr-system}
\begin{align}
0={}&
\frac1{\tau}\ipd{\vec w^{m+1}\cdot\vec\nu^m,\zeta^h}_{\Gm}^h
-\ipd{\nabs z^{m+1},\nabs\zeta^h}_{\Gm}
\quad\forall\,\zeta^h\in\bV^h(\Gm),
\label{eq:dual-mdr-z}\\[0.4em]
0={}&
\ipd{z^{m+1},\vec\eta^h\cdot\vec\nu^m}_{\Gm}^h
+\ipd{\nabs\vec u^{m+1},\nabs\vec\eta^h}_{\Gm}\nn\\
{}&
+\ipd{\nabs\vec\eta^h,\nabs\vec p^{m+1}}_{\Gm}
\quad\forall\,\vec\eta^h\in[\bV^h(\Gm)]^d,
\label{eq:dual-mdr-u}\\[0.4em]
0={}&
\ipd{\rho^h\,\vec\nu^m,\vec p^{m+1}}_{\Gm}^h
\quad\forall\,\rho^h\in\bV^h(\Gm),
\label{eq:dual-mdr-lambda}\\[0.4em]
0={}&
\ipd{\nabs\vec w^{m+1},\nabs\vec y^h}_{\Gm}
+\ipd{\lambda^{m+1}\vec\nu^m,\vec y^h}_{\Gm}^h
\quad\forall\,\vec y^h\in[\bV^h(\Gm)]^d,
\label{eq:dual-mdr-constraint}
\end{align}
\end{subequations}
and then set $\Gamma^{m+1}=\vec u^{m+1}(\Gm)$.

The identity map is an admissible comparison map for the discrete MDR constraint: if
$\vec u^{m+1}=\vec\id|_{\Gm}$, then $\vec w^{m+1}=\vec0$
and~\eqref{eq:dual-mdr-constraint} holds with $\lambda^{m+1}=0$. The dual-MDR
scheme~\eqref{eq:dual-mdr-system} inherits the minimizing-movement energy estimate.
Testing~\eqref{eq:dual-mdr-z} with $\zeta^h=z^{m+1}$, testing~\eqref{eq:dual-mdr-u} with
$\vec\eta^h=\vec w^{m+1}$, using~\eqref{eq:dual-mdr-constraint} with
$\vec y^h=\vec p^{m+1}$ together with~\eqref{eq:dual-mdr-lambda} with
$\rho^h=\lambda^{m+1}$, and
combining these equations gives
\begin{align}
{}&\frac12\ipd{\nabs\vec u^{m+1},\nabs\vec u^{m+1}}_{\Gm}
+\frac12\ipd{\nabs\vec w^{m+1},\nabs\vec w^{m+1}}_{\Gm}
+\tau\ipd{\nabs z^{m+1},\nabs z^{m+1}}_{\Gm}
\nn\\
&\qquad
=\frac12\ipd{\nabs\vec\id,\nabs\vec\id}_{\Gm}.
\label{eq:dual-mdr-energy}
\end{align}
Thus the dual-MDR scheme is unconditionally energy stable. Further properties
of~\eqref{eq:dual-mdr-system} can be found in the recent work~\cite{Gao26dualMDR}.

\section{New admissible formulations}
\label{sec:mm-admissible}

In this section, we present two new schemes obtained by imposing different admissible
weak constraints for the tangential velocities.

\subsection{The admissible BGN scheme}
\label{subsec:admissible-bgn}

Analogously to the MDR approach, we first consider minimizing $J(\vec u)$ subject to the
BGN constraint
\begin{equation}\label{eq:BGNcon}
\lambda\,\vec\nu = \Delta_s\vec u\qquad\mbox{on}\quad\Gamma,
\end{equation}
which is admissible because of the curvature identity
$\varkappa\,\vec\nu = \Delta_s\vec\id$.  Recalling~\eqref{eq:weak-constrained-el}, the BGN
version of the constrained Euler--Lagrange system for surface diffusion flow is as
follows: find
$(\vec u^*, z^*, \lambda^*,\vec p)\in[H^1(\Gamma)]^d\times H^1(\Gamma)
\times L^2(\Gamma)\times[H^1(\Gamma)]^d$ such that
\begin{subequations}\label{eqn:bgn-weakel}
\begin{align}
0={}&
\frac{1}{\tau}
\int_{\Gamma}[\vec u^*-\vec\id|_{\Gamma}]\cdot\vec\nu\,\varphi\,\dH^{d-1}
-\int_{\Gamma}\nabs z^*\cdot\nabs\varphi\,\dH^{d-1}
\quad\forall\varphi\in H^1(\Gamma),
\label{eq:bgn-weakel-z}\\
0={}&
\int_{\Gamma}z^*\,\vec\nu\cdot\vec\eta\,\dH^{d-1}
+ \int_{\Gamma}\nabs\vec u^*:\nabs\vec\eta\,\dH^{d-1}\nn\\
{}&+\int_{\Gamma}\nabs\vec\eta:\nabs\vec p\,\dH^{d-1}
\quad\forall\,\vec\eta\in[H^1(\Gamma)]^d,
\label{eq:bgn-weakel-u}\\
0={}&
\int_{\Gamma}\rho\,\vec p\cdot\vec\nu\,\dH^{d-1}
\quad\forall\,\rho\in L^2(\Gamma),
\label{eq:bgn-weakel-lambda}\\
0={}&
\int_{\Gamma}
\nabs\vec u^*:
\nabs\vec y\,\dH^{d-1}
+\int_{\Gamma}\lambda^*\,\vec y\cdot\vec\nu\,\dH^{d-1}
\quad\forall\,\vec y\in[H^1(\Gamma)]^d.
\label{eq:bgn-weakel-g}
\end{align}
\end{subequations}
A natural parametric finite element approximation then leads to a BGN-type scheme whose
discrete counterpart of~\eqref{eq:bgn-weakel-g} is
\begin{equation}\label{eq:bgn-uv}
0=
\ipd{\lambda^{m+1},\vec y^h\cdot\vec\nu^m}^h_{\Gm}
+\ipd{\nabs\vec u^{m+1},\nabs\vec y^h}_{\Gm}
\quad\forall\,\vec y^h\in[\bV^h(\Gm)]^d.
\end{equation}

However, this direct discretization does not, in general, preserve the unconditional
stability estimate. The obstruction arises at the discrete level: in
general,~\eqref{eq:bgn-uv} does not hold for
$\vec u^{m+1}=\vec\id|_{\Gm}$, and hence the discrete constraint is not
admissible. This observation is consistent with the fact that, even for zero normal
displacement, i.e., $\vec u-\vec\id=\vec0$, the BGN scheme can still induce an implicit
tangential velocity.

We next modify the discrete constraint~\eqref{eq:bgn-uv} to make it admissible. For
later use, following~\cite{BGN08parametric}, we introduce the vertex normal vector
$\vec\nu_{h,p}^{m}\in [\bV^h(\Gm)]^d$ as the mass-lumped $L^2$--projection of
$\vec\nu^m$ onto
$[\bV^h(\Gm)]^d$, i.e.,
\begin{equation} \label{eq:nuhomegah}
\ipd{\vec\nu_{h,p}^m, \vec\chi^h}_{\Gm}^h
= \ipd{\vec\nu^m,~\vec\chi^h}_{\Gm}^h
= \ipd{\vec\nu^m,~\vec\chi^h}_{\Gm}
\quad\forall\vec\chi^h\in [\bV^h(\Gm)]^d,
\end{equation}
where we have used~\eqref{eq:tprule}. It follows that
\begin{equation}\label{eq:vpiden}
\ipd{\chi\,\vec\nu_{h,p}^m,~\vec\eta^h}_{\Gm}^h
= \ipd{\chi\,\vec\nu^m,~\vec\eta^h}_{\Gm}^h
\qquad\forall\chi\in \bV^h(\Gm),\quad\vec\eta^h\in [\bV^h(\Gm)]^d.
\end{equation}

At each time step, we first compute the mass-lumped vector curvature
$\vec\varkappa_h^m\in[\bV^h(\Gm)]^d$ from
\begin{align}
\ipd{\vec\varkappa_h^m,\vec\eta^h}_{\Gm}^h
+\ipd{\nabs\vec\id,\nabs\vec\eta^h}_{\Gm}=0
\quad\forall\,\vec\eta^h\in[\bV^h(\Gm)]^d.
\label{eq:adm-bgn-vector-curvature}
\end{align}
For each vertex $\vec q\in\mQ^m$, define
\begin{align}
\vec\nu_{h,{\rm rd}}^m(\vec q)=
\left\{
\begin{array}{ll}
\dfrac{\vec\varkappa_h^m(\vec q)}
{|\vec\varkappa_h^m(\vec q)|},
&\quad\mbox{if }|\vec\varkappa_h^m(\vec q)|\neq0,\\[1.0em]
\vec\nu_{h,p}^m(\vec q),
&\quad\mbox{otherwise.}
\end{array}
\right.
\label{eq:adm-bgn-rd-normal}
\end{align}
We set $\varkappa_{h,{\rm rd}}^m(\vec q) =|\vec\varkappa_h^m(\vec q)|$ at vertices of
nonzero vector-curvature and
$\varkappa_{h,{\rm rd}}^m(\vec q)=0$ otherwise.  Then
$\varkappa_{h,{\rm rd}}^m\vec\nu_{h,{\rm rd}}^m=\vec\varkappa_h^m$ in the
mass-lumped nodal sense, and hence
\begin{align}
0=\ipd{\nabs\vec\id,\nabs\vec\eta^h}_{\Gm}
+\ipd{\varkappa_{h,{\rm rd}}^m\vec\nu_{h,{\rm rd}}^m,
\vec\eta^h}_{\Gm}^h
\quad\forall\,\vec\eta^h\in[\bV^h(\Gm)]^d.
\label{eq:bgn-id-admissible}
\end{align}
This motivates the following admissible modification of the BGN
constraint~\eqref{eq:bgn-uv}:
\begin{align}
0=\ipd{\nabs\vec u^{m+1},\nabs\vec\eta^h}_{\Gm}
+\ipd{\lambda^{m+1}\vec\nu_{h,{\rm rd}}^m,
\vec\eta^h}_{\Gm}^h
\quad\forall\,\vec\eta^h\in[\bV^h(\Gm)]^d,
\label{eq:newbgn-admi}
\end{align}
so that it is admissible.

Given the initial nondegenerate polyhedral surface $\Gamma^0$, for each
$m\geq0$ the admissible BGN scheme seeks
\[
(\vec u^{m+1},z^{m+1},\vec p^{m+1},\lambda^{m+1})\in
[\bV^h(\Gm)]^d\times\bV^h(\Gm)\times[\bV^h(\Gm)]^d\times\bV^h(\Gm)
\]
such that
\begin{subequations}\label{eq:adm-bgn-system}
\begin{align}
0={}&
\frac1{\tau}\ipd{[\vec u^{m+1}-\vec\id]\cdot\vec\nu_{h,{\rm rd}}^m,\zeta^h}_{\Gm}^h
-\ipd{\nabs z^{m+1},\nabs\zeta^h}_{\Gm}
\quad\forall\,\zeta^h\in\bV^h(\Gm),
\label{eq:adm-bgn-z}\\[0.4em]
0={}&
\ipd{z^{m+1},\vec\eta^h\cdot\vec\nu_{h,{\rm rd}}^m}_{\Gm}^h
+\ipd{\nabs\vec u^{m+1},\nabs\vec\eta^h}_{\Gm}\nn\\
{}&
+\ipd{\nabs\vec p^{m+1},\nabs\vec\eta^h}_{\Gm}
\quad\forall\,\vec\eta^h\in[\bV^h(\Gm)]^d,
\label{eq:adm-bgn-u}\\[0.4em]
0={}&
\ipd{\mu^h\vec\nu_{h,{\rm rd}}^m,\vec p^{m+1}}_{\Gm}^h
\quad\forall\,\mu^h\in\bV^h(\Gm),
\label{eq:adm-bgn-lambda}\\[0.4em]
0={}&
\ipd{\nabs\vec u^{m+1},\nabs\vec y^h}_{\Gm}
+\ipd{\lambda^{m+1}\vec\nu_{h,{\rm rd}}^m,\vec y^h}_{\Gm}^h
\quad\forall\,\vec y^h\in[\bV^h(\Gm)]^d,
\label{eq:adm-bgn-p}
\end{align}
\end{subequations}
and then set $\Gamma^{m+1}=\vec u^{m+1}(\Gm)$.

The scheme~\eqref{eq:adm-bgn-system} induces a tangential velocity different from that
of the classical BGN scheme~\eqref{eq:bgn-system}. The normal
$\vec\nu_{h,{\rm rd}}^m$ is also used in~\eqref{eq:adm-bgn-lambda} to obtain the
stability estimate, and in~\eqref{eq:adm-bgn-z}--\eqref{eq:adm-bgn-u} to obtain unique
solvability. The following theorem states the result.
\begin{thm}[existence and uniqueness]
Assume that
\begin{enumerate}[label=$(\mathbf{A\arabic*})$, ref=$\mathbf{A\arabic*}$]
\item \label{assumpI} $\mH^{d-1}(\sigma) > 0$ for all $\sigma \in \mT^m$;
\item \label{assupII} $\dim\operatorname{span}\left(
\bigl\{\vec\nu_{h,{\rm rd}}^{m}(\vec q)\bigr\}_{\vec q\in \mQ^m}\right)=d$;
\item \label{assupIII} $\vec\nu_{h,{\rm rd}}^{m}(\vec q)\neq \vec 0$ for all $\vec q\in\mQ^m$.
\end{enumerate}
Then the scheme~\eqref{eq:adm-bgn-system} admits a unique solution
\[
(\vec u^{m+1},z^{m+1},\vec p^{m+1},\lambda^{m+1})\in
[\bV^h(\Gm)]^d\times\bV^h(\Gm)\times[\bV^h(\Gm)]^d\times\bV^h(\Gm).
\]
\end{thm}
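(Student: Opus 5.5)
The scheme~\eqref{eq:adm-bgn-system} is a square linear system for the nodal values of $(\vec u^{m+1},z^{m+1},\vec p^{m+1},\lambda^{m+1})$: the test spaces in \eqref{eq:adm-bgn-z}--\eqref{eq:adm-bgn-p} have the same dimensions as the corresponding unknown spaces. It therefore suffices to show that the homogeneous system has only the trivial solution. This is the system obtained by replacing $\vec\id$ in \eqref{eq:adm-bgn-z} by $\vec 0$ (note $\vec\id$ enters only there). For \eqref{eq:adm-bgn-p} nothing changes, since it contains no data. Denote a solution of the homogeneous system by $(\vec u,z,\vec p,\lambda)$.

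The first step mimics the energy argument used for the dual-MDR scheme in \eqref{eq:dual-mdr-energy}. We test the homogeneous \eqref{eq:adm-bgn-z} with $\zeta^h=z$ and \eqref{eq:adm-bgn-u} with $\vec\eta^h=\vec u$. We test \eqref{eq:adm-bgn-p} with $\vec y^h=\vec p$ and \eqref{eq:adm-bgn-lambda} with $\mu^h=\lambda$, which together give $\ipd{\nabs\vec u,\nabs\vec p}_{\Gm}=0$. Combining these yields
\[
\tau\ipd{\nabs z,\nabs z}_{\Gm}+\ipd{\nabs\vec u,\nabs\vec u}_{\Gm}=0 .
\]
Since $\Gm$ is connected as a closed polyhedral surface, $z\equiv z_c\in\bR$ and $\vec u\equiv\vec u_c\in\bR^d$ are constants.

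Next we exploit the nodal structure of the mass-lumped inner product~\eqref{eq:tprule}. By \eqref{assumpI}, every vertex $\vec q\in\mQ^m$ carries a strictly positive lumped weight. Hence an identity $\ipd{f,\chi}^h_{\Gm}=0$ for all $\chi\in\bV^h(\Gm)$, with $f$ having well-defined nodal values, forces $f(\vec q)=0$ at every vertex.
\begin{enumerate}[label=(\roman*)]
\item Applying this to \eqref{eq:adm-bgn-z}, where the gradient term now vanishes, gives $\vec u_c\cdot\vec\nu_{h,{\rm rd}}^m(\vec q)=0$ for all $\vec q$. By \eqref{assupII} this yields $\vec u_c=\vec0$.
\item Then \eqref{eq:adm-bgn-p} reduces to $\ipd{\lambda\vec\nu_{h,{\rm rd}}^m,\vec y^h}^h_{\Gm}=0$ for all $\vec y^h$. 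Hence $\lambda(\vec q)\vec\nu_{h,{\rm rd}}^m(\vec q)=\vec0$, and \eqref{assupIII} gives $\lambda=0$. Assumption \eqref{assupIII} is needed because at vertices with $\vec\varkappa_h^m(\vec q)=\vec0$ the normal falls back to $\vec\nu_{h,p}^m(\vec q)$, which could vanish.
\item With $\vec u=\vec0$, test \eqref{eq:adm-bgn-u} with $\vec\eta^h=\vec p$. The coupling term is $z_c\ipd{1,\vec p\cdot\vec\nu_{h,{\rm rd}}^m}^h_{\Gm}$, which vanishes by \eqref{eq:adm-bgn-lambda} with $\mu^h=z_c$. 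So $\nabs\vec p=0$, and $\vec p\equiv\vec p_c$ is constant.
\item Now \eqref{eq:adm-bgn-lambda} gives $\vec p_c\cdot\vec\nu_{h,{\rm rd}}^m(\vec q)=0$ for all $\vec q$, hence $\vec p_c=\vec0$ by \eqref{assupII}.
\item Finally, \eqref{eq:adm-bgn-u} reduces to $z_c\ipd{\vec\nu_{h,{\rm rd}}^m,\vec\eta^h}^h_{\Gm}=0$ for all $\vec\eta^h$. Since $\vec\nu_{h,{\rm rd}}^m\neq\vec0$ at some (indeed every) vertex, $z_c=0$.
\end{enumerate}

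The main point requiring care is the last one. Unlike the Poisson problem \eqref{eq:discretezv}, here $z^{m+1}$ is not merely determined up to a constant, because it is coupled to $\vec u^{m+1}$ through \eqref{eq:adm-bgn-u}. One must check that the constant mode of $z$ is killed, via \eqref{eq:adm-bgn-u} together with \eqref{assupIII}, after $\vec p$ has been shown to vanish. The ordering above (first $\vec u$, then $\lambda$, then $\vec p$, then $z$) is chosen so that each step uses only already-established facts. The rest is routine bookkeeping with the nodal quadrature and the positivity of the lumped weights guaranteed by \eqref{assumpI}.
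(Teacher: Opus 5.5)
Your proof is correct and follows essentially the same route as the paper. Both arguments use the same energy-type testing to obtain $\tau\ipd{\nabs z,\nabs z}_{\Gm}+\ipd{\nabs\vec u,\nabs\vec u}_{\Gm}=0$, then nodal mass-lumping arguments with (A2) to eliminate the constants $\vec u^c,\vec p^c$ and (A3) to eliminate $z^c$ and $\lambda$. The only difference is the ordering, which is immaterial: you eliminate $\lambda$ right after $\vec u$, whereas the paper first shows $\nabs\vec p=0$ and handles $z^c$ and $\lambda$ together at the end.
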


\begin{proof}
Since~\eqref{eq:adm-bgn-system} is a square finite-dimensional linear system, it
suffices to show that the corresponding homogeneous system has only the trivial
solution. Consider the homogeneous problem: find
\[
(\vec u^h,z^h,\vec p^h,\lambda^h)\in
[\bV^h(\Gm)]^d\times\bV^h(\Gm)\times[\bV^h(\Gm)]^d\times\bV^h(\Gm)
\]
such that, for all
$(\zeta^h, \vec\eta^h, \mu^h, \vec y^h)\in
\bV^h(\Gm)\times[\bV^h(\Gm)]^d\times\bV^h(\Gm)
\times[\bV^h(\Gm)]^d$,
\begin{subequations}\label{eq:hobgn-system}
\begin{align}
0={}&
\frac1{\tau}\ipd{\vec u^h\cdot\vec\nu_{h,{\rm rd}}^m,\zeta^h}_{\Gm}^h
-\ipd{\nabs z^h,\nabs\zeta^h}_{\Gm},
\label{eq:hobgn-z}\\
0={}&
\ipd{z^h,\vec\eta^h\cdot\vec\nu_{h,{\rm rd}}^m}_{\Gm}^h
+\ipd{\nabs\vec u^h,\nabs\vec\eta^h}_{\Gm}+\ipd{\nabs\vec p^h,\nabs\vec\eta^h}_{\Gm},
\label{eq:hobgn-u}\\
0={}&
\ipd{\mu^h\vec\nu_{h,{\rm rd}}^m,\vec p^h}_{\Gm}^h,
\label{eq:hobgn-lambda}\\
0={}&
\ipd{\nabs\vec u^h,\nabs\vec y^h}_{\Gm}
+\ipd{\lambda^h\vec\nu_{h,{\rm rd}}^m,\vec y^h}_{\Gm}^h.
\label{eq:hobgn-p}
\end{align}
\end{subequations}
We test these equations with
$\zeta^h = z^h$ in~\eqref{eq:hobgn-z},
$\vec\eta^h=\vec u^h$ in~\eqref{eq:hobgn-u},
$\mu^h =\lambda^h$ in~\eqref{eq:hobgn-lambda}, and
$\vec y^h = \vec p^h$ in~\eqref{eq:hobgn-p}, respectively.  Multiplying the
tested~\eqref{eq:hobgn-z} by $\tau$ and subtracting the tested~\eqref{eq:hobgn-p} from
the tested~\eqref{eq:hobgn-u}, while using the tested~\eqref{eq:hobgn-lambda}, gives
\begin{equation*}
\tau\ipd{\nabs z^h,~\nabs z^h}_{\Gm}
+ \ipd{\nabs\vec u^h,~\nabs\vec u^h}_{\Gm}=0,
\end{equation*}
which, by assumption~\ref{assumpI}, implies that $z^h = z^c$ and $\vec u^h = \vec u^c$
are both constants. Next, we choose $\vec\eta^h = \vec p^h$ in~\eqref{eq:hobgn-u} and
$\mu^h = z^h$ in~\eqref{eq:hobgn-lambda} to obtain
\begin{equation*}
0=\ipd{\nabs\vec p^h,~\nabs\vec p^h}_{\Gm},
\end{equation*}
which, by assumption~\ref{assumpI}, implies that
$\vec p^h = \vec p^c$ is also a constant vector.

Inserting $z^h = z^c$ and $\vec u^h = \vec u^c$ into~\eqref{eq:hobgn-z} gives
\begin{equation*}
0=\ipd{\vec u^c\cdot\vec\nu_{h,{\rm rd}}^m,~\zeta^h}^h_{\Gm},
\end{equation*}
and hence
\[
\vec u^c\cdot\vec\nu_{h,{\rm rd}}^m(\vec q)=0
\quad\forall\,\vec q\in\mQ^m.
\]
Assumption~\ref{assupII} then implies
$\vec u^h=\vec u^c=\vec 0$.  Applying the same argument
to~\eqref{eq:hobgn-lambda} with $\vec p^h=\vec p^c$, and again using
assumption~\ref{assupII}, yields $\vec p^h=\vec 0$.

Inserting these identities into~\eqref{eq:hobgn-u} and~\eqref{eq:hobgn-p} reduces the
remaining equations to
\begin{subequations}
\begin{align}\label{eq:zceta}
\ipd{z^c,~\vec\eta^h\cdot\vec\nu_{h,{\rm rd}}^m}_{\Gm}^h&=0
\quad\forall\,\vec\eta^h\in [\bV^h(\Gm)]^d,\\
\ipd{\lambda^h,~\vec y^h\cdot\vec\nu_{h,{\rm rd}}^m}_{\Gm}^h&=0
\quad\forall\,\vec y^h\in [\bV^h(\Gm)]^d.
\end{align}
\end{subequations}
Choose $\vec\eta^h$ with nodal values
$\vec\eta^h(\vec q)=z^c\vec\nu_{h,{\rm rd}}^m(\vec q)$ for
all $\vec q\in\mQ^m$. The first equation in~\eqref{eq:zceta} and the mass-lumped
quadrature rule give
\begin{equation*}
0= \ipd{z^c,~\vec\eta^h\cdot\vec\nu_{h,{\rm rd}}^m}_{\Gm}^h
= \frac{1}{d}\sum_{j=1}^J\mH^{d-1}(\sigma_j^m)
\sum_{k=0}^{d-1}
\bigl|z^c\vec\nu_{h,{\rm rd}}^m(\vec q_{j_k}^m)\bigr|^2.
\end{equation*}
Assumptions~\ref{assumpI} and~\ref{assupIII} imply $z^c=0$. Similarly, choosing
$\vec y^h$ with nodal values
$\vec y^h(\vec q)=\lambda^h(\vec q)\vec\nu_{h,{\rm rd}}^m(\vec q)$ gives
$\lambda^h=0$.

Thus the homogeneous system~\eqref{eq:hobgn-system} has only the trivial solution.
Therefore, the scheme~\eqref{eq:adm-bgn-system} is uniquely solvable.
\end{proof}

The scheme~\eqref{eq:adm-bgn-system} has the same unconditional energy identity as the
minimizing movement, as stated in the following theorem.
\begin{thm}[unconditional stability] Let
\[
(\vec u^{m+1},z^{m+1},\vec p^{m+1},\lambda^{m+1})\in
[\bV^h(\Gm)]^d\times\bV^h(\Gm)\times[\bV^h(\Gm)]^d\times\bV^h(\Gm)
\]
be a solution to~\eqref{eq:adm-bgn-system}. Then
\begin{equation}\label{eq:adm-bgn-energy}
|\Gamma^{m+1}| + \tau\ipd{\nabs z^{m+1}, \nabs z^{m+1}}_{\Gm}\leq |\Gm|\qquad 0\leq m\leq M-1.
\end{equation}
\end{thm}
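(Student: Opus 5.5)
The plan mimics the dual-MDR energy argument of \S\ref{subsec:classical-mdr}, but now the admissibility comes from \eqref{eq:bgn-id-admissible}. Write $\vec w^{m+1}=\vec u^{m+1}-\vec\id|_{\Gm}$.

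\textbf{Step 1: testing.} We use four test functions:
\begin{itemize}
\item $\zeta^h=z^{m+1}$ in \eqref{eq:adm-bgn-z}, multiplied by $\tau$, which gives $\ipd{\vec w^{m+1}\cdot\vec\nu_{h,{\rm rd}}^m,z^{m+1}}^h_{\Gm}=\tau\ipd{\nabs z^{m+1},\nabs z^{m+1}}_{\Gm}$;
\item $\vec\eta^h=\vec w^{m+1}$ in \eqref{eq:adm-bgn-u};
\item $\vec y^h=\vec p^{m+1}$ in \eqref{eq:adm-bgn-p}, whose $\lambda$-term vanishes by \eqref{eq:adm-bgn-lambda} with $\mu^h=\lambda^{m+1}$;
\item $\vec y^h=\vec p^{m+1}$ in \eqref{eq:bgn-id-admissible}, whose curvature term vanishes by \eqref{eq:adm-bgn-lambda} with $\mu^h=\varkappa_{h,{\rm rd}}^m$.
\end{itemize}
This is the only place where the choice of $\vec\nu_{h,{\rm rd}}^m$ in the multiplier equation matters: $\varkappa_{h,{\rm rd}}^m$ lies in $\bV^h(\Gm)$ as a nodal function, so it is a legal test function.

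\textbf{Step 2: eliminating $\vec p^{m+1}$.} Subtracting the last two tested identities gives $\ipd{\nabs\vec w^{m+1},\nabs\vec p^{m+1}}_{\Gm}=0$. Inserting this and the tested \eqref{eq:adm-bgn-z} into the tested \eqref{eq:adm-bgn-u} yields
\[
\tau\ipd{\nabs z^{m+1},\nabs z^{m+1}}_{\Gm}+\ipd{\nabs\vec u^{m+1},\nabs(\vec u^{m+1}-\vec\id)}_{\Gm}=0 .
\]
The elementary identity $2a\cdot(a-b)=|a|^2-|b|^2+|a-b|^2$ then turns this into
\[
\tfrac12\ipd{\nabs\vec u^{m+1},\nabs\vec u^{m+1}}_{\Gm}+\tfrac12\ipd{\nabs\vec w^{m+1},\nabs\vec w^{m+1}}_{\Gm}+\tau\ipd{\nabs z^{m+1},\nabs z^{m+1}}_{\Gm}=\tfrac12\ipd{\nabs\vec\id,\nabs\vec\id}_{\Gm},
\]
which is the exact analogue of \eqref{eq:dual-mdr-energy}. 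Dropping the $\vec w$ term gives the minimizing-movement inequality.

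\textbf{Step 3: passing to areas.} It remains to show two facts:
\begin{itemize}
\item $\tfrac12\ipd{\nabs\vec\id,\nabs\vec\id}_{\Gm}=\tfrac{d-1}{2}|\Gm|$, since $|\nabs\vec\id|^2=d-1$ elementwise;
\item $|\Gamma^{m+1}|\le\tfrac{1}{d-1}\cdot\tfrac{d-1}{2}\ipd{\nabs\vec u^{m+1},\nabs\vec u^{m+1}}_{\Gm}$.
\end{itemize}
The second is the standard BGN estimate (cf.~\cite{BGN08parametric}). On each simplex $\sigma$, the tangential gradient of $\vec u^{m+1}$ has singular values $s_1,\dots,s_{d-1}$. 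Then $\mH^{d-1}(\vec u^{m+1}(\sigma))=\prod_i s_i\,\mH^{d-1}(\sigma)$, while $|\nabs\vec u^{m+1}|^2=\sum_i s_i^2$. By AM--GM, $\prod_i s_i\le\frac{1}{d-1}\sum_i s_i^2\cdot$ (for $d=3$: $s_1s_2\le\tfrac12(s_1^2+s_2^2)$; for $d=2$ use $s_1\le\tfrac12(1+s_1^2)$ together with the constant term $\tfrac{3-d}{2}|\Gm|$).

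Combining the $d=2$ and $d=3$ cases via the Dirichlet-energy approximation, written as $|\vec u(\Gm)|\le\frac{3-d}{2}|\Gm|+\frac12\ipd{\nabs\vec u,\nabs\vec u}_{\Gm}$, and noting that the right-hand side at $\vec u=\vec\id$ equals $|\Gm|$, we obtain \eqref{eq:adm-bgn-energy}.

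\textbf{Main obstacle.} I expect Step 3 to be the main obstacle: it requires the careful dimension-dependent elementwise area inequality, checking that the constants in both $d=2$ and $d=3$ reduce exactly to $|\Gm|$ on the right. Step 1 is where admissibility enters, and it only needs the observation that $\varkappa_{h,{\rm rd}}^m\in\bV^h(\Gm)$.
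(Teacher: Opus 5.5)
Your proof is correct and essentially the paper's: you use the same four tests, and the key cancellation $\ipd{\nabs\vec p^{m+1},\nabs\vec\id}_{\Gm}=0$ comes from \eqref{eq:bgn-id-admissible} and \eqref{eq:adm-bgn-lambda} with $\mu^h=\varkappa_{h,{\rm rd}}^m$; the only difference is that you prove by hand (polarization identity plus elementwise AM--GM on singular values) the inequality $\ipd{\nabs\vec u^{m+1},\nabs[\vec u^{m+1}-\vec\id]}_{\Gm}\geq|\Gamma^{m+1}|-|\Gm|$, which the paper simply cites from \cite[Lemma 57]{Barrett20}. One small slip: your second bullet in Step 3, $|\Gamma^{m+1}|\le\frac12\ipd{\nabs\vec u^{m+1},\nabs\vec u^{m+1}}_{\Gm}$, and the displayed bound $\prod_i s_i\le\frac{1}{d-1}\sum_i s_i^2$ hold only for $d=3$ (for $d=2$ the bullet already fails at $\vec u^{m+1}=\vec\id$), so you should state only the corrected form $|\vec u(\Gm)|\le\frac{3-d}{2}|\Gm|+\frac12\ipd{\nabs\vec u,\nabs\vec u}_{\Gm}$, which is what your argument actually uses.
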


\begin{proof}
We test~\eqref{eq:adm-bgn-z} with
$\zeta^h=z^{m+1}$,~\eqref{eq:adm-bgn-u} with
$\vec\eta^h=\vec u^{m+1}-\vec\id$ and use~\eqref{eq:adm-bgn-p} with
$\vec y^h=\vec p^{m+1}$ together with~\eqref{eq:adm-bgn-lambda} with
$\mu^h=\lambda^{m+1}$.  Combining these equations gives
\begin{align}
\tau\ipd{\nabs z^{m+1},\nabs z^{m+1}}_{\Gm}
+\ipd{\nabs\vec u^{m+1},\nabs[\vec u^{m+1}-\vec\id]}_{\Gm}=\ipd{\nabs\vec p^{m+1},\nabs\vec\id}_{\Gm}.
\end{align}
We further test $\vec\eta^h=\vec p^{m+1}$ in the identity~\eqref{eq:bgn-id-admissible}
and
$\mu^h=\varkappa_{h,{\rm rd}}^m$ in~\eqref{eq:adm-bgn-lambda} to obtain that
\[\ipd{\nabs\vec p^{m+1},\nabs\vec\id}_{\Gm}=0.\]

Now recall the inequality (see~\cite[Lemma 57]{Barrett20})
\begin{equation}\label{eq:surfaceinq}
\ipd{\nabs\vec u^{m+1},\nabs[\vec u^{m+1}-\vec\id]}_{\Gm}\geq |\Gamma^{m+1}| -|\Gm|,
\end{equation}
which gives the desired stability result~\eqref{eq:adm-bgn-energy}.
\end{proof}

\begin{rem}
For the admissible BGN scheme~\eqref{eq:adm-bgn-system}, one may also consider the
following reduced two-unknown variant: find
\[
(\vec u^{m+1},z^{m+1})\in [\bV^h(\Gm)]^d\times\bV^h(\Gm)
\]
such that
\begin{subequations}\label{eq:sadm-bgn-system}
\begin{align}
0={}&
\frac1{\tau}\ipd{[\vec u^{m+1}-\vec\id]\cdot\vec\nu_{h,{\rm rd}}^m,\zeta^h}_{\Gm}^h
-\ipd{\nabs z^{m+1},\nabs\zeta^h}_{\Gm}
\quad\forall\,\zeta^h\in\bV^h(\Gm),
\label{eq:sadm-bgn-z}\\
0={}&
\ipd{z^{m+1},\vec\eta^h\cdot\vec\nu_{h,{\rm rd}}^m}_{\Gm}^h
+\ipd{\nabs\vec u^{m+1},\nabs\vec\eta^h}_{\Gm}\quad\forall\,\vec\eta^h\in[\bV^h(\Gm)]^d.
\label{eq:sadm-bgn-u}
\end{align}
\end{subequations}
Compared with the classical BGN scheme~\eqref{eq:bgn-system}, the only change
in~\eqref{eq:sadm-bgn-system} is that the modified vertex normal
$\vec\nu_{h,{\rm rd}}^m$ is used in place of the element normal
$\vec\nu^m$.  The same arguments give unique solvability under
assumptions~\ref{assumpI}--\ref{assupIII}, as well as unconditional energy stability.

Nevertheless, the full mixed formulation~\eqref{eq:adm-bgn-system}, with the auxiliary
variables $\vec p^{m+1}$ and $\lambda^{m+1}$, has the advantage of separating the metric
equations in~\eqref{eq:adm-bgn-z}-\eqref{eq:adm-bgn-u} that determine the normal
velocity and the discrete BGN constraint that determines the tangential motion. This
separation is useful, for instance, in volume-preserving approximations, where the
metric normal may be replaced by a time-weighted interface normal,
see~\S\ref{sec:further-insights}. A direct replacement in the reduced
formulation~\eqref{eq:sadm-bgn-system} would mix this normal choice with the induced
BGN-type tangential motion and may no longer preserve the admissibility. \end{rem}

\subsection{The relaxed MDR scheme}
\label{subsec:relaxed-mdr}

For the MDR constraint~\eqref{eq:mdr-strong}, we introduce the relaxed form
\begin{equation*}
\frac{1}{\alpha}P_{\Gamma}
\left(\frac{\vec u-\vec\id|_{\Gamma}}{\tau}\right)+\lambda\,\vec\nu
=\Delta_s\left(\frac{\vec u-\vec\id|_{\Gamma}}{\tau}\right)
\quad\mbox{on}\quad\Gamma,
\end{equation*}
or, after multiplying by $\tau$ and relabeling $\tau\lambda$ as the auxiliary scalar
$\lambda$,
\begin{equation}\label{eq:relaxmdr-strong}
\frac{1}{\alpha}P_{\Gamma}\left(\vec u-\vec\id|_{\Gamma}\right)
+\lambda\,\vec\nu
=\Delta_s\left(\vec u-\vec\id|_{\Gamma}\right)
\quad\mbox{on}\quad\Gamma,
\end{equation}
where $P_\Gamma=\Id-\vec\nu\otimes\vec\nu$ is the orthogonal projection onto the tangent
space of $\Gamma$, and $\alpha>0$. The constraint~\eqref{eq:relaxmdr-strong} is
admissible, since
$\vec u=\vec\id|_\Gamma$ satisfies it with $\lambda=0$.  Letting
$\alpha\to+\infty$ formally recovers the MDR constraint~\eqref{eq:mdr-strong}.

The weak form of~\eqref{eq:relaxmdr-strong} is
\begin{align}
0=\ipd{\nabs(\vec u-\vec\id|_\Gamma),\nabs\vec y}_{\Gamma}
+\frac1\alpha
\ipd{P_\Gamma(\vec u-\vec\id|_\Gamma),\vec y}_{\Gamma}
+\ipd{\lambda\vec\nu,\vec y}_{\Gamma}
\quad\forall\,\vec y\in[H^1(\Gamma)]^d.
\label{eq:rmdr-weak-constraint}
\end{align}
Recalling~\eqref{eq:weak-constrained-el}, the relaxed MDR Euler--Lagrange system for
surface diffusion flow is as follows: find
$(\vec u^*, z^*, \lambda^*,\vec p)\in[H^1(\Gamma)]^d\times H^1(\Gamma)
\times L^2(\Gamma)\times[H^1(\Gamma)]^d$ such that
\begin{subequations}\label{eqn:rmdr-weakel}
\begin{align}
0={}&
\frac{1}{\tau}
\ipd{[\vec u^*-\vec\id|_{\Gamma}]\cdot\vec\nu,~\varphi}_{\Gamma}
-\ipd{\nabs z^*,~\nabs\varphi}_{\Gamma}
\quad\forall\,\varphi\in H^1(\Gamma),
\label{eq:rmdr-weakel-z}\\
0={}&
\ipd{z^*,~\vec\eta\cdot\vec\nu}_{\Gamma}
+\ipd{\nabs\vec u^*,~\nabs\vec\eta}_{\Gamma}
+\ipd{\nabs\vec p,~\nabs\vec\eta}_{\Gamma}\nn\\
{}&
+\frac1\alpha\ipd{P_\Gamma\vec\eta,~\vec p}_{\Gamma}
\quad\forall\,\vec\eta\in[H^1(\Gamma)]^d,
\label{eq:rmdr-weakel-u}\\
0={}&
\ipd{\rho\vec\nu,~\vec p}_{\Gamma}
\quad\forall\,\rho\in L^2(\Gamma),
\label{eq:rmdr-weakel-lambda}\\
0={}&
\ipd{\nabs(\vec u^*-\vec\id|_\Gamma),~\nabs\vec y}_{\Gamma}
+\ipd{\lambda^*\vec\nu,~\vec y}_{\Gamma}\nn\\
{}&+\frac1\alpha\ipd{P_\Gamma(\vec u^*-\vec\id|_\Gamma),~\vec y}_{\Gamma}
\quad\forall\,\vec y\in[H^1(\Gamma)]^d.
\label{eq:rmdr-weakel-g}
\end{align}
\end{subequations}

For the spatial discretization, we use the vertex normal
$\vec\nu_{h,p}^m$ defined in~\eqref{eq:nuhomegah}.  At every vertex
$\vec q\in\mQ^m$ with $\vec\nu_{h,p}^m(\vec q)\neq\vec0$, set
\[
\widehat{\vec\nu}_{h,p}^m(\vec q)
=\frac{\vec\nu_{h,p}^m(\vec q)} {|\vec\nu_{h,p}^m(\vec q)|},
\qquad
P_{h,p}^m(\vec q) =I-\widehat{\vec\nu}_{h,p}^m(\vec q)
\otimes\widehat{\vec\nu}_{h,p}^m(\vec q),
\]
where we assume that this normalization is well-defined at all vertices. Given the
initial nondegenerate polyhedral surface $\Gamma^0$, for each $m\geq0$ the relaxed MDR
scheme seeks
\[
(\vec u^{m+1},z^{m+1},\vec p^{m+1},\lambda^{m+1})\in
[\bV^h(\Gm)]^d\times\bV^h(\Gm)\times[\bV^h(\Gm)]^d\times\bV^h(\Gm)
\]
such that, with $\vec w^{m+1}:=\vec u^{m+1}-\vec\id|_{\Gm}$,
\begin{subequations}\label{eq:adm-mdr-system}
\begin{align}
0={}&
\frac1{\tau}
\ipd{\vec w^{m+1}\cdot\vec\nu_{h,p}^m,\zeta^h}_{\Gm}^h
-\ipd{\nabs z^{m+1},\nabs\zeta^h}_{\Gm}
\quad\forall\,\zeta^h\in\bV^h(\Gm),
\label{eq:adm-mdr-z}\\[0.4em]
0={}&
\ipd{z^{m+1},\vec\eta^h\cdot\vec\nu_{h,p}^m}_{\Gm}^h
+\ipd{\nabs\vec u^{m+1},\nabs\vec\eta^h}_{\Gm}
+\ipd{\nabs\vec p^{m+1},\nabs\vec\eta^h}_{\Gm}\nn\\
{}&
+\frac1\alpha\ipd{P_{h,p}^m\vec\eta^h,\vec p^{m+1}}_{\Gm}^h
\quad\forall\,\vec\eta^h\in[\bV^h(\Gm)]^d,
\label{eq:adm-mdr-u}\\[0.4em]
0={}&
\ipd{\mu^h\vec\nu_{h,p}^m,\vec p^{m+1}}_{\Gm}^h
\quad\forall\,\mu^h\in\bV^h(\Gm),
\label{eq:adm-mdr-lambda}\\[0.4em]
0={}&
\ipd{\nabs\vec w^{m+1},\nabs\vec y^h}_{\Gm}
+\frac1\alpha
\ipd{P_{h,p}^m\vec w^{m+1},\vec y^h}_{\Gm}^h
+\ipd{\lambda^{m+1}\vec\nu_{h,p}^m,\vec y^h}_{\Gm}^h
\nn\\
{}&\quad\forall\,\vec y^h\in[\bV^h(\Gm)]^d.
\label{eq:adm-mdr-p}
\end{align}
\end{subequations}
and then set $\Gamma^{m+1}=\vec u^{m+1}(\Gm)$. In the limit $\alpha\to+\infty$, the
scheme~\eqref{eq:adm-mdr-system} formally reduces to the dual-MDR
scheme~\eqref{eq:dual-mdr-system}.

The following theorem establishes unique solvability of the resulting linear system
in~\eqref{eq:adm-mdr-system}.
\begin{thm}[existence and uniqueness]
Assume that $\alpha>0$ and that
\begin{enumerate}[label=$(\mathbf{B \arabic*})$, ref=$\mathbf{B \arabic*}$]
\item \label{assumpRI} $\mH^{d-1}(\sigma) > 0$ for all $\sigma \in \mT^m$;
\item \label{assumpRII} $\dim\operatorname{span}\left(
\bigl\{\vec\nu_{h,p}^{m}(\vec q)\bigr\}_{\vec q\in \mQ^m}\right)=d$;
\item \label{assumpRIII} $\vec\nu_{h,p}^{m}(\vec q)\neq \vec 0$ for all
$\vec q\in\mQ^m$.
\end{enumerate}
Then the scheme~\eqref{eq:adm-mdr-system} admits a unique solution
\[
(\vec u^{m+1},z^{m+1},\vec p^{m+1},\lambda^{m+1})\in
[\bV^h(\Gm)]^d\times\bV^h(\Gm)\times[\bV^h(\Gm)]^d\times\bV^h(\Gm).
\]
\end{thm}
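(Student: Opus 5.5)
Since \eqref{eq:adm-mdr-system} is a square finite-dimensional linear system, I will show that the homogeneous system has only the trivial solution, mirroring the proof of the existence and uniqueness result for the admissible BGN scheme. In the homogeneous problem, $\vec w^{m+1}$ is replaced by $\vec u^h$, with unknowns $(\vec u^h,z^h,\vec p^h,\lambda^h)$. I then test the four equations with $\zeta^h=z^h$, $\vec\eta^h=\vec u^h$, $\mu^h=\lambda^h$ and $\vec y^h=\vec p^h$. Multiplying the tested \eqref{eq:adm-mdr-z} by $\tau$ and subtracting the tested \eqref{eq:adm-mdr-p} from the tested \eqref{eq:adm-mdr-u}, the cross terms cancel. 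These are $\ipd{\nabs\vec p^h,\nabs\vec u^h}$, the $z^h$ coupling term, and the $\lambda^h$ term, which vanishes by \eqref{eq:adm-mdr-lambda}. The relaxation terms also cancel, since both equal $\frac1\alpha\ipd{P^m_{h,p}\vec u^h,\vec p^h}^h_{\Gm}$; here $P^m_{h,p}$ is symmetric at each vertex and the lumped product is nodal. What remains is
\[
\tau\ipd{\nabs z^h,\nabs z^h}_{\Gm}+\ipd{\nabs\vec u^h,\nabs\vec u^h}_{\Gm}=0,
\]
so by \eqref{assumpRI} both $z^h=z^c$ and $\vec u^h=\vec u^c$ are constant.

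Next I determine $\vec p^h$. I choose $\vec\eta^h=\vec p^h$ in \eqref{eq:adm-mdr-u}. The $z^h$ term vanishes by \eqref{eq:adm-mdr-lambda} with $\mu^h=z^h$, and the $\vec u^h$ term vanishes because $\vec u^h$ is constant. This gives
\[
\ipd{\nabs\vec p^h,\nabs\vec p^h}_{\Gm}+\frac1\alpha\ipd{P^m_{h,p}\vec p^h,\vec p^h}^h_{\Gm}=0.
\]
Both terms are nonnegative, since $P^m_{h,p}(\vec q)$ is an orthogonal projection and $\alpha>0$. Hence $\vec p^h=\vec p^c$ is constant and $P^m_{h,p}(\vec q)\vec p^c=\vec 0$ at every vertex, so $\vec p^c$ is parallel to $\widehat{\vec\nu}^m_{h,p}(\vec q)$ at each vertex. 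Independently, \eqref{eq:adm-mdr-lambda} with nodal test functions $\mu^h$, together with \eqref{assumpRI}, gives $\vec p^c\cdot\vec\nu^m_{h,p}(\vec q)=0$ for all $\vec q$. By \eqref{assumpRII}, $\vec p^h=\vec 0$. (Either observation combined with \eqref{assumpRII} suffices.) Similarly, inserting the constants into \eqref{eq:adm-mdr-z} with nodal basis functions $\zeta^h$ yields $\vec u^c\cdot\vec\nu^m_{h,p}(\vec q)=0$ for all $\vec q$, so $\vec u^h=\vec 0$ by \eqref{assumpRII}.

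With $\vec u^h=\vec p^h=\vec 0$, the equations \eqref{eq:adm-mdr-u} and \eqref{eq:adm-mdr-p} reduce to $\ipd{z^c,\vec\eta^h\cdot\vec\nu^m_{h,p}}^h_{\Gm}=0$ and $\ipd{\lambda^h\vec\nu^m_{h,p},\vec y^h}^h_{\Gm}=0$. I choose nodal values $\vec\eta^h(\vec q)=z^c\vec\nu^m_{h,p}(\vec q)$ and $\vec y^h(\vec q)=\lambda^h(\vec q)\vec\nu^m_{h,p}(\vec q)$. The lumped quadrature then gives sums of $|z^c\vec\nu^m_{h,p}(\vec q)|^2$ and $|\lambda^h(\vec q)\vec\nu^m_{h,p}(\vec q)|^2$ weighted by positive element measures. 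By \eqref{assumpRI} and \eqref{assumpRIII}, $z^c=0$ and $\lambda^h=0$.

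The only step needing real care is the treatment of the $\alpha$-terms. First, the two relaxation terms in the energy combination must cancel exactly, which relies on the nodal symmetry of $P^m_{h,p}$ under the lumped product. Second, in the $\vec p^h$ step the relaxation term must have the correct sign. Everything else is a direct transcription of the admissible BGN argument, with $\vec\nu^m_{h,{\rm rd}}$ replaced by $\vec\nu^m_{h,p}$; well-definedness of $\widehat{\vec\nu}^m_{h,p}$ is guaranteed by \eqref{assumpRIII}.
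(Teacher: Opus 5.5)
Your proof is correct and follows essentially the same route as the paper. The energy-type testing gives constant $z^h$ and $\vec u^h$; testing with $\vec\eta^h=\vec p^h$ and $\mu^h=z^h$ gives constant $\vec p^h$; (B2) removes $\vec u^c$ and $\vec p^c$; and nodal test functions together with (B1) and (B3) remove $z^c$ and $\lambda^h$ (the paper only eliminates $\lambda^h$ earlier, right after $\vec u^c=\vec 0$). As a minor remark, the cancellation of the $\alpha$-terms needs no symmetry of $P^m_{h,p}$, because both tested terms are literally $\frac1\alpha\ipd{P^m_{h,p}\vec u^h,\vec p^h}^h_{\Gm}$.
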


\begin{proof}
As in the proof of the corresponding result for~\eqref{eq:adm-bgn-system}, the
finite-dimensional linear system is square, so it is enough to consider the homogeneous
system. Let
$(\vec u^h,z^h,\vec p^h,\lambda^h)\in
[\bV^h(\Gm)]^d\times\bV^h(\Gm)\times[\bV^h(\Gm)]^d\times\bV^h(\Gm)$ solve the
homogeneous version of~\eqref{eq:adm-mdr-system}, i.e., for all
$(\zeta^h,\vec\eta^h,\mu^h,\vec y^h)\in
\bV^h(\Gm)\times[\bV^h(\Gm)]^d\times\bV^h(\Gm)
\times[\bV^h(\Gm)]^d$,
\begin{subequations}\label{eq:homdr-system}
\begin{align}
0={}&
\frac1{\tau}\ipd{\vec u^h\cdot\vec\nu_{h,p}^m,\zeta^h}_{\Gm}^h
-\ipd{\nabs z^h,\nabs\zeta^h}_{\Gm},
\label{eq:homdr-z}\\
0={}&
\ipd{z^h,\vec\eta^h\cdot\vec\nu_{h,p}^m}_{\Gm}^h
+\ipd{\nabs\vec u^h,\nabs\vec\eta^h}_{\Gm}\nn\\
{}&
+\ipd{\nabs\vec p^h,\nabs\vec\eta^h}_{\Gm}
+\frac1\alpha\ipd{P_{h,p}^m\vec\eta^h,\vec p^h}_{\Gm}^h,
\label{eq:homdr-u}\\
0={}&
\ipd{\mu^h\vec\nu_{h,p}^m,\vec p^h}_{\Gm}^h,
\label{eq:homdr-lambda}\\
0={}&
\ipd{\nabs\vec u^h,\nabs\vec y^h}_{\Gm}
+\frac1\alpha\ipd{P_{h,p}^m\vec u^h,\vec y^h}_{\Gm}^h
+\ipd{\lambda^h\vec\nu_{h,p}^m,\vec y^h}_{\Gm}^h.
\label{eq:homdr-p}
\end{align}
\end{subequations}
We test~\eqref{eq:homdr-z} with $\zeta^h=z^h$,~\eqref{eq:homdr-u} with
$\vec\eta^h=\vec u^h$,~\eqref{eq:homdr-lambda} with
$\mu^h=\lambda^h$, and~\eqref{eq:homdr-p} with
$\vec y^h=\vec p^h$.  Multiplying the tested~\eqref{eq:homdr-z} by
$\tau$ and subtracting the tested~\eqref{eq:homdr-p} from the
tested~\eqref{eq:homdr-u}, while using the tested~\eqref{eq:homdr-lambda},
gives
\[
\tau\ipd{\nabs z^h,\nabs z^h}_{\Gm}
+\ipd{\nabs\vec u^h,\nabs\vec u^h}_{\Gm}=0.
\]
Hence $z^h=z^c$ and $\vec u^h=\vec u^c$ are constants. Inserting this
into~\eqref{eq:homdr-z} yields
\[
\ipd{\vec u^c\cdot\vec\nu_{h,p}^m,\zeta^h}_{\Gm}^h=0
\quad\forall\,\zeta^h\in\bV^h(\Gm),
\]
and~\ref{assumpRII} implies $\vec u^c=\vec0$. Equation~\eqref{eq:homdr-p} then gives
$\lambda^h=0$ by~\ref{assumpRIII}. Next, testing~\eqref{eq:homdr-u} with
$\vec\eta^h=\vec p^h$ and~\eqref{eq:homdr-lambda} with $\mu^h=z^c$ gives
\[
\ipd{\nabs\vec p^h,\nabs\vec p^h}_{\Gm}
+\frac1\alpha\ipd{P_{h,p}^m\vec p^h,\vec p^h}_{\Gm}^h=0.
\]
Both terms are nonnegative, since $P_{h,p}^m$ is the nodal orthogonal projection onto
the discrete tangent plane, and hence $\vec p^h=\vec p^c$ is constant.
Using~\eqref{eq:homdr-lambda} and the spanning condition~\ref{assumpRII} gives
$\vec p^c=\vec0$. Finally,~\eqref{eq:homdr-u} with
$\vec p^h=\vec0$ gives
\[
\ipd{z^c,\vec\eta^h\cdot\vec\nu_{h,p}^m}_{\Gm}^h=0
\quad\forall\,\vec\eta^h\in[\bV^h(\Gm)]^d,
\]
and~\ref{assumpRIII} implies $z^c=0$. Thus the homogeneous system has only the zero
solution.
\end{proof}

The identity map is admissible for~\eqref{eq:adm-mdr-p}: if
$\vec u^{m+1}=\vec\id|_{\Gm}$, then $\vec w^{m+1}=\vec0$
and~\eqref{eq:adm-mdr-p} holds with $\lambda^{m+1}=0$.  This gives the following
stability result.

\begin{thm}[unconditional stability]
Let
\[
(\vec u^{m+1},z^{m+1},\vec p^{m+1},\lambda^{m+1})\in
[\bV^h(\Gm)]^d\times\bV^h(\Gm)\times[\bV^h(\Gm)]^d\times\bV^h(\Gm)
\]
be a solution to~\eqref{eq:adm-mdr-system}. Then
\begin{equation}\label{eq:adm-mdr-energy}
|\Gamma^{m+1}|+\tau
\ipd{\nabs z^{m+1},\nabs z^{m+1}}_{\Gm}
\leq |\Gm|\qquad 0\leq m\leq M-1.
\end{equation}
\end{thm}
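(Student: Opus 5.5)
The plan mirrors the proof of unconditional stability for the admissible BGN scheme~\eqref{eq:adm-bgn-system}: test the four equations so that the multiplier terms cancel, reduce to an inequality involving $\ipd{\nabs\vec u^{m+1},\nabs\vec w^{m+1}}_{\Gm}$, and finish with~\eqref{eq:surfaceinq}. Concretely, I will test~\eqref{eq:adm-mdr-z} with $\zeta^h=z^{m+1}$ and multiply by $\tau$, which gives
\[
\ipd{\vec w^{m+1}\cdot\vec\nu_{h,p}^m,z^{m+1}}_{\Gm}^h=\tau\ipd{\nabs z^{m+1},\nabs z^{m+1}}_{\Gm}.
\]
Next I will test~\eqref{eq:adm-mdr-u} with $\vec\eta^h=\vec w^{m+1}$. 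This yields
\[
0=\tau\ipd{\nabs z^{m+1},\nabs z^{m+1}}_{\Gm}+\ipd{\nabs\vec u^{m+1},\nabs\vec w^{m+1}}_{\Gm}
+\ipd{\nabs\vec p^{m+1},\nabs\vec w^{m+1}}_{\Gm}+\tfrac1\alpha\ipd{P_{h,p}^m\vec w^{m+1},\vec p^{m+1}}_{\Gm}^h .
\]
Here I use the symmetry of the nodal matrices $P_{h,p}^m(\vec q)$ to move $P_{h,p}^m$ onto the first argument in the mass-lumped product.

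The last two terms should cancel. Testing~\eqref{eq:adm-mdr-p} with $\vec y^h=\vec p^{m+1}$ shows that they equal $-\ipd{\lambda^{m+1}\vec\nu_{h,p}^m,\vec p^{m+1}}_{\Gm}^h$. This term vanishes by~\eqref{eq:adm-mdr-lambda} with $\mu^h=\lambda^{m+1}$. This is precisely where admissibility enters: the constraint is written in terms of $\vec w^{m+1}$ rather than $\vec u^{m+1}$, so the identity map is feasible. As a result, no extra term of the form $\ipd{\nabs\vec p^{m+1},\nabs\vec\id}_{\Gm}$ appears, unlike the BGN case where a separate argument via~\eqref{eq:bgn-id-admissible} was needed. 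We are left with
\[
\tau\ipd{\nabs z^{m+1},\nabs z^{m+1}}_{\Gm}+\ipd{\nabs\vec u^{m+1},\nabs(\vec u^{m+1}-\vec\id)}_{\Gm}=0.
\]

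Finally, I will apply the inequality~\eqref{eq:surfaceinq}, namely $\ipd{\nabs\vec u^{m+1},\nabs(\vec u^{m+1}-\vec\id)}_{\Gm}\geq|\Gamma^{m+1}|-|\Gm|$, which immediately gives~\eqref{eq:adm-mdr-energy}. The argument contains no genuine obstacle. The only points needing care are the bookkeeping of signs, so that the $\vec p^{m+1}$ terms cancel exactly, and the symmetry of $P_{h,p}^m$ in the lumped inner product. The nonnegativity of $P_{h,p}^m$ is not even needed here. The step that carries real geometric content is the cited lemma~\eqref{eq:surfaceinq}, which is taken from~\cite[Lemma 57]{Barrett20}.
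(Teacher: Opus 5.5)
Your proof is correct and is essentially the paper's argument. It uses the same four test functions ($\zeta^h=z^{m+1}$, $\vec\eta^h=\vec w^{m+1}$, $\vec y^h=\vec p^{m+1}$, $\mu^h=\lambda^{m+1}$), the same cancellation of the $\vec p^{m+1}$ and relaxed tangential terms, and the same closing appeal to~\eqref{eq:surfaceinq}. One small remark: the symmetry of $P_{h,p}^m$ is not needed, since substituting $\vec\eta^h=\vec w^{m+1}$ into~\eqref{eq:adm-mdr-u} already produces $\frac1\alpha\ipd{P_{h,p}^m\vec w^{m+1},\vec p^{m+1}}_{\Gm}^h$, which is verbatim the term in~\eqref{eq:adm-mdr-p} with $\vec y^h=\vec p^{m+1}$.
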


\begin{proof}
We test~\eqref{eq:adm-mdr-z} with $\zeta^h=z^{m+1}$,~\eqref{eq:adm-mdr-u} with
$\vec\eta^h=\vec u^{m+1}-\vec\id|_{\Gm}$, and use~\eqref{eq:adm-mdr-p} with
$\vec y^h=\vec p^{m+1}$ together with~\eqref{eq:adm-mdr-lambda} with
$\mu^h=\lambda^{m+1}$.  The relaxed
tangential terms cancel, and we obtain
\[
\tau\ipd{\nabs z^{m+1},\nabs z^{m+1}}_{\Gm}
+\ipd{\nabs\vec u^{m+1},
\nabs[\vec u^{m+1}-\vec\id]}_{\Gm}=0.
\]
Using the standard surface-area inequality in~\eqref{eq:surfaceinq} then
gives~\eqref{eq:adm-mdr-energy}.
\end{proof}

\section{Further insights and extensions}
\label{sec:further-insights}

We briefly discuss two possible extensions of the minimizing-movement framework.

First, for surface diffusion flow, it is often desirable to enforce exact volume
preservation at the fully discrete level. This can be achieved by modifying the fully
discrete parametric finite element approximation. For example, in the admissible BGN
scheme~\eqref{eq:adm-bgn-system}, one may replace
$\vec\nu_{h,{\rm rd}}^m$ in~\eqref{eq:adm-bgn-z} and~\eqref{eq:adm-bgn-u}
by the time-weighted discrete normal
$\vec\nu^{m+\frac12}$ introduced in~\cite{BZ21SPFEM}.  This normal satisfies
\begin{equation}\label{eq:timeweight}
\vol(\Gamma^{m+1})-\vol(\Gamma^m)
=\ipd{(\vec u^{m+1}-\vec\id)\cdot
\vec\nu^{m+\frac12},1}_{\Gm}^h,
\quad
\Gamma^{m+1}=\vec u^{m+1}(\Gm).
\end{equation}
With this replacement, choosing $\zeta^h=1$ in the modified version
of~\eqref{eq:adm-bgn-z} and using~\eqref{eq:timeweight} gives
$\vol(\Gamma^{m+1})=\vol(\Gamma^m)$.  The same algebraic testing argument
still yields the energy estimate, because the same weighted normal is used in the two
equations and the cancellation between the $z^{m+1}$-terms is preserved. The price is
that $\vec\nu^{m+\frac12}$ depends on the new surface. The resulting linear system is
therefore replaced by a nonlinear polynomial system, whose solvability would require a
separate analysis.

Second, the framework can also be adapted to anisotropic geometric flows. Let
$\gamma:\bR^d\to\bR_{\geq0}$ be a convex, positively one-homogeneous
anisotropy and consider the anisotropic surface energy
\[
E_\gamma(\Gamma)=\int_{\Gamma}\gamma(\vec\nu)\,\dH^{d-1}.
\]
The corresponding $L^2$- and $H^{-1}$-gradient flows are anisotropic mean curvature flow
and anisotropic surface diffusion flow, respectively; see~\cite{BGN08Ani,BGN08ani3d,
BLani23}. In the present framework, one would replace the isotropic Dirichlet energy in
$J$ by a suitable discrete anisotropic energy. For example, in view of the work
in~\cite{BLani23}, the anisotropic Dirichlet energy could take the form
\begin{equation}
\int_{\Gamma}\nabs\vec u\cdot(\mathbf{Z}(\vec\nu)\nabs\vec u)\,\dH^{d-1},
\end{equation}
where $\mathbf{Z}(\vec\nu)$ is the symmetric positive definite matrix defined
in~\cite[(2.1)]{BLani23}, which satisfies an anisotropic energy inequality of the form
\[
E_\gamma^h(\Gamma^{m+1})-E_\gamma^h(\Gm)
\leq \ipd{\mathbf{Z}(\vec\nu^m)\nabs\vec u^{m+1},~\nabs(\vec u^{m+1}-\vec\id)}_{\Gm}.
\]
Further investigation of anisotropic geometric flows within the minimizing-movement
framework is left for future work.

\section{Numerical results}
\label{sec:numerics}

In this section, we compare four fully discrete schemes:
\begin{itemize}
\item the BGN scheme in~\eqref{eq:bgn-system};
\item the dual-MDR scheme in~\eqref{eq:dual-mdr-system}, abbreviated below
as MDR;
\item the admissible BGN scheme in~\eqref{eq:adm-bgn-system}, abbreviated below as ad-BGN;
\item the relaxed MDR scheme in~\eqref{eq:adm-mdr-system}, abbreviated below as r-MDR.
\end{itemize}
The schemes are implemented using the open-source finite element package
NGSolve~\cite{schoberl2014c++}. The resulting sparse linear systems are solved with the
UMFPACK direct solver~\cite{Davis04}. Unless stated otherwise, the relaxation parameter
in the r-MDR scheme is fixed at
$\alpha=10$.  To quantify the quality of the polyhedral mesh, we use the
elementwise mesh-quality indicator
\[
  \mathsf{r}(\sigma) = \frac{R_*(\sigma)}{2r_*(\sigma)},\qquad \sigma\in\mT^m,
\]
where $R_*(\sigma)$ denotes the circumradius of the triangle $\sigma$, and
$r_*(\sigma)$ denotes its inradius, so that $2r_*(\sigma)$ is the diameter
of the inscribed ball. We report two mesh-quality statistics: $\mathsf{r}_{\rm p95}^m$,
the largest value remaining after
discarding the worst $5\%$ of the elements, and $\mathsf{r}_{\max}^m$, the
maximum value over all elements. Geometric errors between two closed surfaces
$\Gamma_1$ and $\Gamma_2$ are measured by the symmetric-difference volume
(see~\cite{Zhao2021energy})
\begin{equation}
  \mathrm{M}(\Gamma_1,\Gamma_2)
  := |\Omega_1\triangle\Omega_2|
  = |\Omega_1| + |\Omega_2| - 2|\Omega_1\cap\Omega_2|,
  \label{eq:symdiff-volume}
\end{equation}
where $\Omega_1$ and $\Omega_2$ are the regions enclosed by $\Gamma_1$ and $\Gamma_2$,
respectively, and $|\Omega|$ denotes the volume of the region $\Omega$.

\subsection{Spherical convergence test}
We first consider mean curvature flow for a shrinking sphere, for which the exact
solution is given by
\begin{equation}
\Gamma(t)=r(t)\mathbb{S}^2,\quad r(t)=\sqrt{1-4t},\quad \varkappa=-\frac{2}{r(t)},\qquad 0\leq t\leq 0.25.
\end{equation}
The computation is carried out up to $T=0.24$ on a sequence of successively refined
space-time discretizations,
\begin{align*}
(J,\tau)\in\{&(324,10^{-2}),\ (1296,2.5\times10^{-3}),\\
&(5184,6.25\times10^{-4}),\ (20736,1.5625\times10^{-4})\}.
\end{align*}
We measure the numerical error using the discrete nodal $L^2$ error
\[
\norm{\vec u^h-\vec u}_{L^2}(T)
=\sqrt{1/K\sum_{\vec q\in\mQ^M}
\left|\vec u^M(\vec q)-r(T)\vec u^0(\vec q)\right|^2},
\]
and the symmetric-difference volume $\mathrm{M}(\Gamma^M,\Gamma(T))$. We also monitor
the maximum radial position error
\begin{align*}
\norm{\vec u^h-\vec u}_{L^\infty} &=
\max_{0\leq m\leq M}\max_{\vec q\in \mQ^m}
\bigl||\vec u^m(\vec q)| - r(t_m)\bigr|.
%\\
%\norm{\varkappa^h-\varkappa}_{L^\infty} &=
%\max_{0\leq m\leq M}\max_{\vec q\in \mQ^m}
%\left|\varkappa^m(\vec q) + \frac{2}{r(t_m)}\right|.
\end{align*}

\begin{table}[tbp]
\centering
\small
\caption{Errors and convergence orders for the shrinking sphere under mean curvature flow.}
\label{tab:mcf-exp51-position-summary}
\begin{tabular}{l r c c c c c c}
\toprule
\multirow{2}{*}{Scheme} & \multirow{2}{*}{$J$}
& \multicolumn{2}{c}{$\norm{\vec u^h - \vec u}_{L^2}(T)$}
& \multicolumn{2}{c}{$\norm{\vec u^h - \vec u}_{L^\infty}$}
& \multicolumn{2}{c}{$\mathrm{M}(\Gamma^M,\Gamma(T))$} \\
\cmidrule(lr){3-4} \cmidrule(lr){5-6} \cmidrule(lr){7-8}
& & Error & Order & Error & Order & Error & Order \\
\midrule
\multirow{4}{*}{BGN}
&   324 & $7.46\mathrm{E}{-2}$ & --   & $7.75\mathrm{E}{-2}$ & --   & $4.74\mathrm{E}{-2}$ & --\\
&  1296 & $3.17\mathrm{E}{-2}$ & 1.24 & $2.79\mathrm{E}{-2}$ & 1.48 & $1.43\mathrm{E}{-2}$ & 1.72\\
&  5184 & $1.89\mathrm{E}{-2}$ & 0.75 & $8.14\mathrm{E}{-3}$ & 1.78 & $3.83\mathrm{E}{-3}$ & 1.91\\
& 20736 & $1.72\mathrm{E}{-2}$ & 0.13 & $2.16\mathrm{E}{-3}$ & 1.92 & $9.70\mathrm{E}{-4}$ & 1.98\\
\midrule
\multirow{4}{*}{MDR}
&   324 & $6.97\mathrm{E}{-2}$ & --   & $7.48\mathrm{E}{-2}$ & --   & $4.48\mathrm{E}{-2}$ & --\\
&  1296 & $2.50\mathrm{E}{-2}$ & 1.48 & $2.66\mathrm{E}{-2}$ & 1.49 & $1.36\mathrm{E}{-2}$ & 1.72\\
&  5184 & $7.21\mathrm{E}{-3}$ & 1.79 & $7.67\mathrm{E}{-3}$ & 1.79 & $3.63\mathrm{E}{-3}$ & 1.91\\
& 20736 & $1.87\mathrm{E}{-3}$ & 1.95 & $2.01\mathrm{E}{-3}$ & 1.93 & $9.19\mathrm{E}{-4}$ & 1.98\\
\midrule
\multirow{4}{*}{\shortstack[l]{r-MDR}}
&   324 & $6.97\mathrm{E}{-2}$ & --   & $7.48\mathrm{E}{-2}$ & --   & $4.48\mathrm{E}{-2}$ & --\\
&  1296 & $2.50\mathrm{E}{-2}$ & 1.48 & $2.66\mathrm{E}{-2}$ & 1.49 & $1.36\mathrm{E}{-2}$ & 1.72\\
&  5184 & $7.21\mathrm{E}{-3}$ & 1.79 & $7.67\mathrm{E}{-3}$ & 1.79 & $3.63\mathrm{E}{-3}$ & 1.91\\
& 20736 & $1.87\mathrm{E}{-3}$ & 1.95 & $2.01\mathrm{E}{-3}$ & 1.93 & $9.19\mathrm{E}{-4}$ & 1.98\\
\midrule
\multirow{4}{*}{ad-BGN}
&   324 & $6.96\mathrm{E}{-2}$ & --   & $7.48\mathrm{E}{-2}$ & --   & $4.49\mathrm{E}{-2}$ & --\\
&  1296 & $2.50\mathrm{E}{-2}$ & 1.48 & $2.66\mathrm{E}{-2}$ & 1.49 & $1.36\mathrm{E}{-2}$ & 1.72\\
&  5184 & $7.21\mathrm{E}{-3}$ & 1.79 & $7.67\mathrm{E}{-3}$ & 1.79 & $3.63\mathrm{E}{-3}$ & 1.91\\
& 20736 & $1.87\mathrm{E}{-3}$ & 1.95 & $2.01\mathrm{E}{-3}$ & 1.93 & $9.19\mathrm{E}{-4}$ & 1.98\\
\bottomrule
\end{tabular}
\end{table}

As shown in~\Cref{tab:mcf-exp51-position-summary}, the radial position errors and the
symmetric-difference errors converge at nearly second order for all four schemes. The
nodal
$L^2$ error behaves differently because it is sensitive to the parametrization
of the surface. This effect is most apparent for BGN: its nodal error stagnates under
refinement, whereas the geometric error continues to converge at the expected rate.

\subsection{MCF of a dumbbell surface}

\begin{figure}[t]
  \centering
    \includegraphics[width=0.45\textwidth]{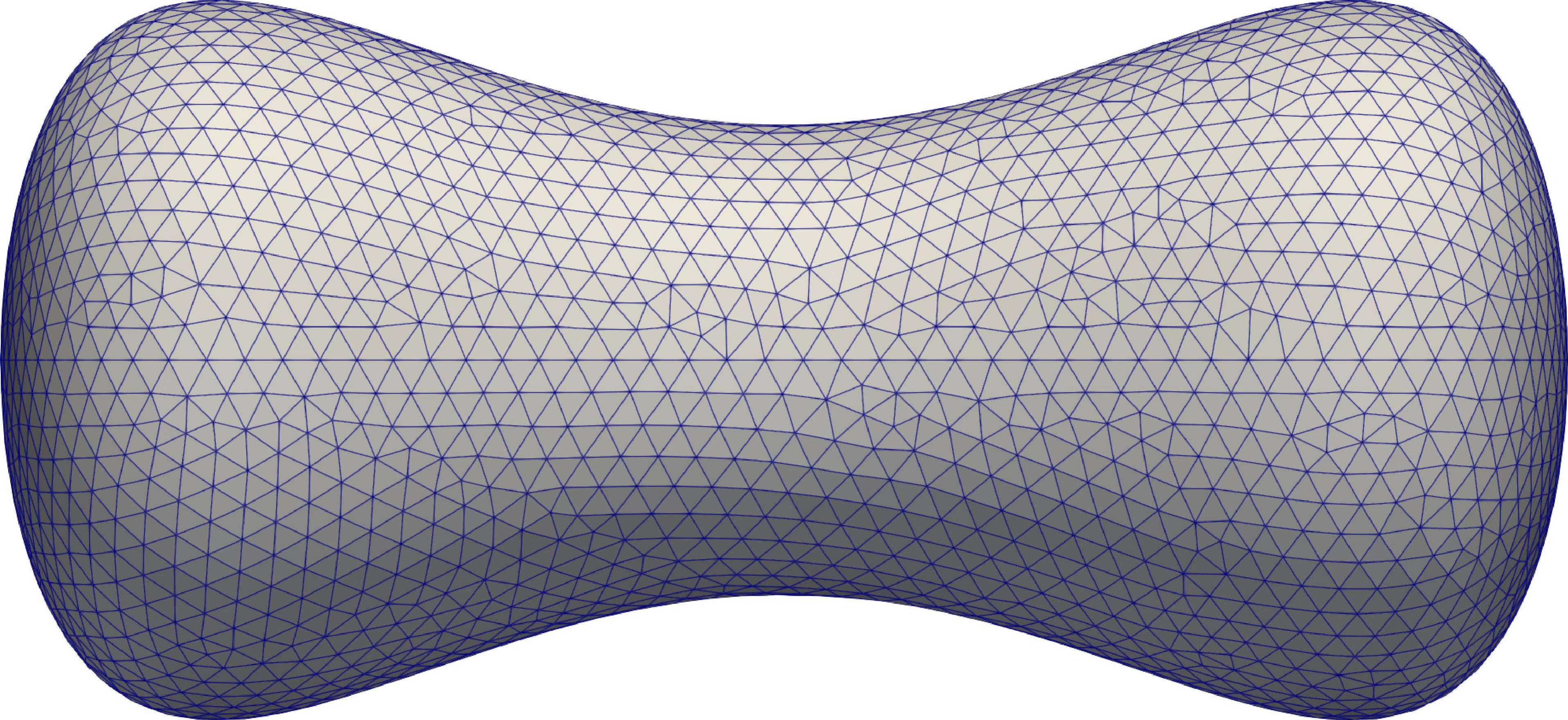}
  \\[2ex]
    \includegraphics[width=0.45\textwidth]{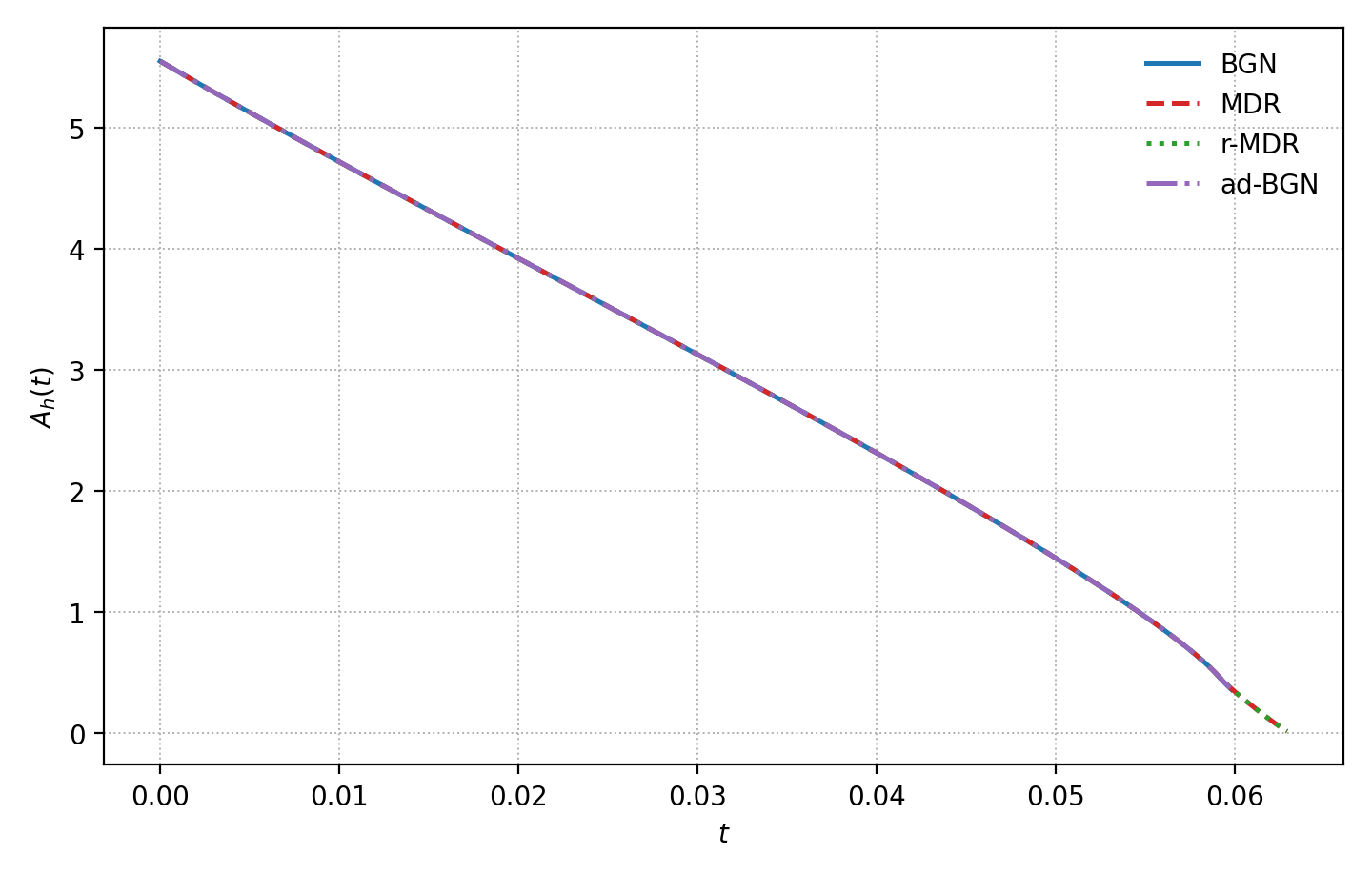}
  \hspace{1cm}
    \includegraphics[width=0.45\textwidth]{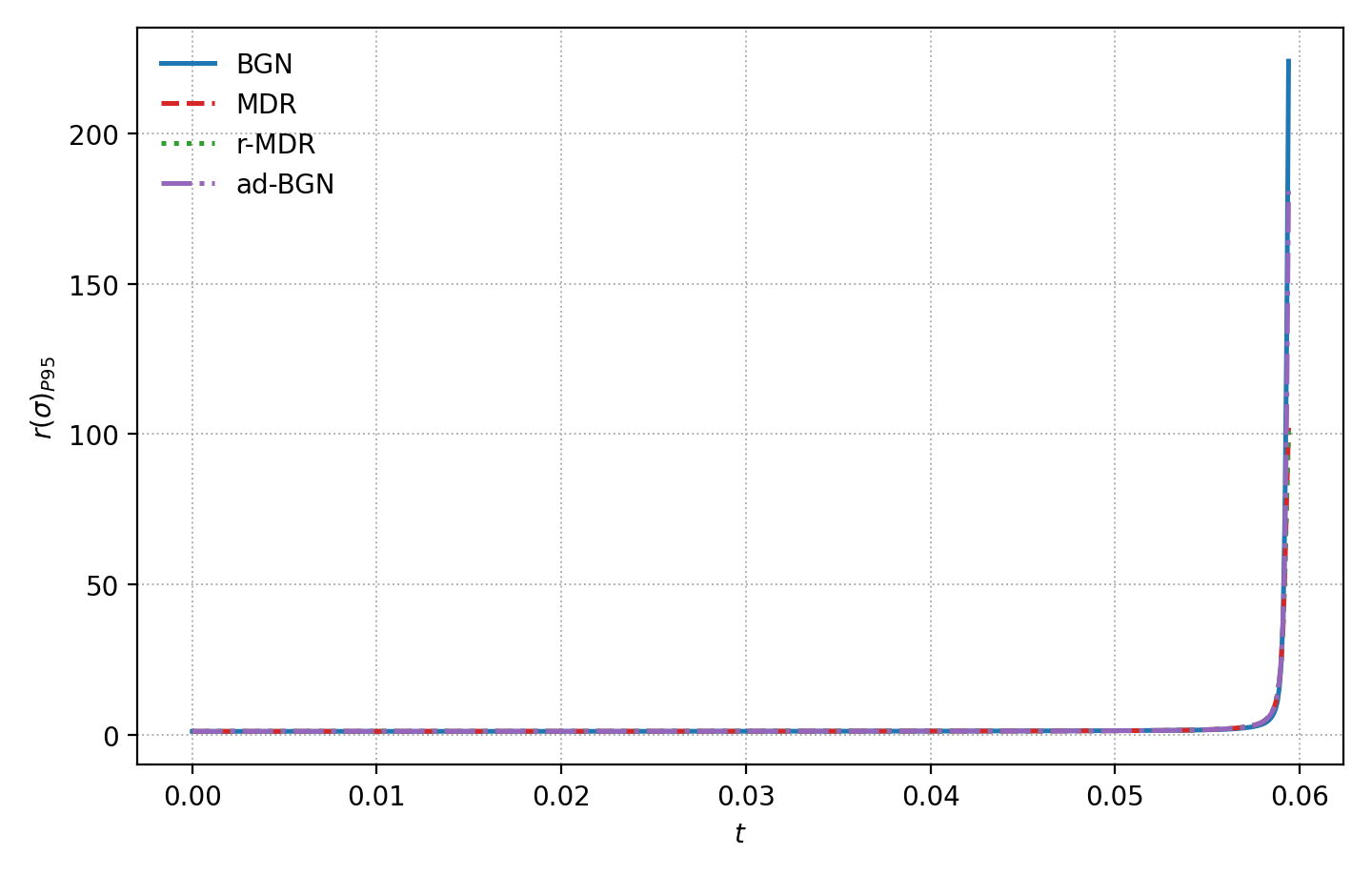}
  \caption{The initial polyhedral surface, surface-area decay, and the
  mesh-quality statistic $\mathsf{r}_{\rm p95}^m$ in the pinching dumbbell
  test.}
  \label{fig:exp52-area-p95}
\end{figure}

We next consider mean curvature flow of dumbbell-shaped surfaces. The initial surface is
parametrized by
\begin{equation}\label{eq:dumb}
\vec u_0(\theta,\varphi) = \begin{pmatrix}
    \cos\varphi\\[0.4em]
    (a\cos^2\varphi+b)\cos\theta\sin\varphi\\[0.4em]
    (a\cos^2\varphi+b)\sin\theta\sin\varphi
  \end{pmatrix},\quad
  \theta\in[0,2\pi),\quad \varphi\in[0,\pi],
\end{equation}
where $a\geq0$ and $b\geq0$ are parameters to control the neck thickness.

In the first experiment, we take $a=0.7$ and $b=0.3$. The initial surface is discretized
with parameters $(J,K)=(5632,2818)$, and the time step size is
$\tau=10^{-4}$.  For this choice of parameters, the smooth mean-curvature
evolution is expected to develop a topological pinch-off.

As shown in~\Cref{fig:exp52-area-p95}, the surface-area curves are nearly
indistinguishable before the pinching stage. The plot of
$\mathsf{r}_{\rm p95}^m$ is truncated at $t=0.0595$, before severe neck
degeneration occurs, so that mesh quality is compared only on nondegenerate meshes. At
$t=0.0597$, all four schemes still resolve the thin neck; see~\Cref{fig:exp52-t00597}.

The schemes differ mainly in how the fixed-connectivity polyhedral meshes behave near
the pinching regime. For the classical BGN scheme, the computation stops because of mesh
degeneration while the surface area is still approximately
$5\times10^{-1}$.  By contrast, MDR and r-MDR remain computable up to
$t=0.063$, by which time the area has fallen below $5\times10^{-3}$.  This
extended computability should be viewed as fixed-connectivity behavior near an
underresolved neck, rather than as a resolution of the topological change. The ad-BGN
scheme also extends the computable time beyond BGN, although it stops at an intermediate
stage. Representative late-stage profiles are shown in~\Cref{fig:exp52-late}.

\begin{figure}[!tbp]
  \centering
  \subfloat[BGN]{
    \includegraphics[width=0.45\textwidth]{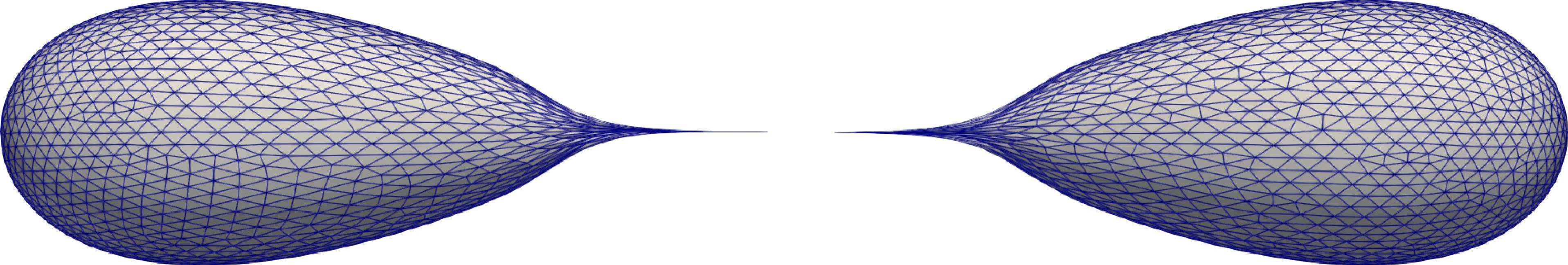}
  }
  \hfill
  \subfloat[ad-BGN]{
    \includegraphics[width=0.45\textwidth]{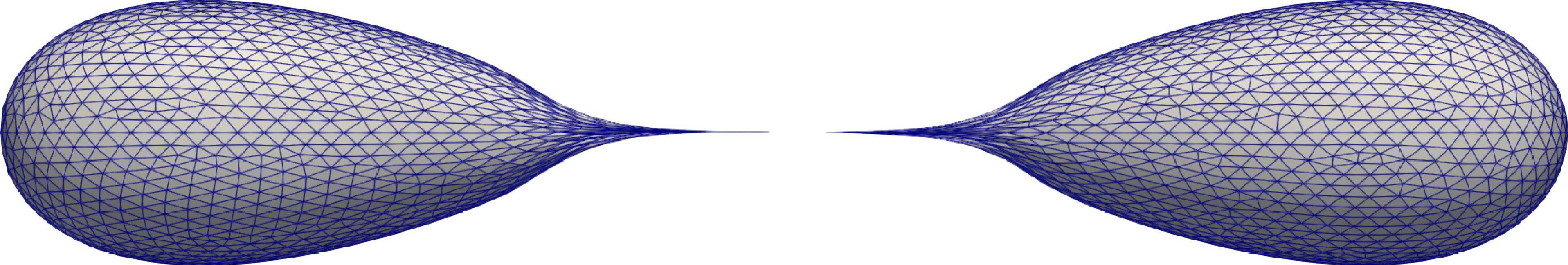}
  }
\\[2ex]
  \subfloat[MDR]{
    \includegraphics[width=0.45\textwidth]{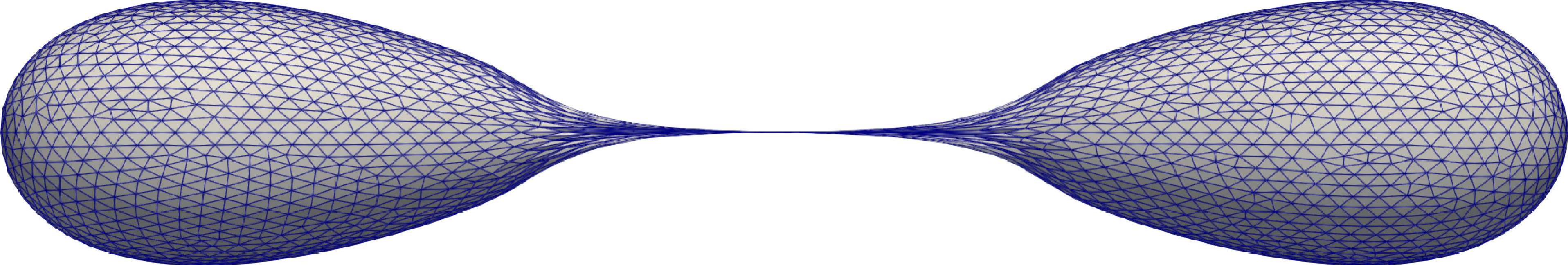}
  }
  \hfill
  \subfloat[r-MDR]{
    \includegraphics[width=0.45\textwidth]{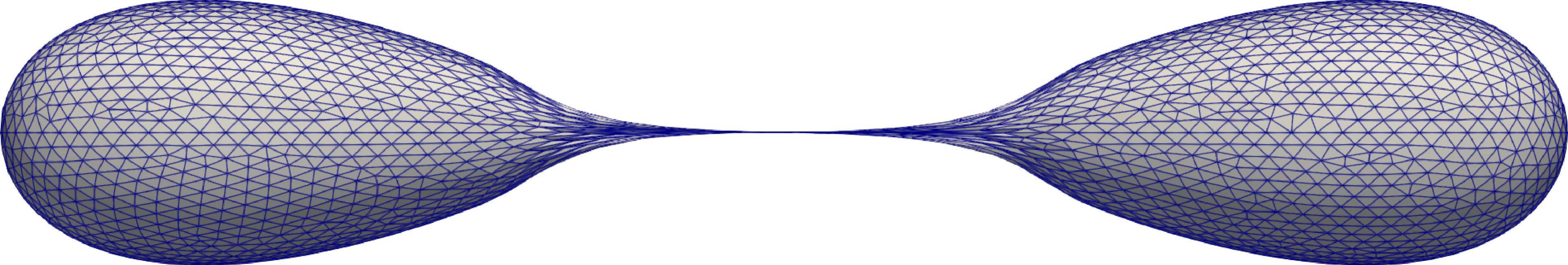}
  }
  \caption{Polyhedral surfaces at $t=0.0597$ in the pinching dumbbell test.}
  \label{fig:exp52-t00597}
\end{figure}

\begin{figure}[!tbp]
  \centering
  \subfloat[ad-BGN, $t=0.0599$]{
    \includegraphics[width=0.31\textwidth]{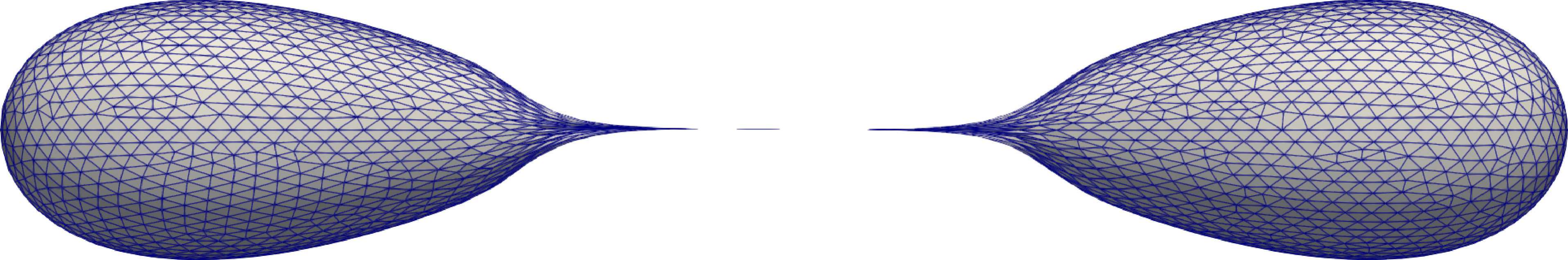}
  }
  \hfill
  \subfloat[MDR, $t=0.0611$]{
    \includegraphics[width=0.31\textwidth]{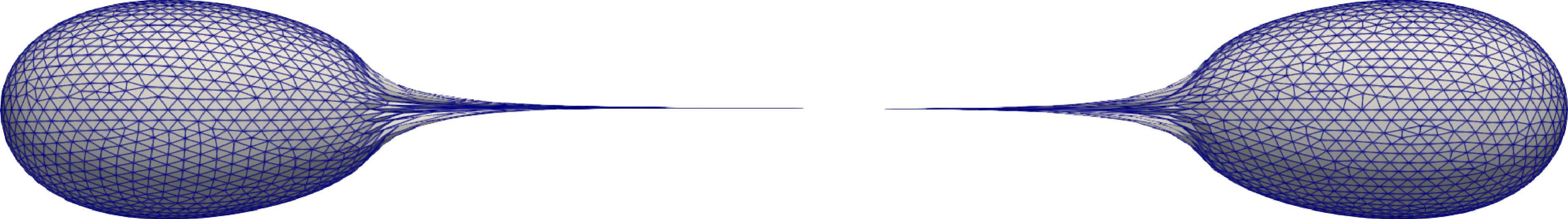}
  }
  \hfill
  \subfloat[r-MDR, $t=0.0611$]{
    \includegraphics[width=0.31\textwidth]{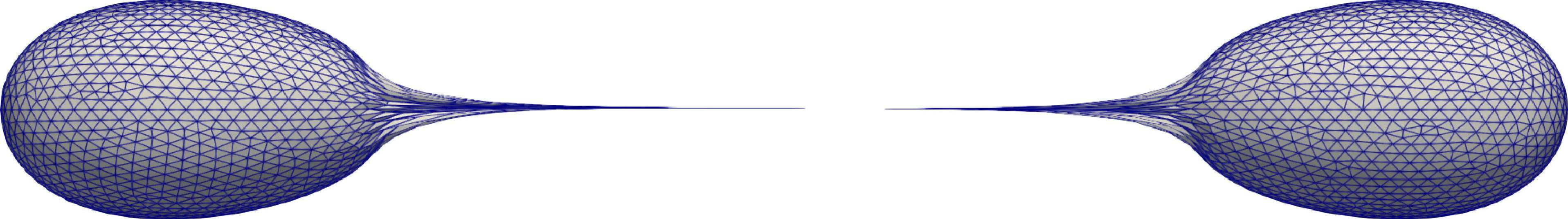}
  }
  \caption{Late-stage polyhedral surfaces for ad-BGN, MDR, and r-MDR.}
  \label{fig:exp52-late}
\end{figure}

\begin{figure}[htbp]
  \centering
  \subfloat[Initial mesh]{
    \includegraphics[width=0.48\textwidth]{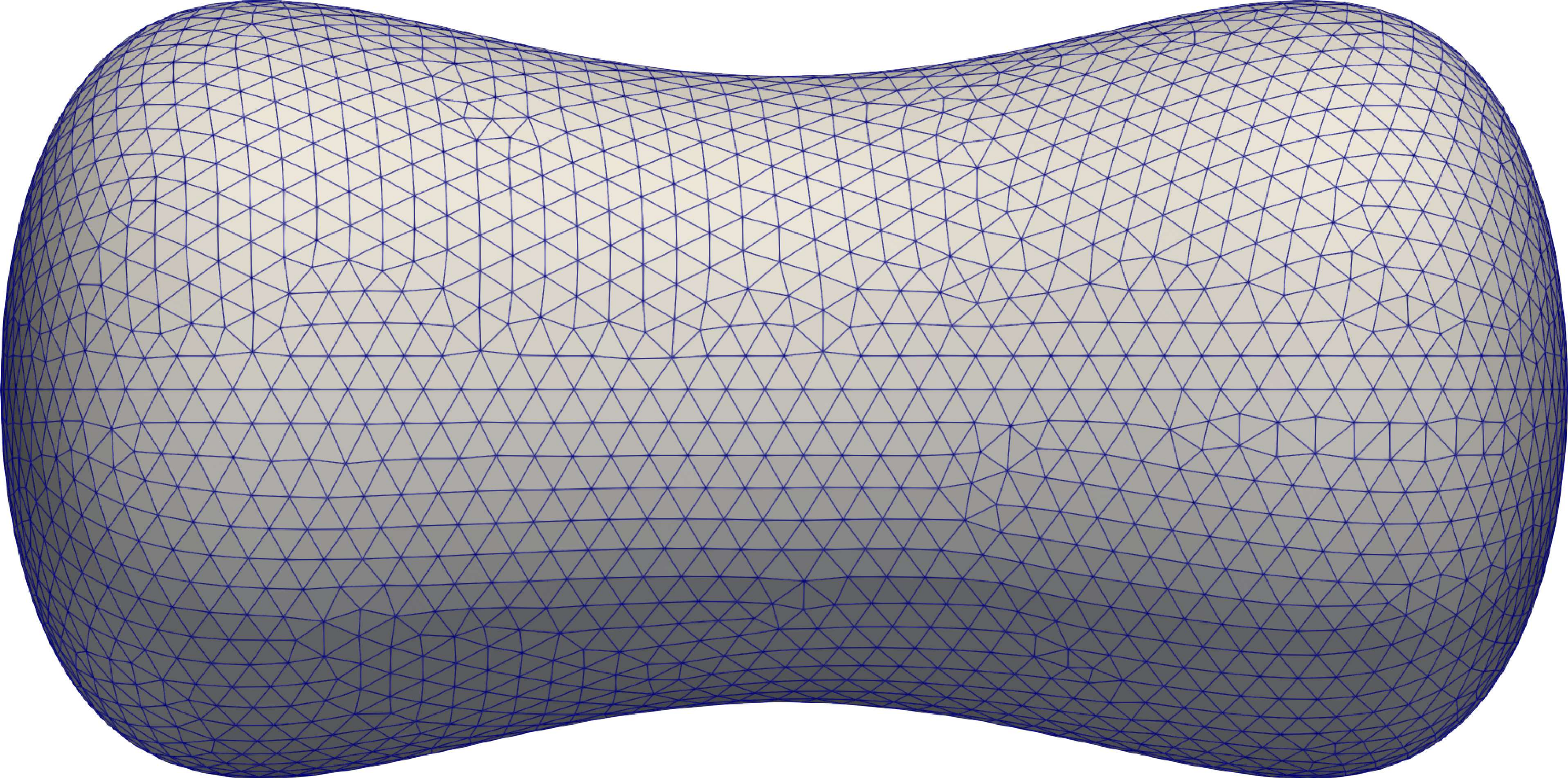}
  }%
  \subfloat[Surface area]{
    \includegraphics[width=0.48\textwidth]{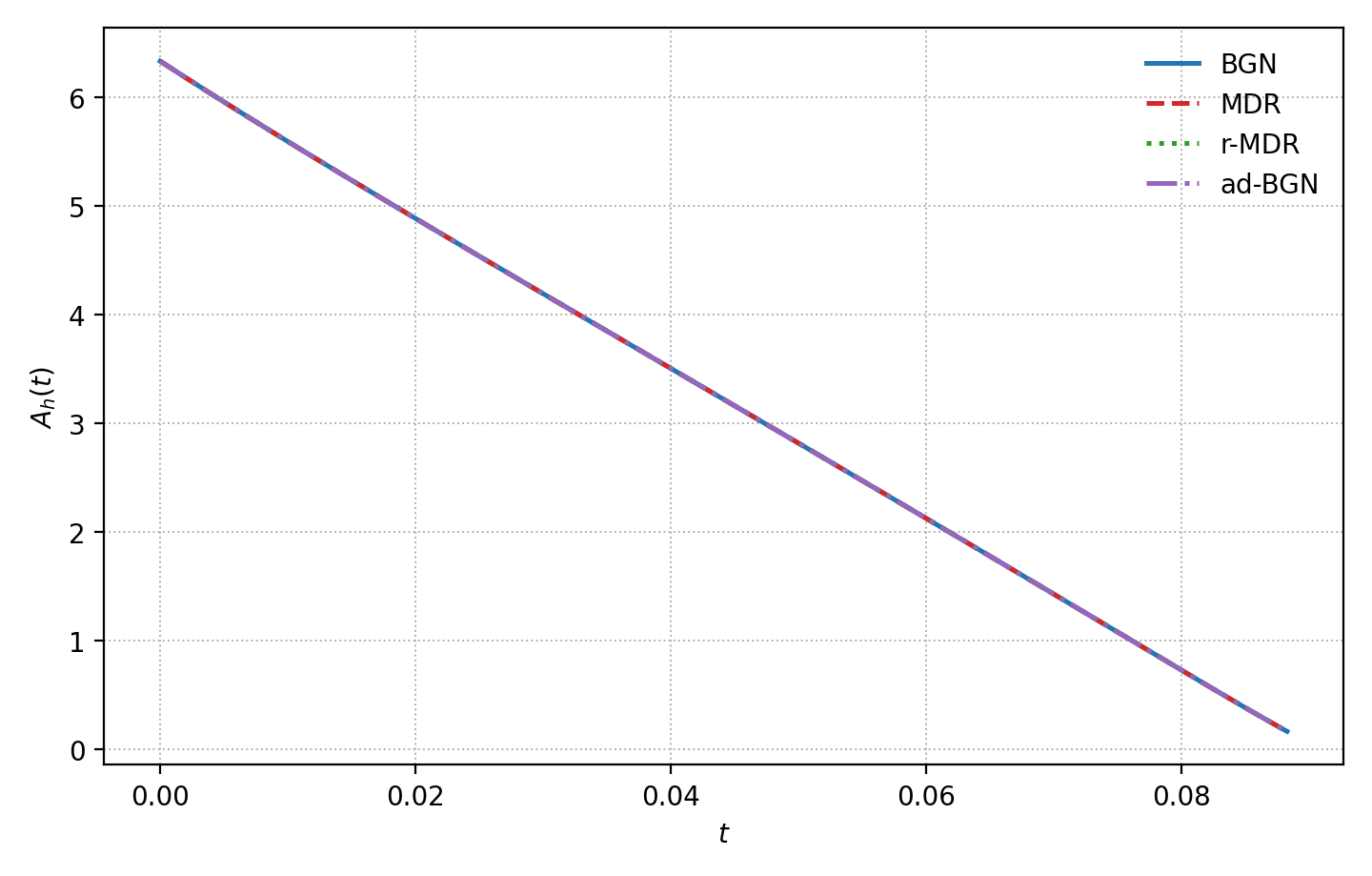}
  }\\[0pt]%
  \subfloat[$\mathsf{r}_{\rm p95}^m$]{
    \includegraphics[width=0.48\textwidth]{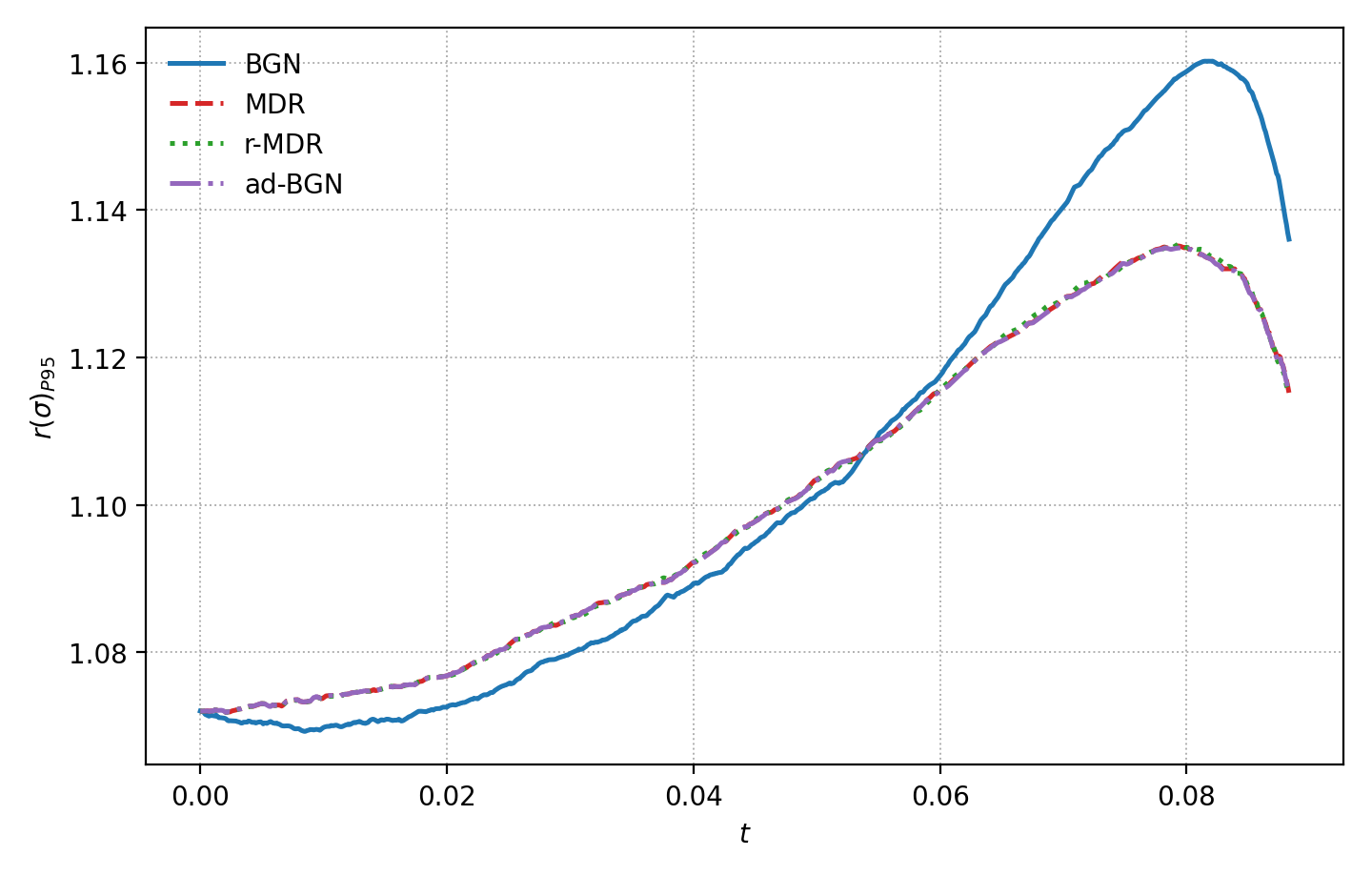}
  }%
  \subfloat[$\mathsf{r}_{\max}^m$]{
    \includegraphics[width=0.48\textwidth]{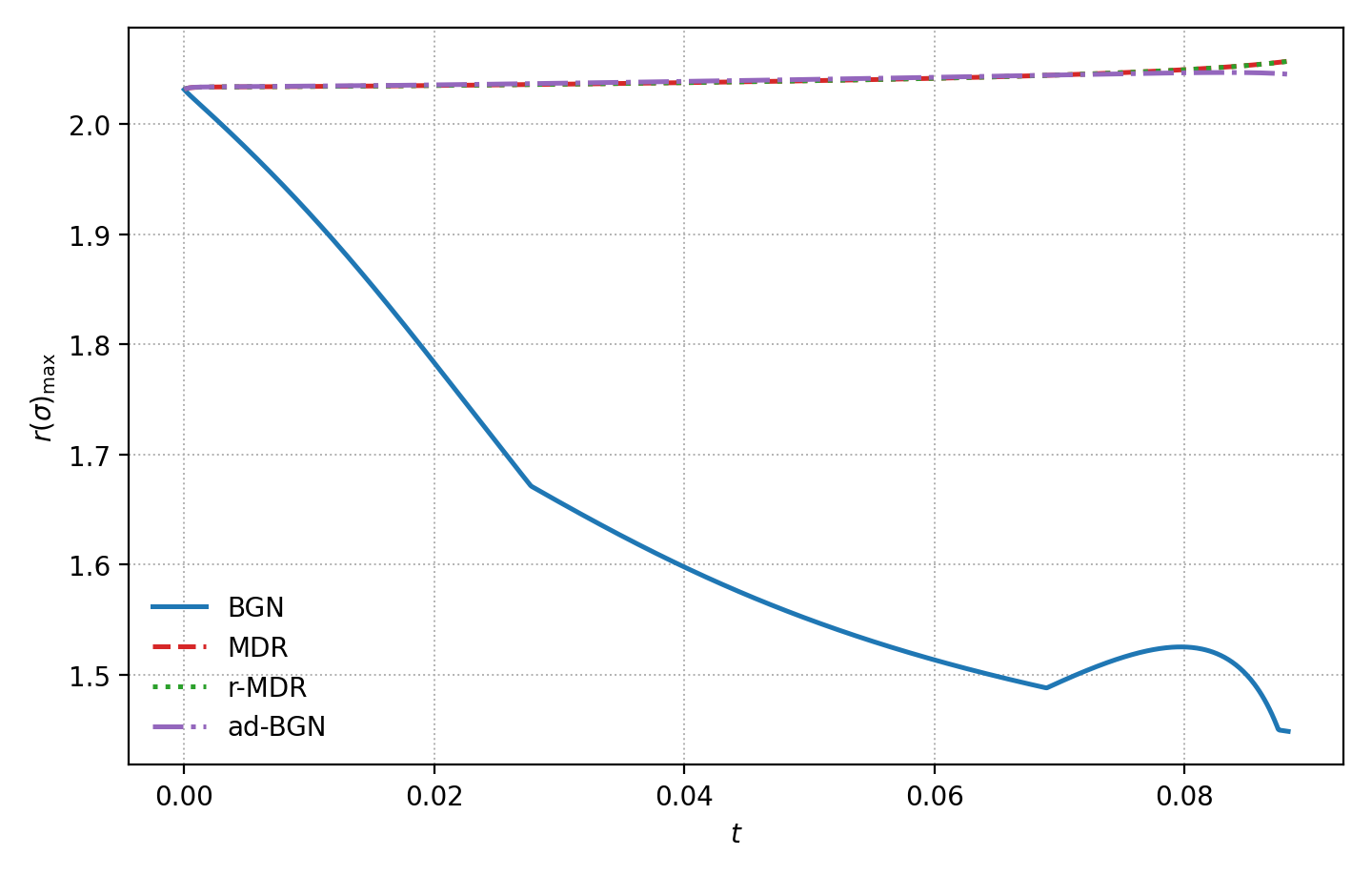}
  }
  \caption{Initial mesh, surface-area decay, $\mathsf{r}_{\rm p95}^m$, and
  $\mathsf{r}_{\max}^m$ in the thicker-neck dumbbell test.}
  \label{fig:exp53-area-quality}
\end{figure}

We then repeat the above experiment for a thicker-neck dumbbell by considering an
initial surface in~\eqref{eq:dumb} with $a=0.6$ and $b=0.4$. The discretization
parameters are $(J,K)=(6296,3150)$, and the time step size is again $\tau=10^{-4}$. For
this choice of parameters, the surface is expected to first become convex and then
shrink smoothly, without developing a topological pinch-off.

The numerical results are presented in~\Cref{fig:exp53-area-quality}, which shows
monotone area decay and acceptable mesh quality for all four schemes. The maximum
mesh-quality statistic is smallest for BGN, whereas the 95th-percentile statistic is
better controlled by MDR, r-MDR, and ad-BGN. Thus in this example BGN suppresses the
worst elements more effectively, while the other three schemes tend to improve the
quality of most elements.

\subsection{MCF of a perturbed torus}

\begin{figure}[!htbp]
  \centering
  \subfloat[Initial mesh]{
    \includegraphics[width=0.48\textwidth]{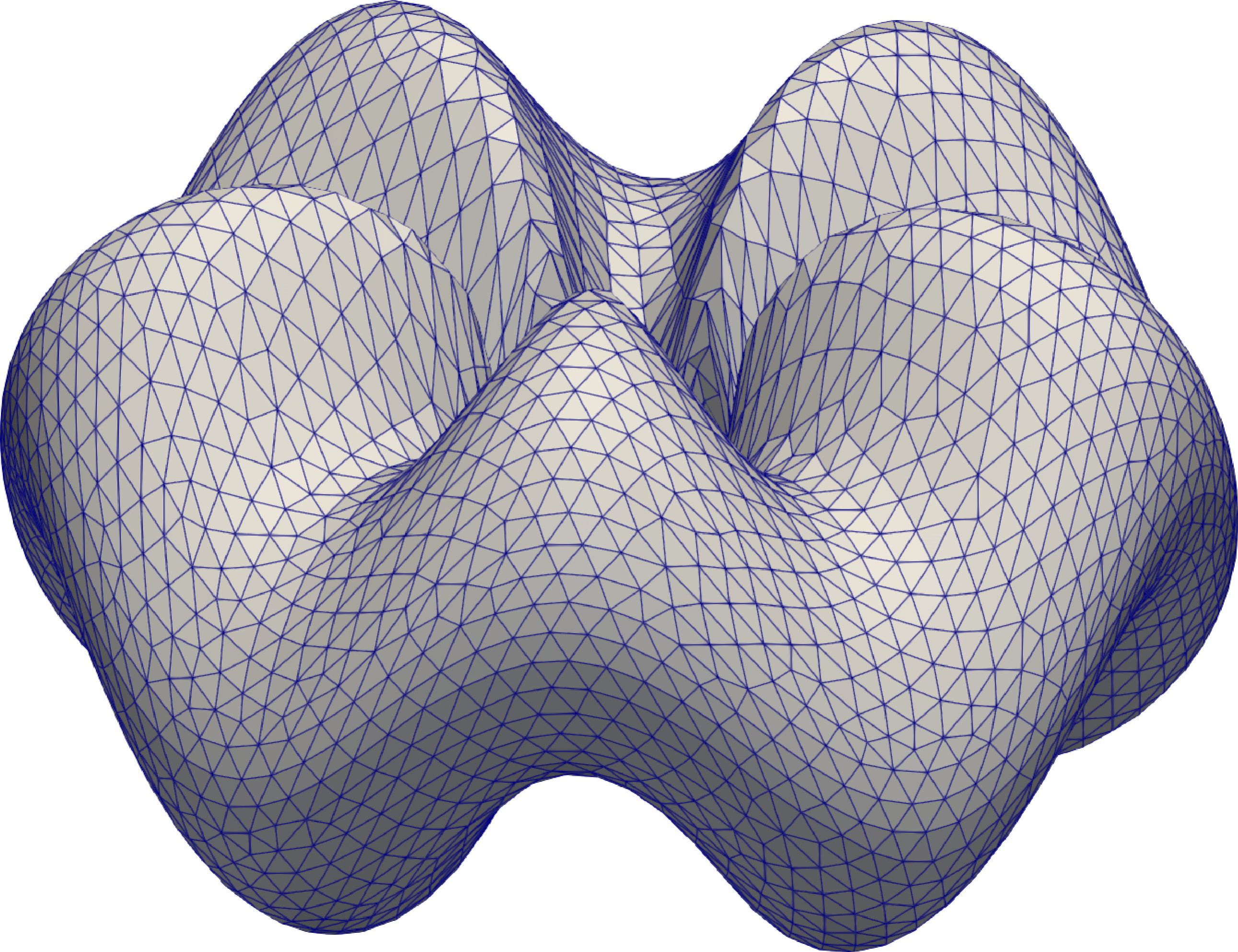}
  }
  \hfill
  \subfloat[Surface area]{
    \includegraphics[width=0.48\textwidth]{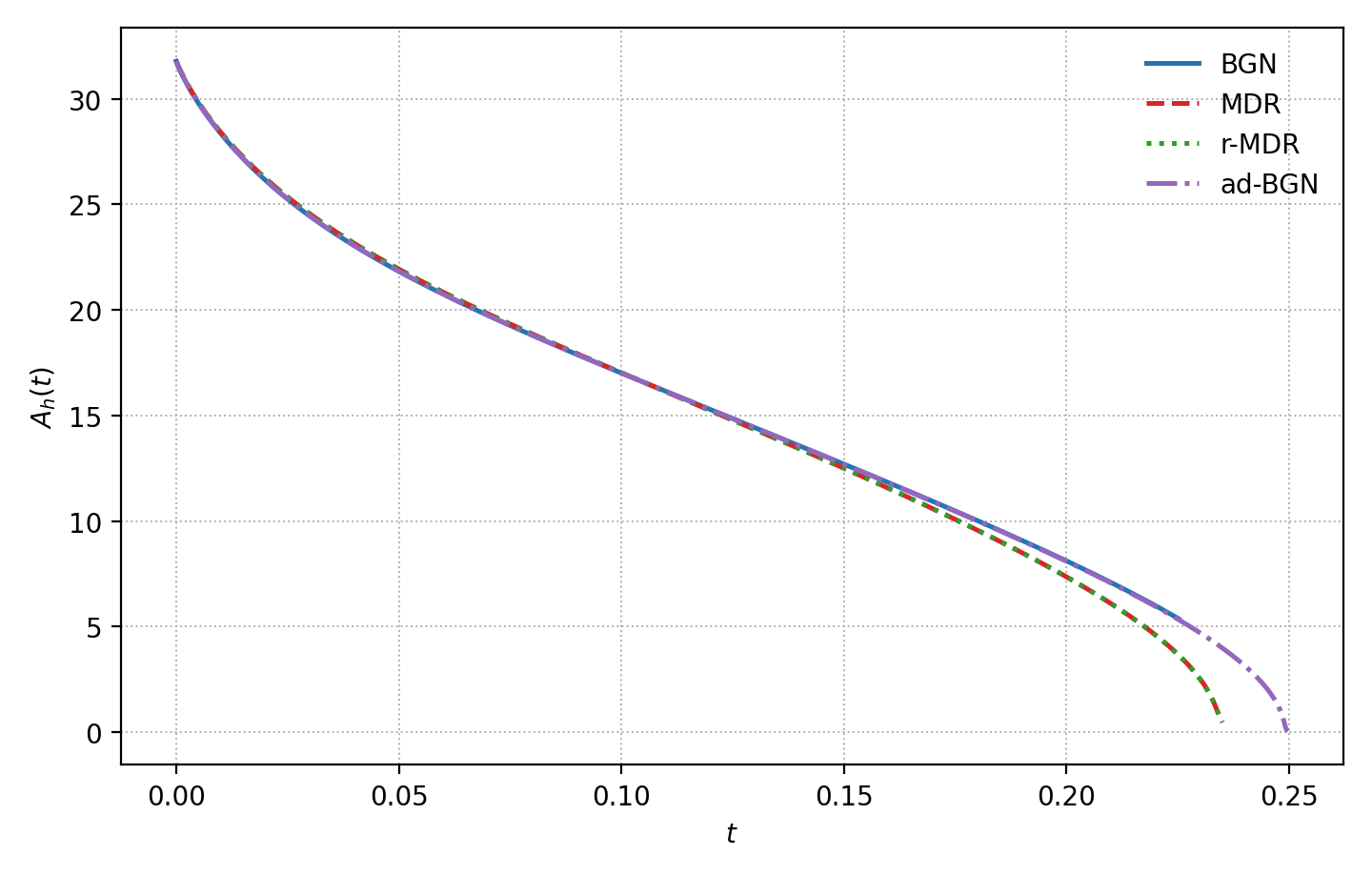}
  }
  \caption{Perturbed torus test: (a) the low-quality initial mesh and (b) the monotone decay of the surface area.}
  \label{fig:exp54-initial-area}
\end{figure}

\begin{figure}[!htbp]
  \centering
  \subfloat[BGN]{
    \includegraphics[width=0.4\textwidth]{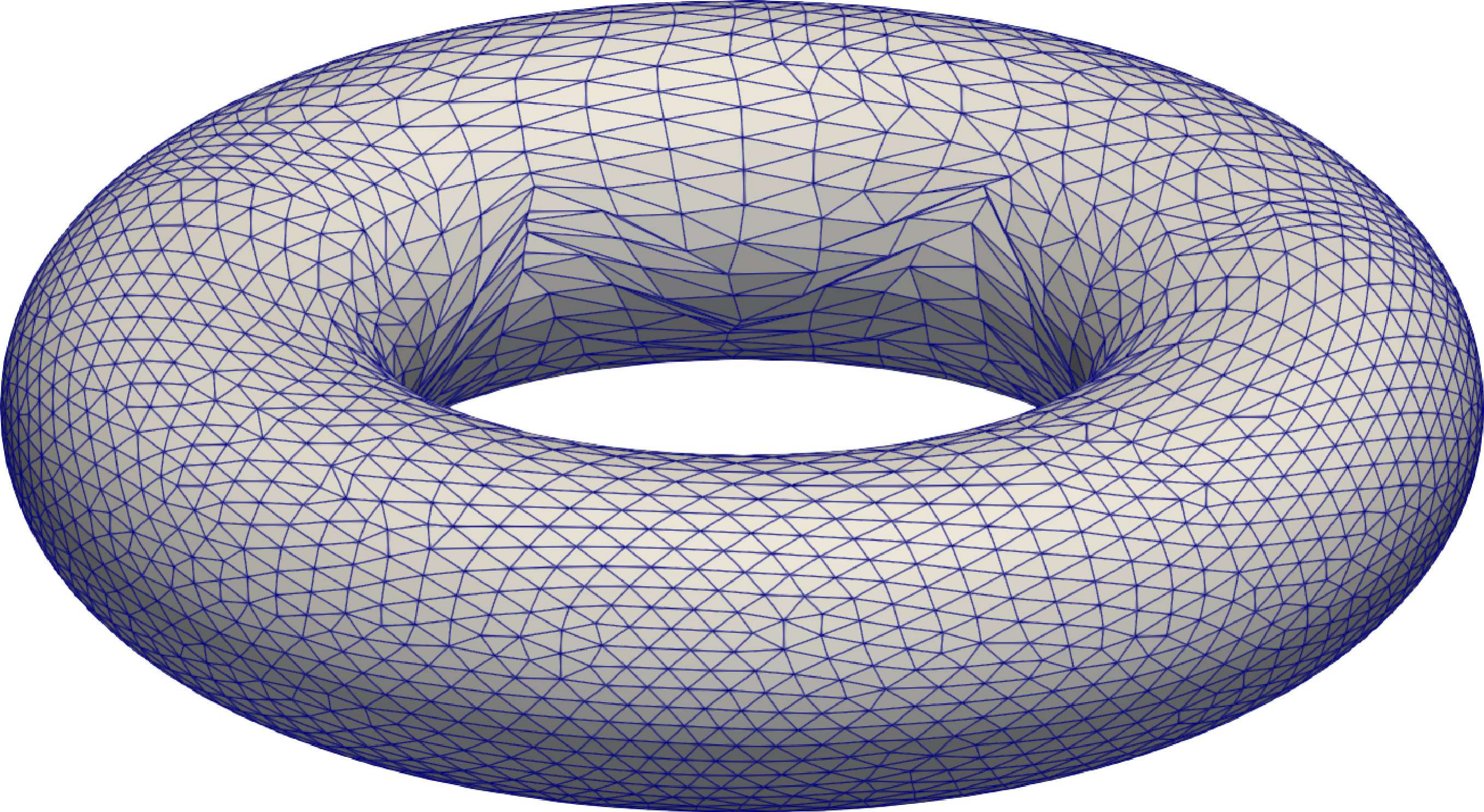}
  }
  \hspace{0.2em}
  \subfloat[ad-BGN]{
    \includegraphics[width=0.4\textwidth]{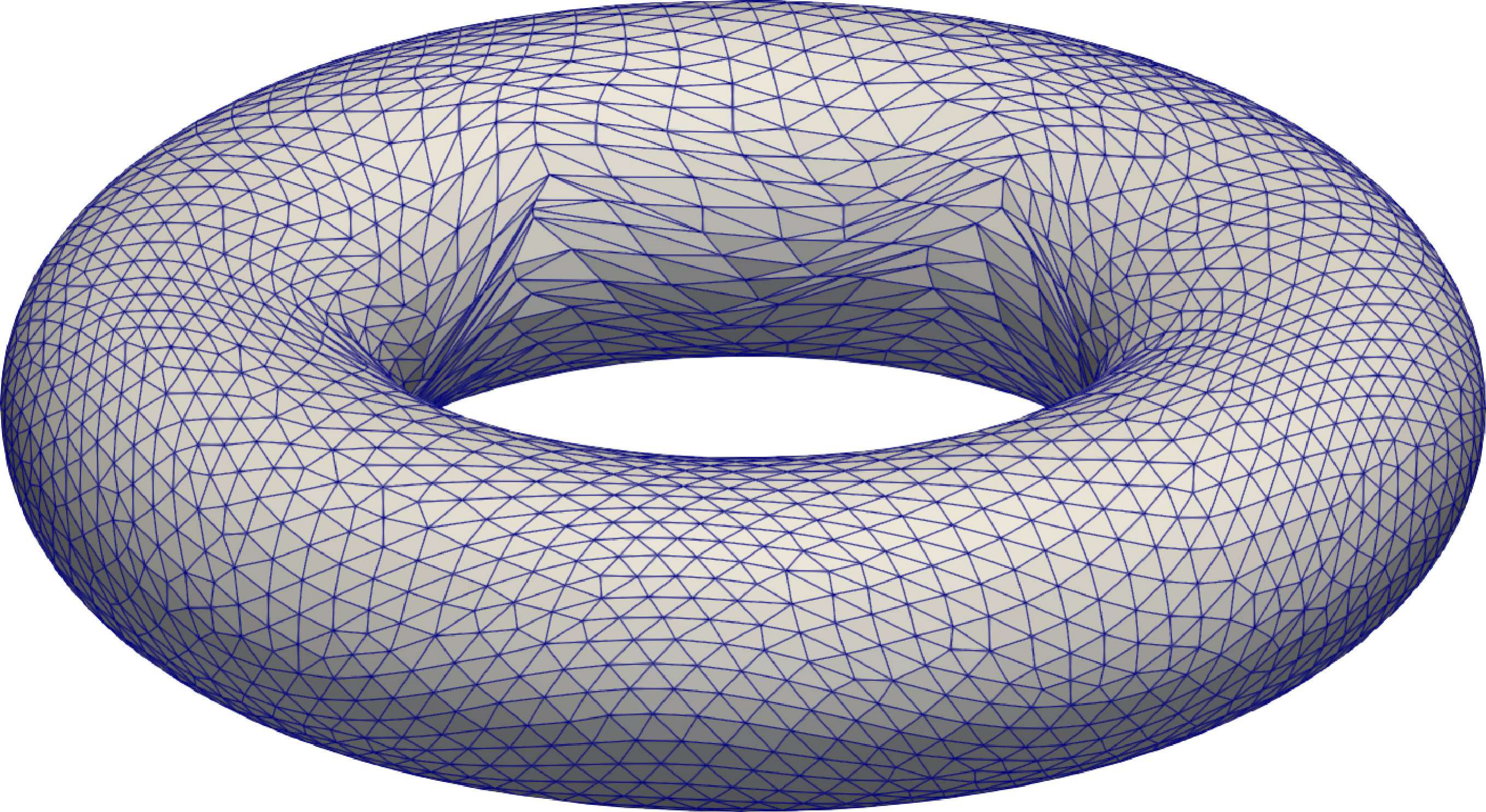}
  }
  \\[2ex]
  \subfloat[MDR]{
    \includegraphics[width=0.4\textwidth]{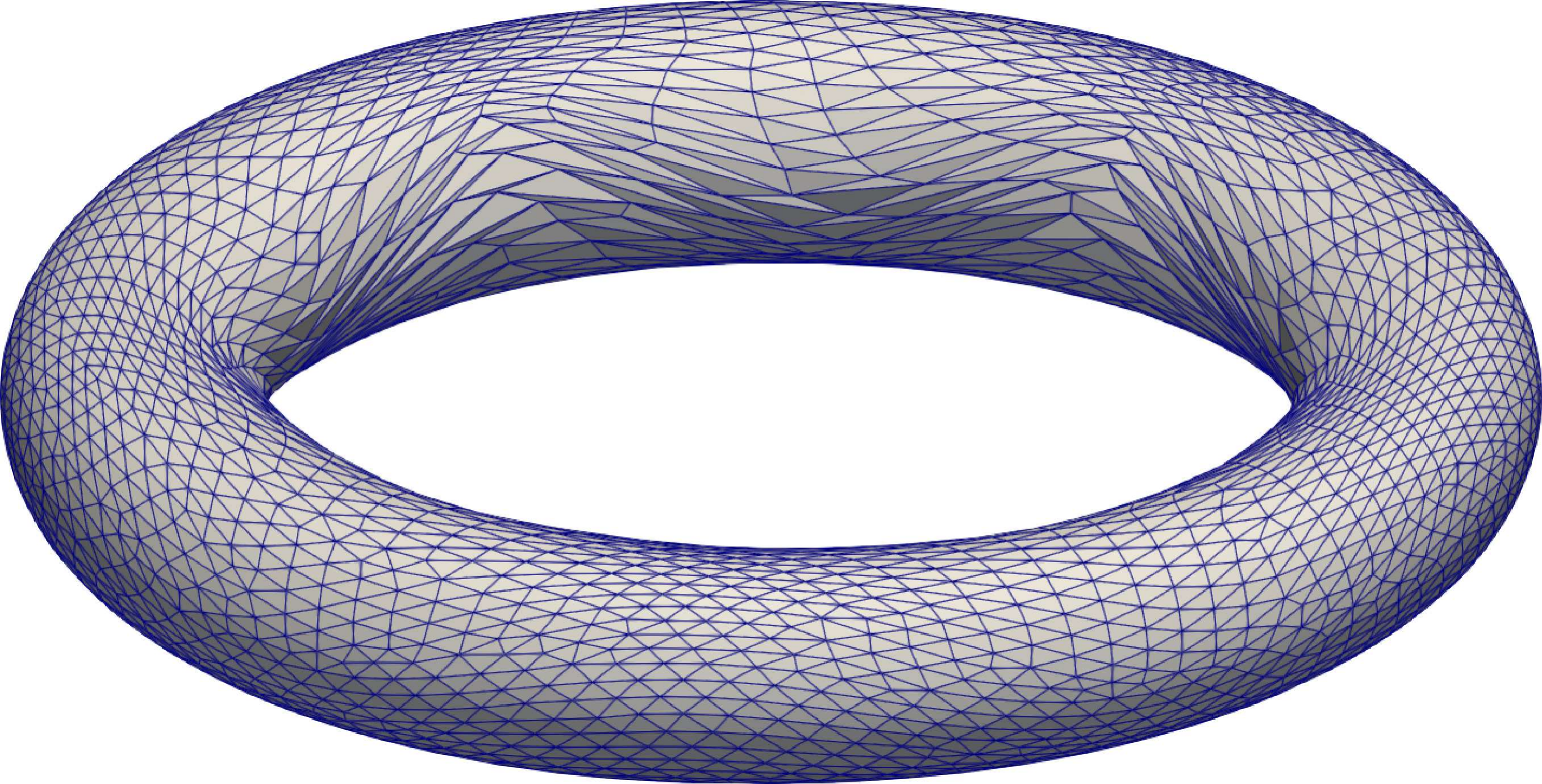}
  }
 \hspace{0.2em}
  \subfloat[r-MDR]{
    \includegraphics[width=0.4\textwidth]{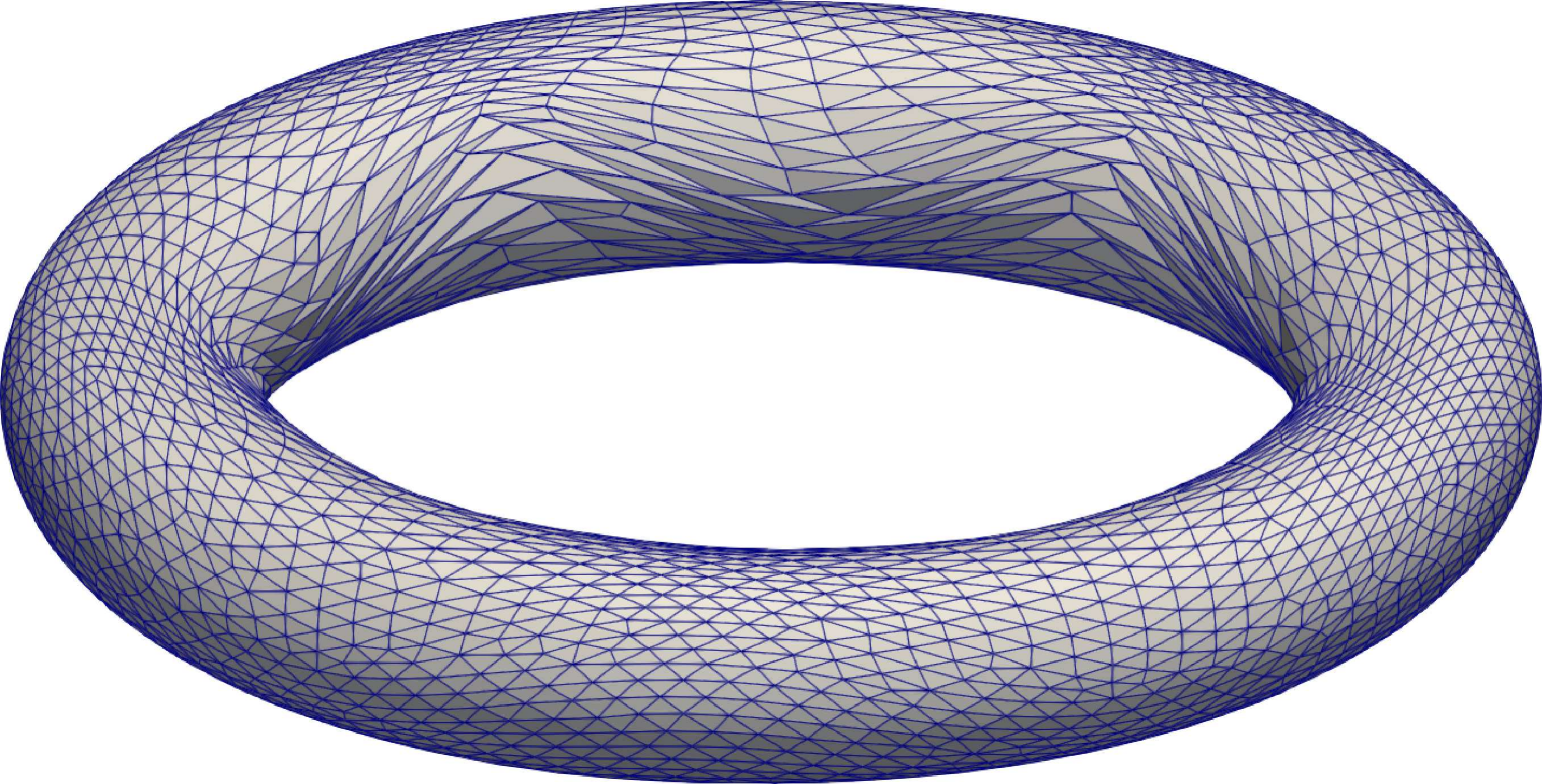}
  }
  \caption{The polyhedral surfaces at $t=0.2252$ in the perturbed torus test.}
  \label{fig:exp54-t02252}
\end{figure}

We next test the schemes on a strongly perturbed torus with a deliberately low-quality
initial mesh. The initial surface is parametrized by
\[
  \vec u_0(\theta,\varphi) = \begin{pmatrix}
    (1 + 0.65\cos\varphi)\cos\theta\\[0.2em]
    (1 + 0.65\cos\varphi)\sin\theta\\[0.2em]
    0.65\sin\varphi + 0.3\sin(5\theta)
  \end{pmatrix},
  \qquad \theta,\varphi\in[0,2\pi).
\]
The discretization parameters are $(J,K)=(6220,3110)$, and the time step size is fixed
at $\tau=10^{-4}$.

As illustrated in~\Cref{fig:exp54-initial-area}(b), all four schemes exhibit the
expected monotone decay of surface area, while significant differences appear after
approximately $t=0.15$. These differences are much clearer in~\Cref{fig:exp54-t02252},
where we visualize the polyhedral surfaces at
$t=0.2252$.  The comparison indicates that tangential redistribution affects
not only mesh quality but also the long-time discrete evolution. In particular, near the
late stage of extinction of the thin torus, the MDR scheme develops a visibly nonuniform
contraction, whereas the torus computed by the ad-BGN scheme shrinks more uniformly; see
\Cref{fig:exp54-near-zero}.

\begin{figure}[!htbp]
  \centering
  \subfloat[MDR, $t=0.2330$]{
    \includegraphics[width=0.30\textwidth]{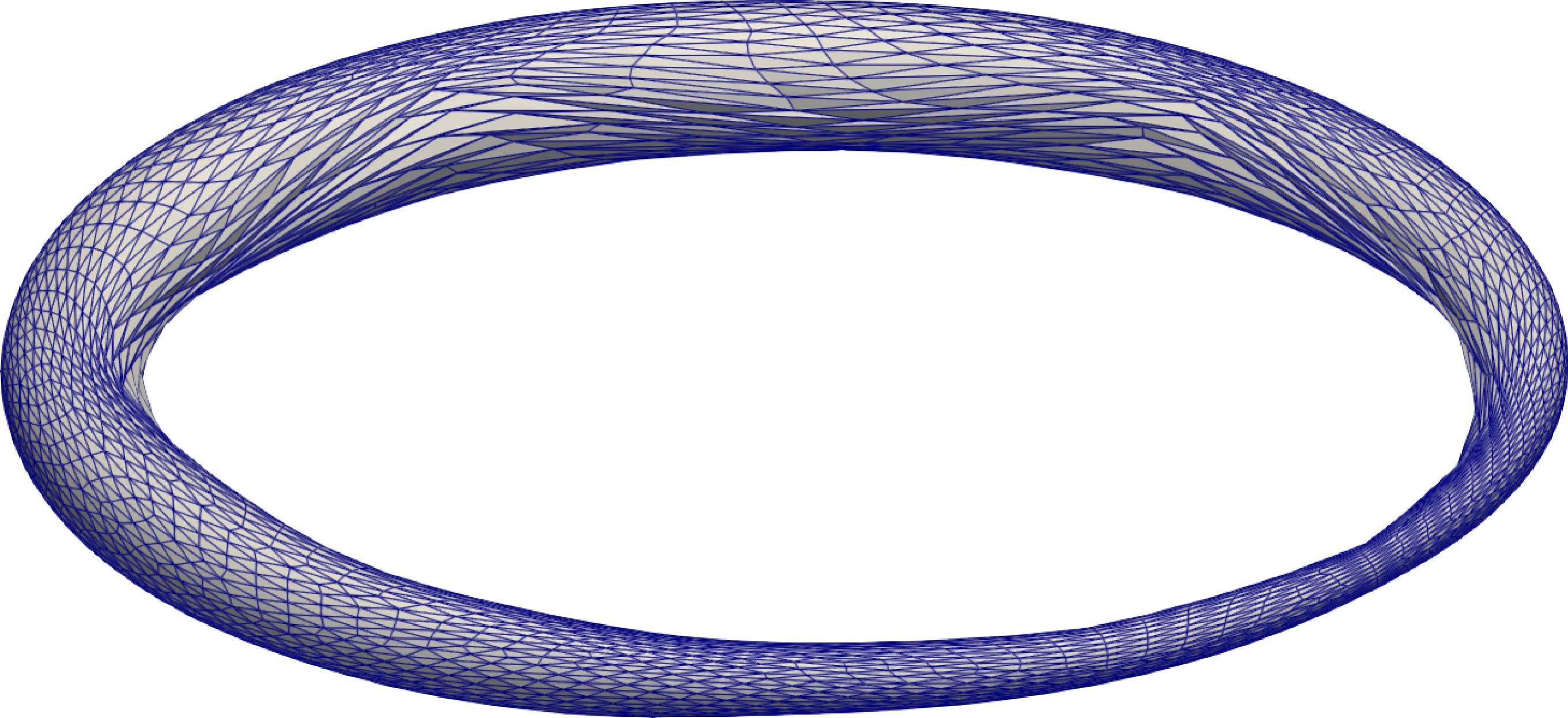}
  }
  \hfill
  \subfloat[MDR, $t=0.2340$]{
    \includegraphics[width=0.30\textwidth]{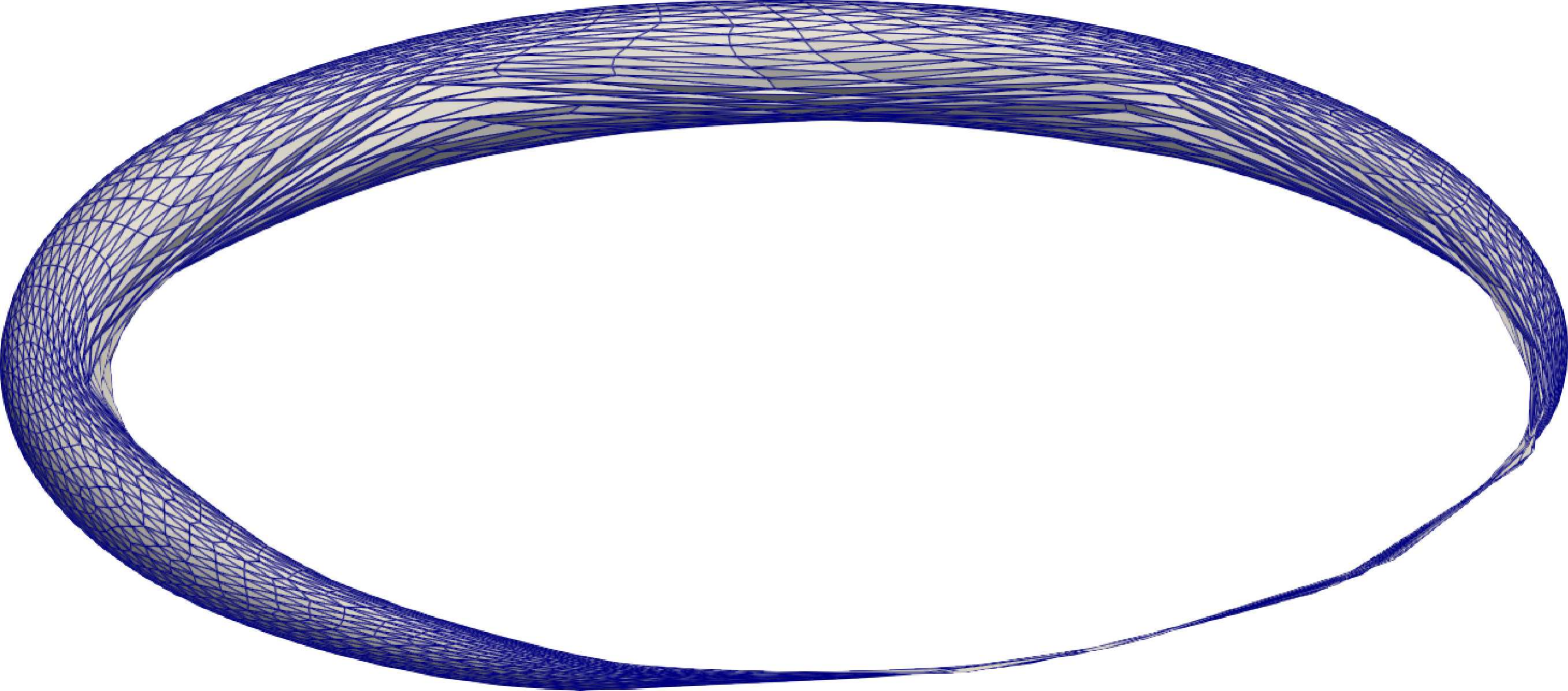}
  }
  \hfill
  \subfloat[MDR, $t=0.2350$]{
    \includegraphics[width=0.30\textwidth]{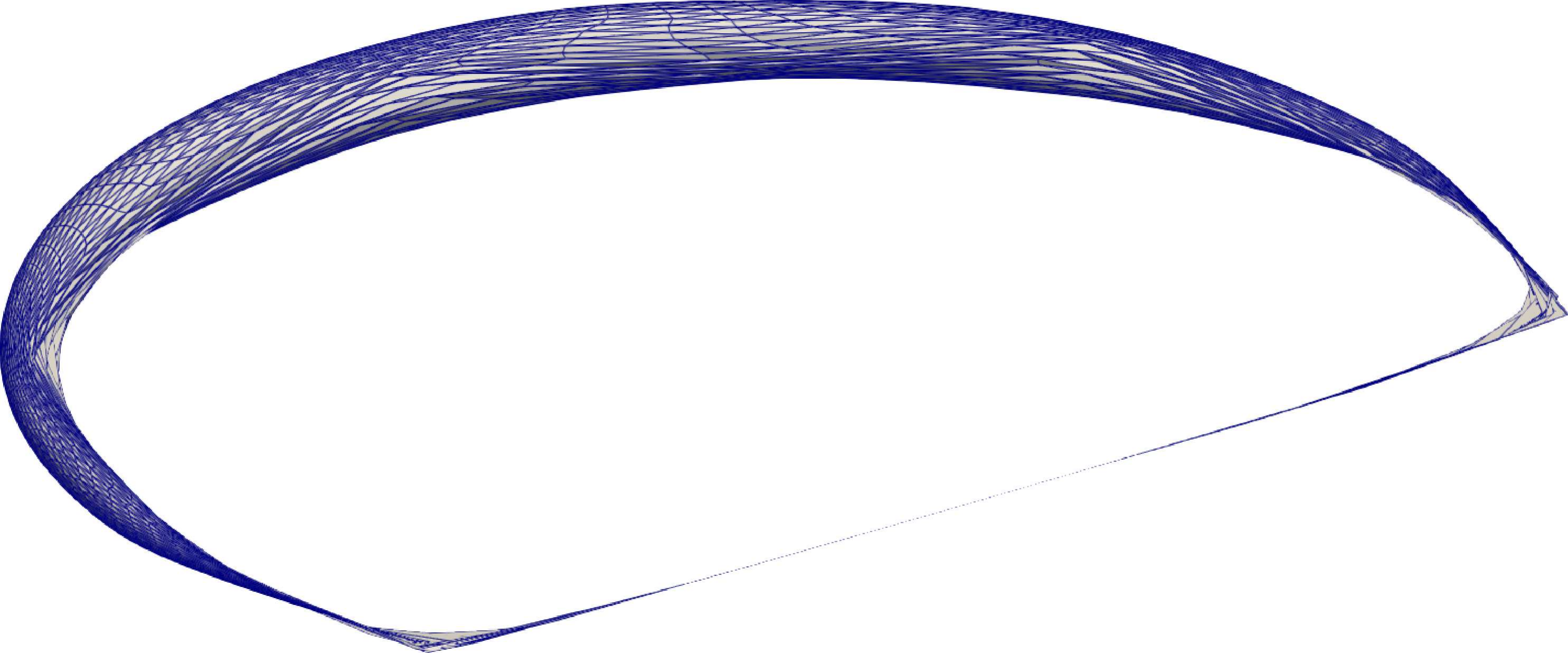}
  }\\[2ex]
  \subfloat[ad-BGN, $t=0.2476$]{
    \includegraphics[width=0.30\textwidth]{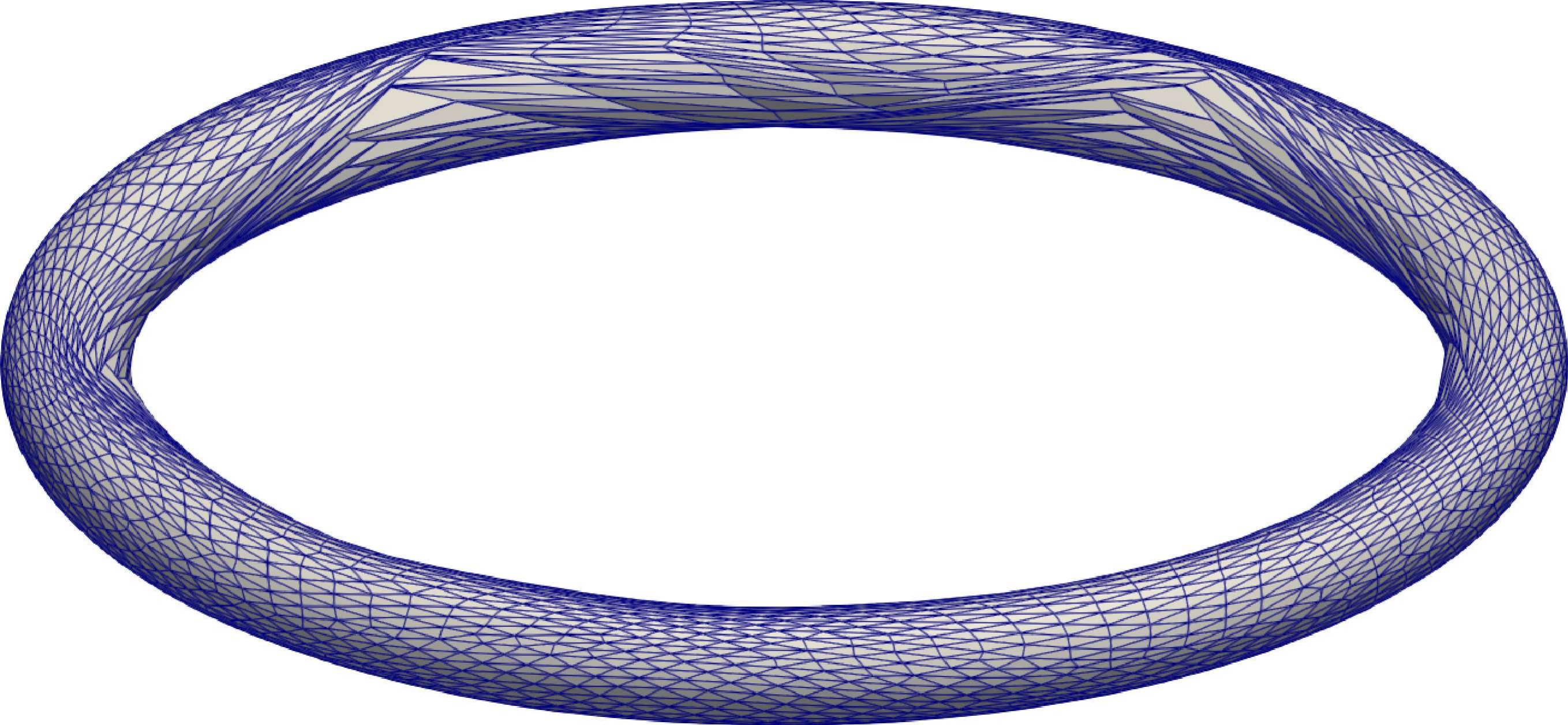}
  }
  \hfill
  \subfloat[ad-BGN, $t=0.2486$]{
    \includegraphics[width=0.30\textwidth]{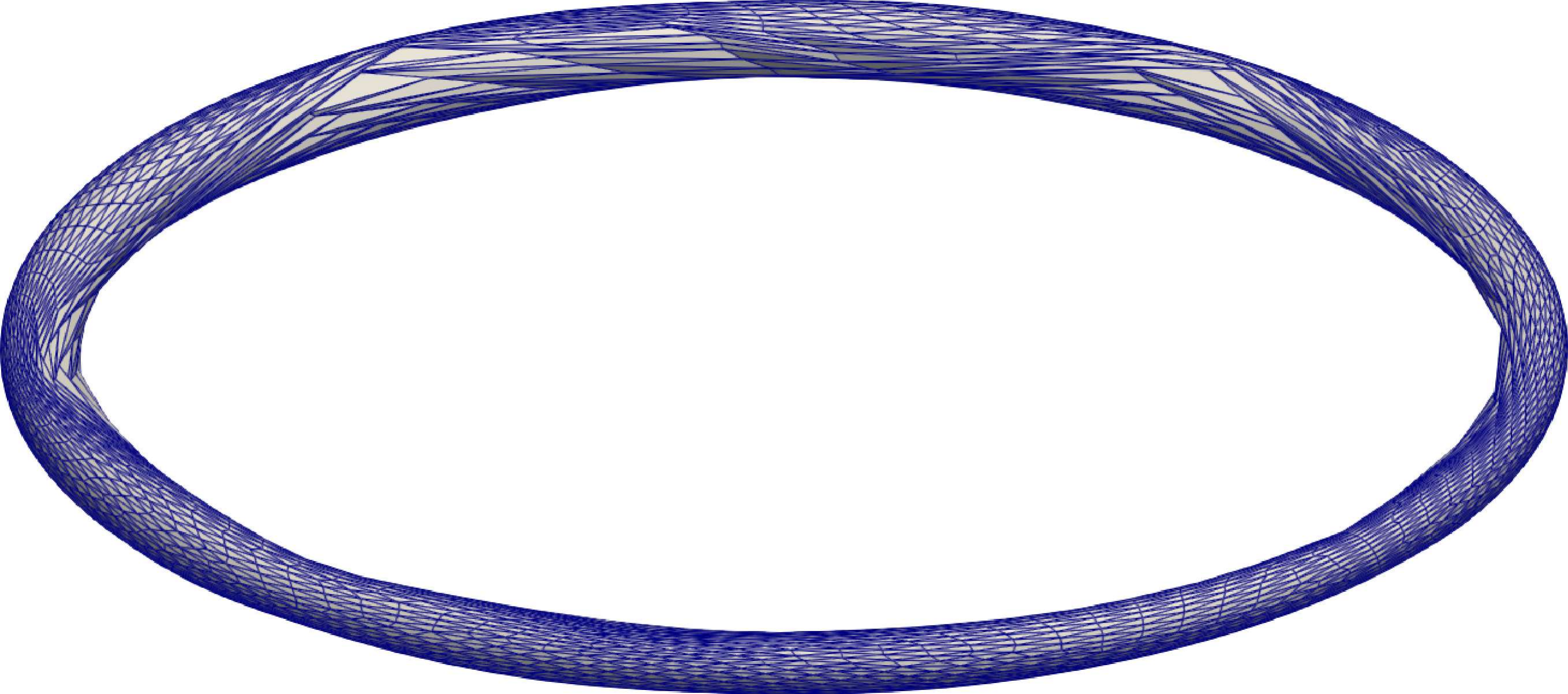}
  }
  \hfill
  \subfloat[ad-BGN, $t=0.2496$]{
    \includegraphics[width=0.30\textwidth]{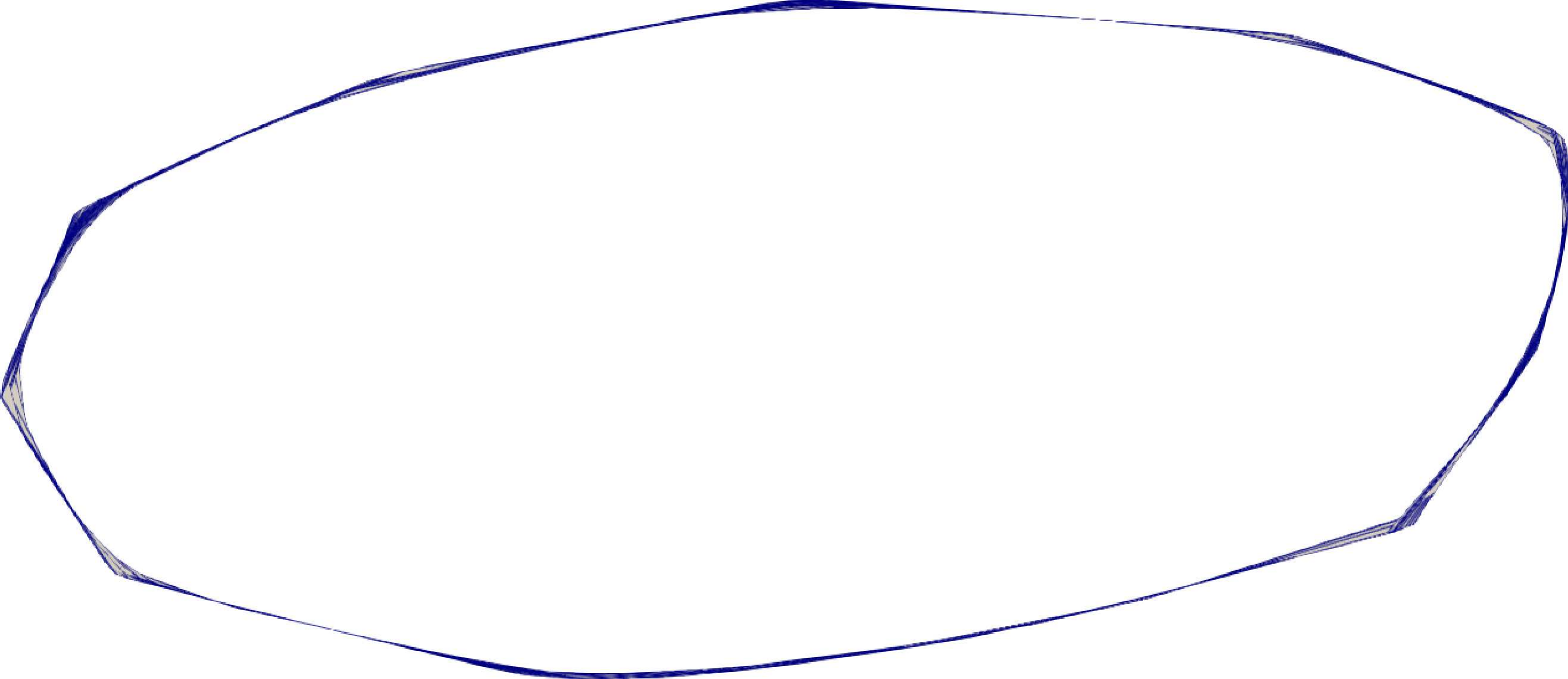}
  }
  \caption{Late-stage evolution in the perturbed torus test.  Top row: MDR.  Bottom row: ad-BGN.}
  \label{fig:exp54-near-zero}
\end{figure}

\subsection{Surface diffusion of an elongated cuboid}

In this example, we consider the evolution of an $8\times1\times1$ cuboid under surface
diffusion. This is a standard benchmark for neck formation and pinch-off. We compare how
the four schemes maintain mesh nondegeneracy using a coarse time step $\tau=10^{-3}$ and
a fine time step $\tau=10^{-4}$, with the spatial discretization fixed at
$(J,K)=(1886,945)$.

\begin{figure}[htbp]
  \centering
  \subfloat[Initial mesh]{
    \includegraphics[width=0.48\textwidth]{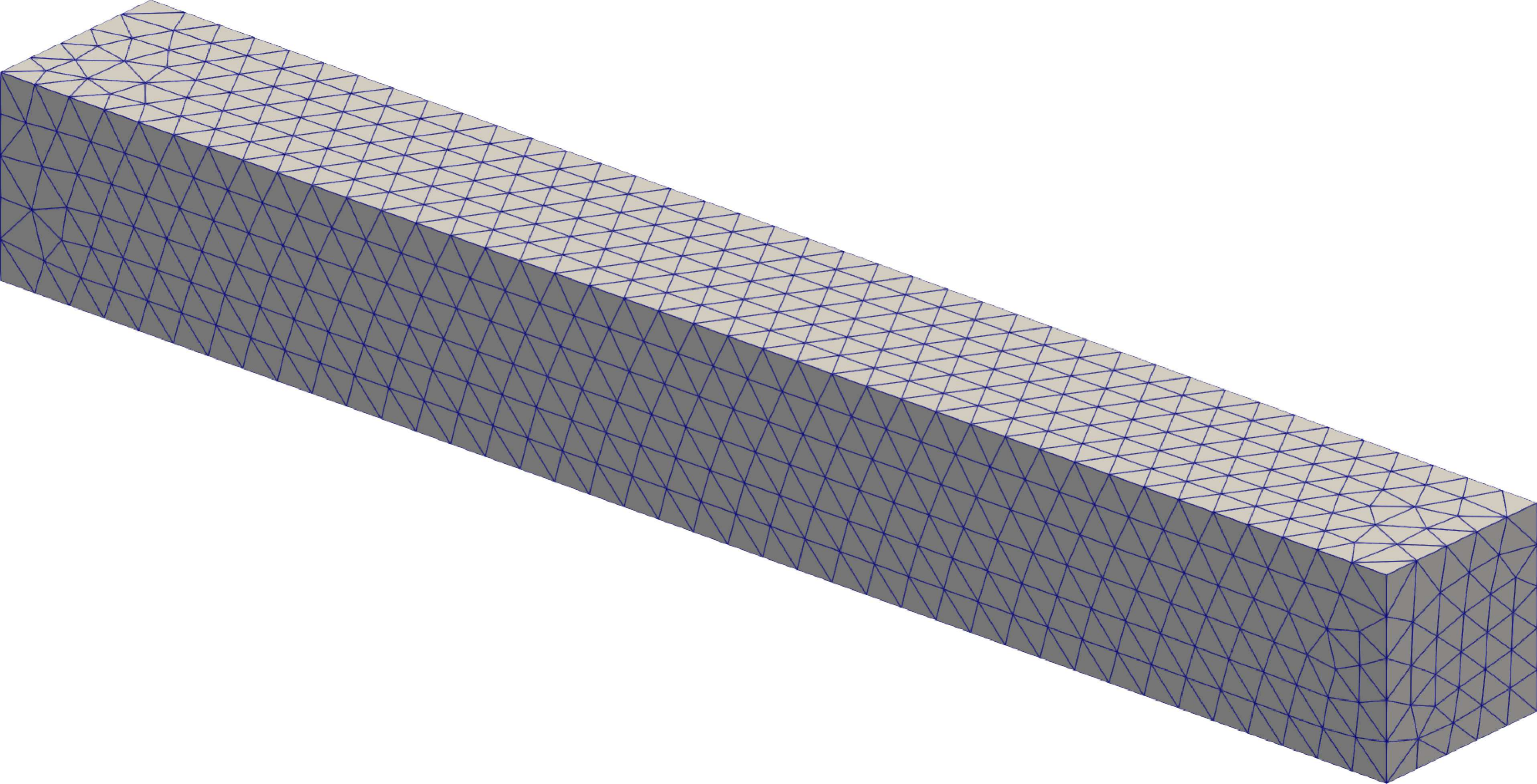}
  }
  \hfill
  \subfloat[Surface area]{
    \includegraphics[width=0.48\textwidth]{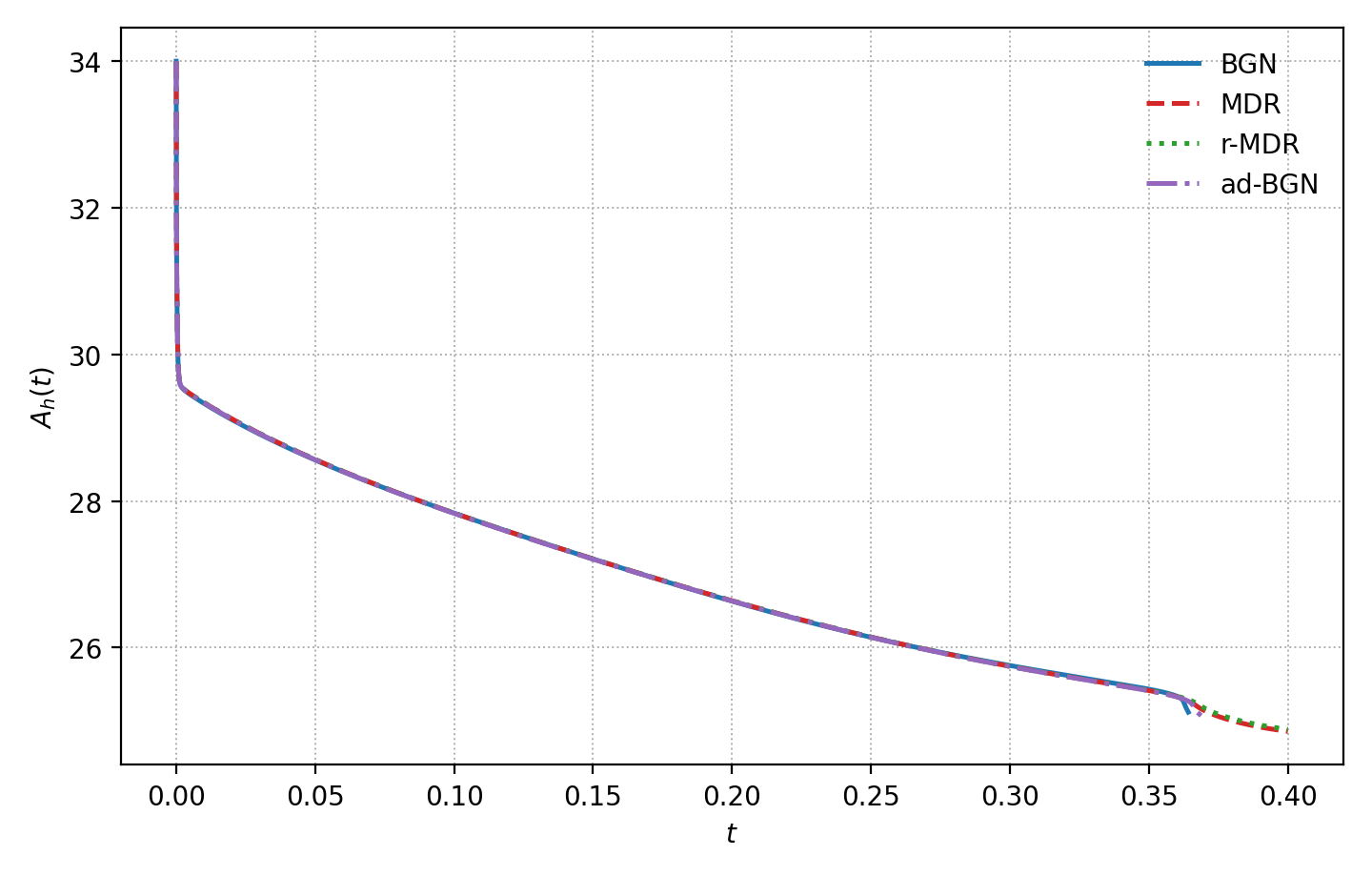}
  }
  \caption{Initial mesh and surface-area decay in the elongated-cuboid test
  under surface diffusion flow.}
  \label{fig:exp55-initial-area}
\end{figure}

The numerical results for the coarse time step are presented in
\Cref{fig:exp55-dt1e3-t0366}. As the cuboid evolves toward pinch-off, the standard BGN
scheme eventually fails because of mesh deterioration at
$t=0.366$ (see also~\cite[Figure 24]{BGN08parametric}).  The polyhedral
surfaces at $t=0.369$ for the other three schemes (ad-BGN, MDR, and r-MDR) are shown for
comparison.

\begin{figure}[htbp]
  \centering
  \subfloat[BGN: $t=0.366$]{
    \includegraphics[width=0.42\textwidth]{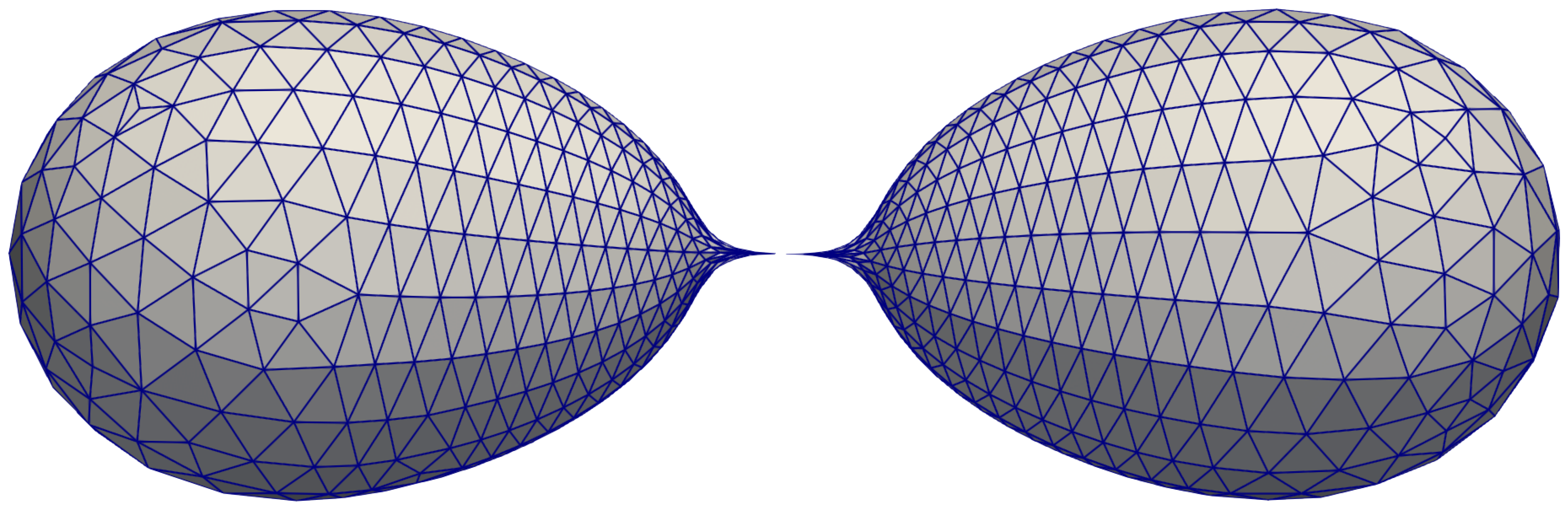}
  }
  \hspace{1cm}
  \subfloat[ad-BGN: $t=0.369$]{
    \includegraphics[width=0.42\textwidth]{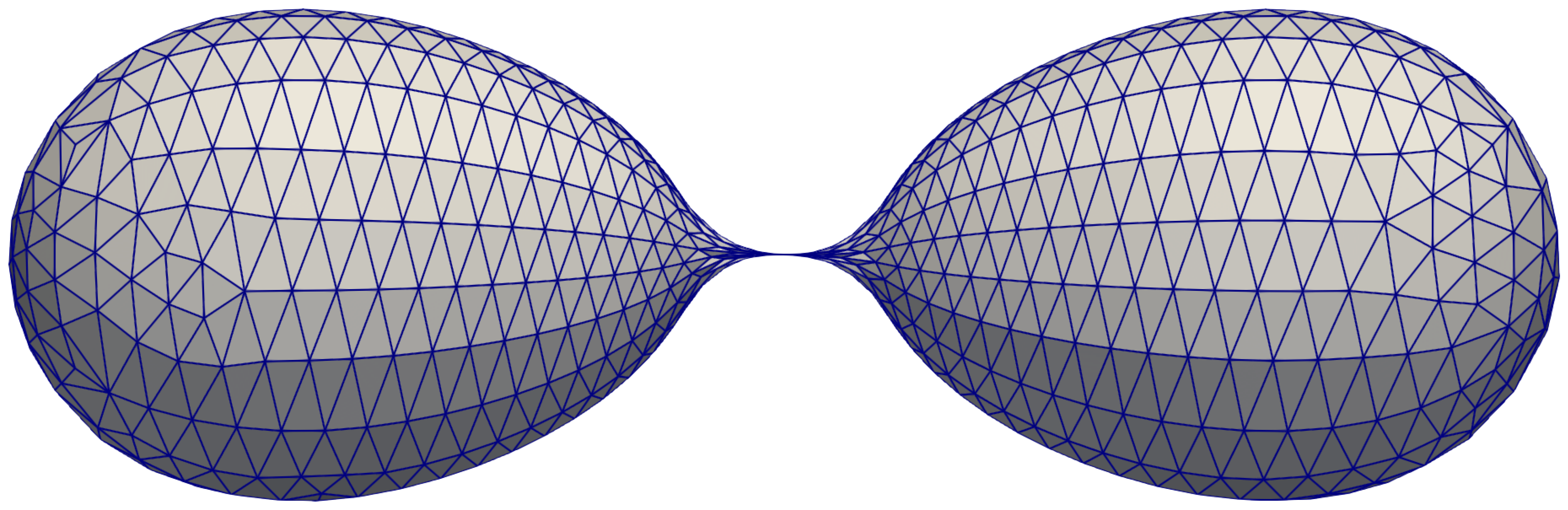}
  }
  \\[2ex]
  \subfloat[MDR: $t=0.369$]{
    \includegraphics[width=0.42\textwidth]{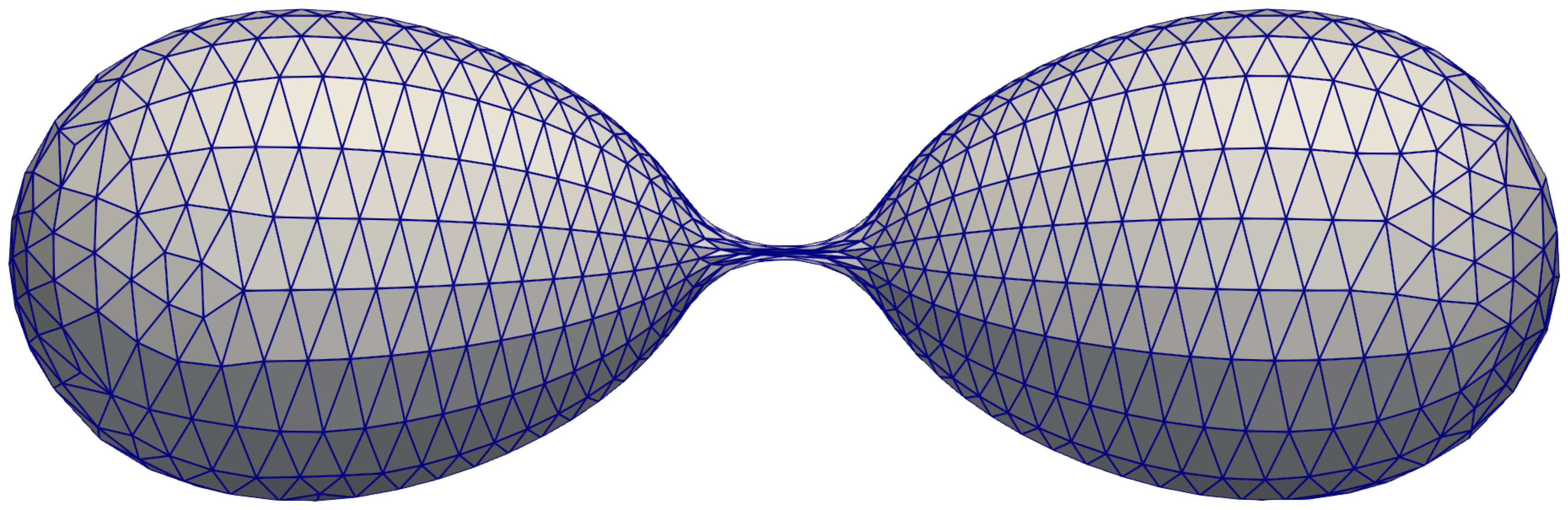}
  }
  \hspace{1cm}
  \subfloat[r-MDR: $t=0.369$]{
    \includegraphics[width=0.42\textwidth]{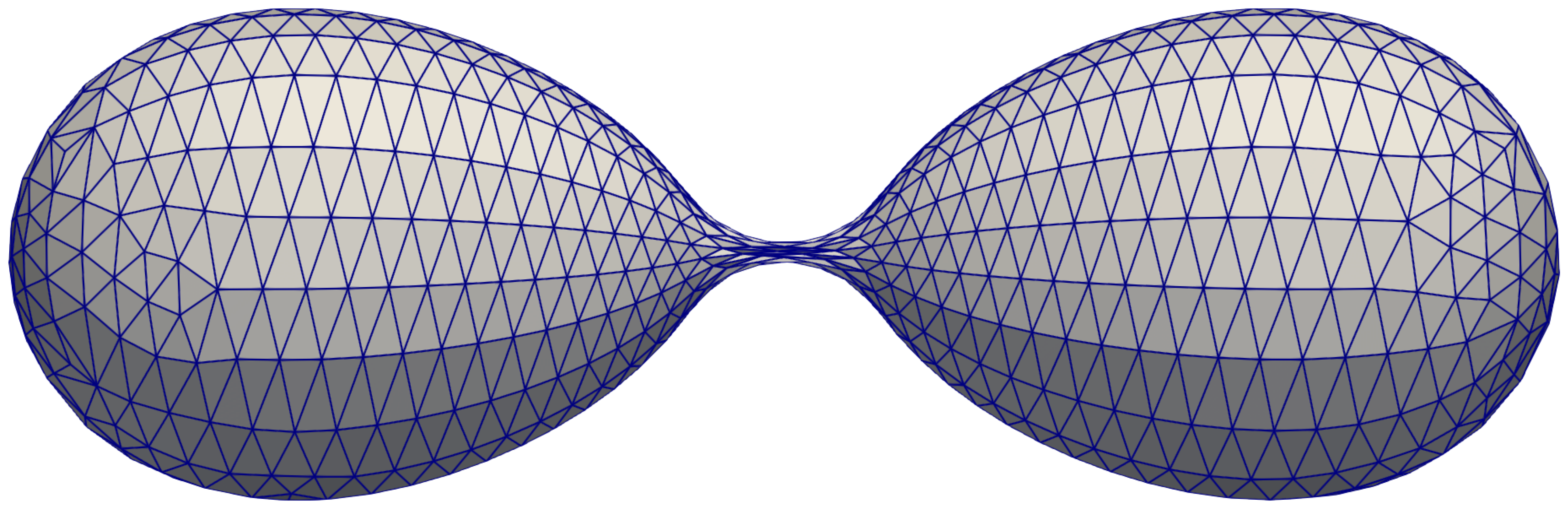}
  }
  \caption{Polyhedral surfaces near the necking regime computed with the
  coarse time step $\tau=10^{-3}$.}
  \label{fig:exp55-dt1e3-t0366}
\end{figure}

Numerical results for the fine time step $\tau=10^{-4}$ are presented in
\Cref{fig:exp55-dt1e4}. Similar to the coarse-time-step computation, BGN stops at
$t=0.3640$ with a severely deteriorated surface mesh, whereas the other three
schemes remain computable with nondegenerate meshes into later stages. The schemes also
exhibit small differences in the apparent necking speed: r-MDR with $\alpha=10$ evolves
slightly more slowly than MDR, although it approaches MDR as $\alpha$ is increased. The
ad-BGN evolution is closer to that of the original BGN scheme. For the fine time-step
computation,~\Cref{tab:exp55-volume-loss} reports the relative enclosed-volume loss for
each scheme.

\begin{figure}[htbp]
  \centering
  \subfloat[BGN, $t=0.3640$]{
    \includegraphics[width=0.42\textwidth]{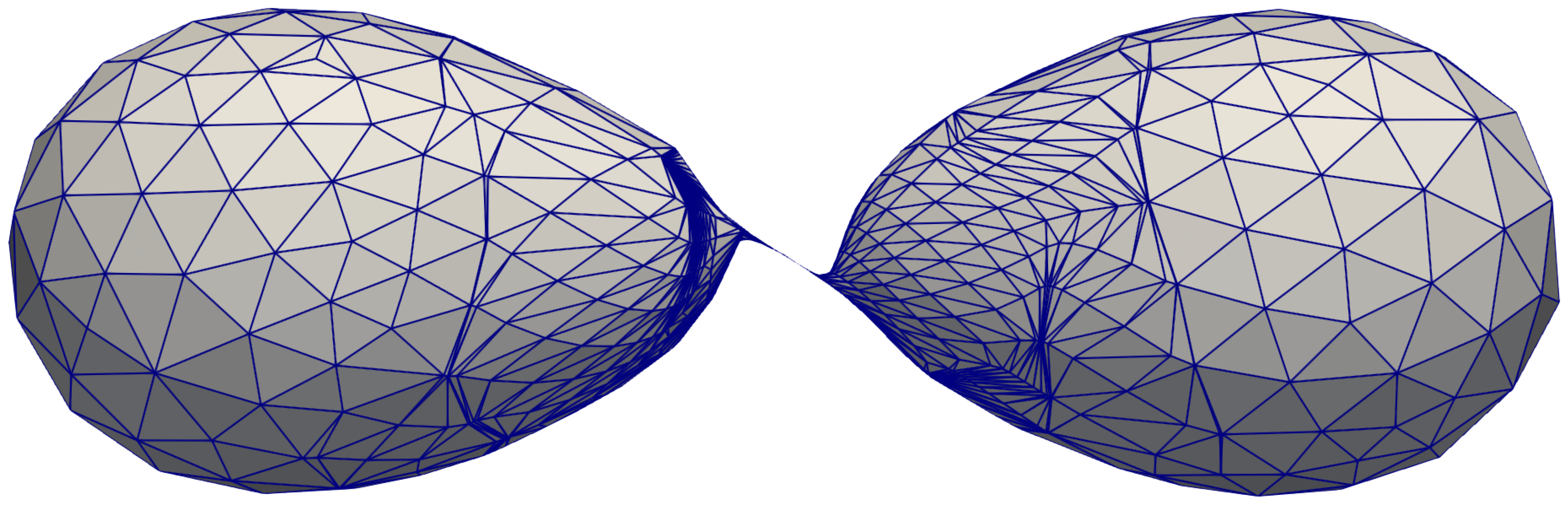}
  }
 \hspace{1cm}
  \subfloat[ad-BGN, $t=0.3690$]{
    \includegraphics[width=0.42\textwidth]{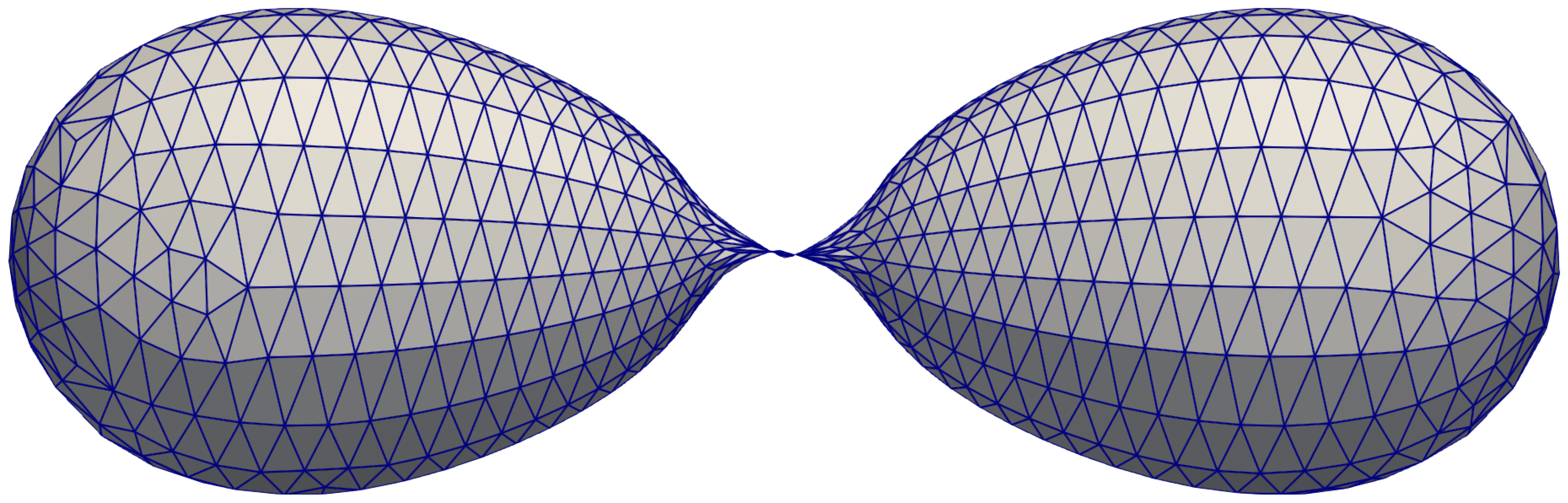}
  }
   \\[2ex]
  \subfloat[MDR, $t=0.3690$]{
    \includegraphics[width=0.42\textwidth]{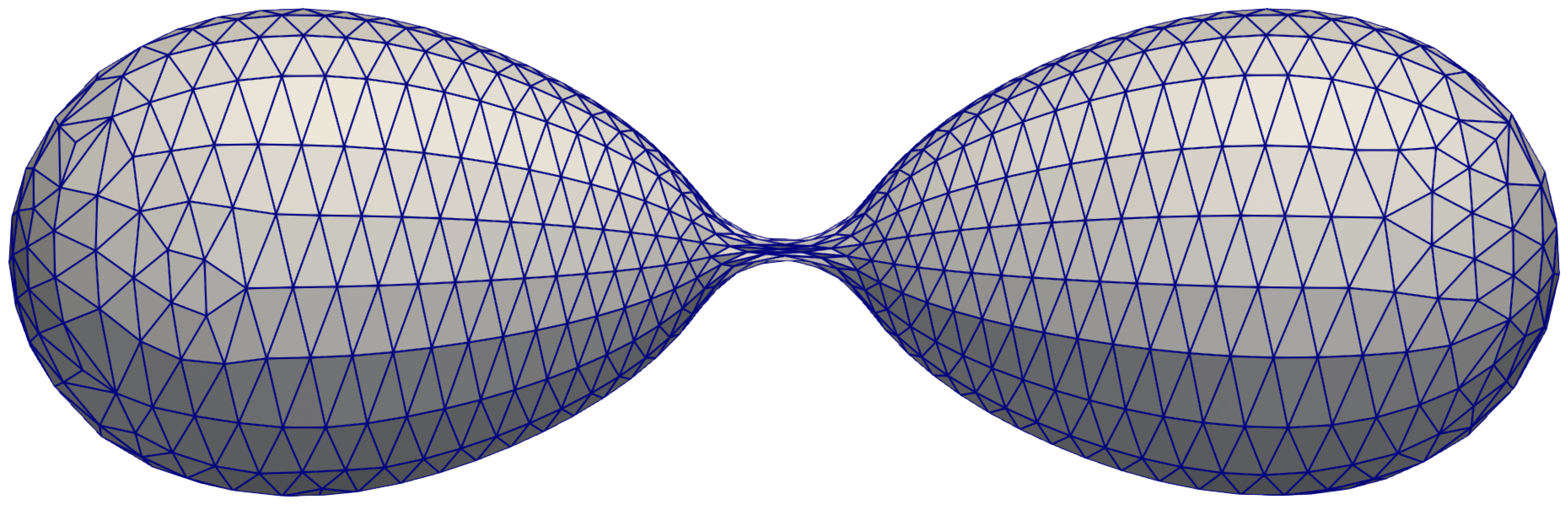}
  }
  \hspace{1cm}
  \subfloat[r-MDR, $t=0.3690$]{
    \includegraphics[width=0.42\textwidth]{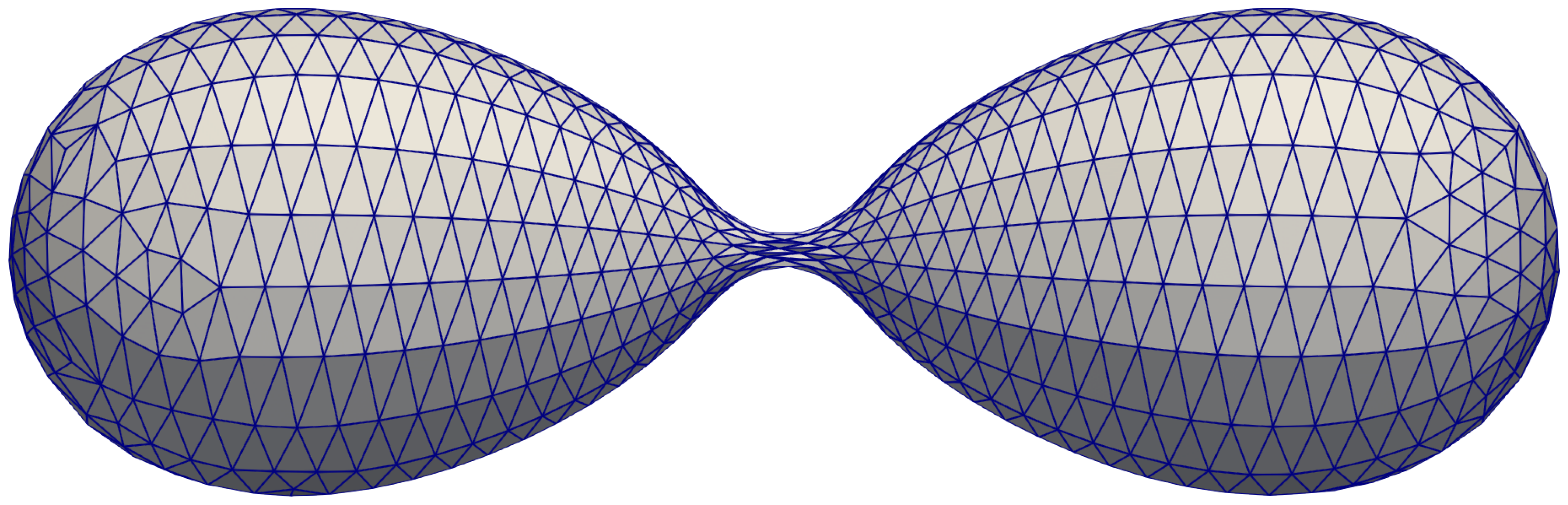}
  }
  \caption{Polyhedral surfaces near the necking regime computed with the
  fine time step $\tau=10^{-4}$.}
  \label{fig:exp55-dt1e4}
\end{figure}

The pinch-off benchmarks show that admissible tangential motions can lead to
substantially different mesh redistributions. In a fixed-connectivity computation, these
redistributions affect how long the computation can be continued near a topological
singularity. Resolving the pinch-off itself is beyond the scope of the present direct
parametric approximations; an actual topological change would require an additional
surgery or remeshing step.

\begin{table}[htbp]
\centering
\small
\caption{The maximum relative volume loss in the elongated-cuboid test
with $\tau=10^{-4}$.}
\label{tab:exp55-volume-loss}
\begin{tabular}{cccc}
\toprule
BGN & ad-BGN & MDR & r-MDR \\
\midrule
$7.8332\mathrm{E}{-3}$ & $7.9832\mathrm{E}{-3}$
& $7.8342\mathrm{E}{-3}$ & $7.8346\mathrm{E}{-3}$ \\
\bottomrule
\end{tabular}
\end{table}

\subsection{Surface diffusion of a cross-shaped surface}

We finally consider the evolution of an initial closed cross-shaped surface under
surface diffusion. Here the surface is given by a unit cuboid with four equal attached
limbs, each a $1\times 3\times 1$ cuboid, see~\Cref{fig:exp56-area-quality}. This
example tests the schemes on a more complex geometry with several concave regions and
thin connecting parts. The discretization parameters are
$(J,K)=(2240,1122)$, and the time step size is fixed at $\tau=10^{-3}$.  The
computation is carried out up to $T=1.5$.

\begin{figure}[htbp]
  \centering
  \subfloat[Initial mesh]{
    \includegraphics[width=0.48\textwidth]{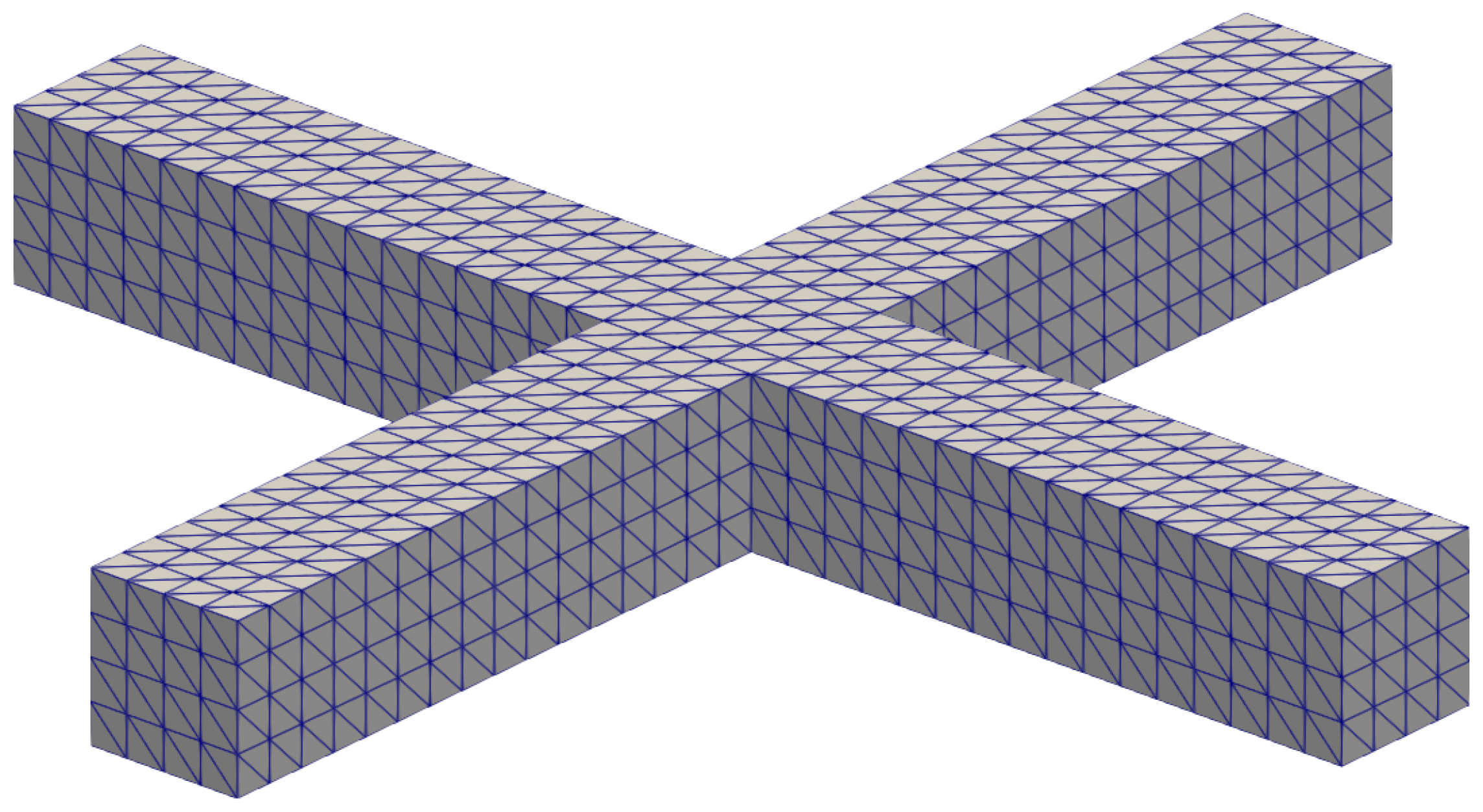}
  }%
  \subfloat[Surface area]{
    \includegraphics[width=0.48\textwidth]{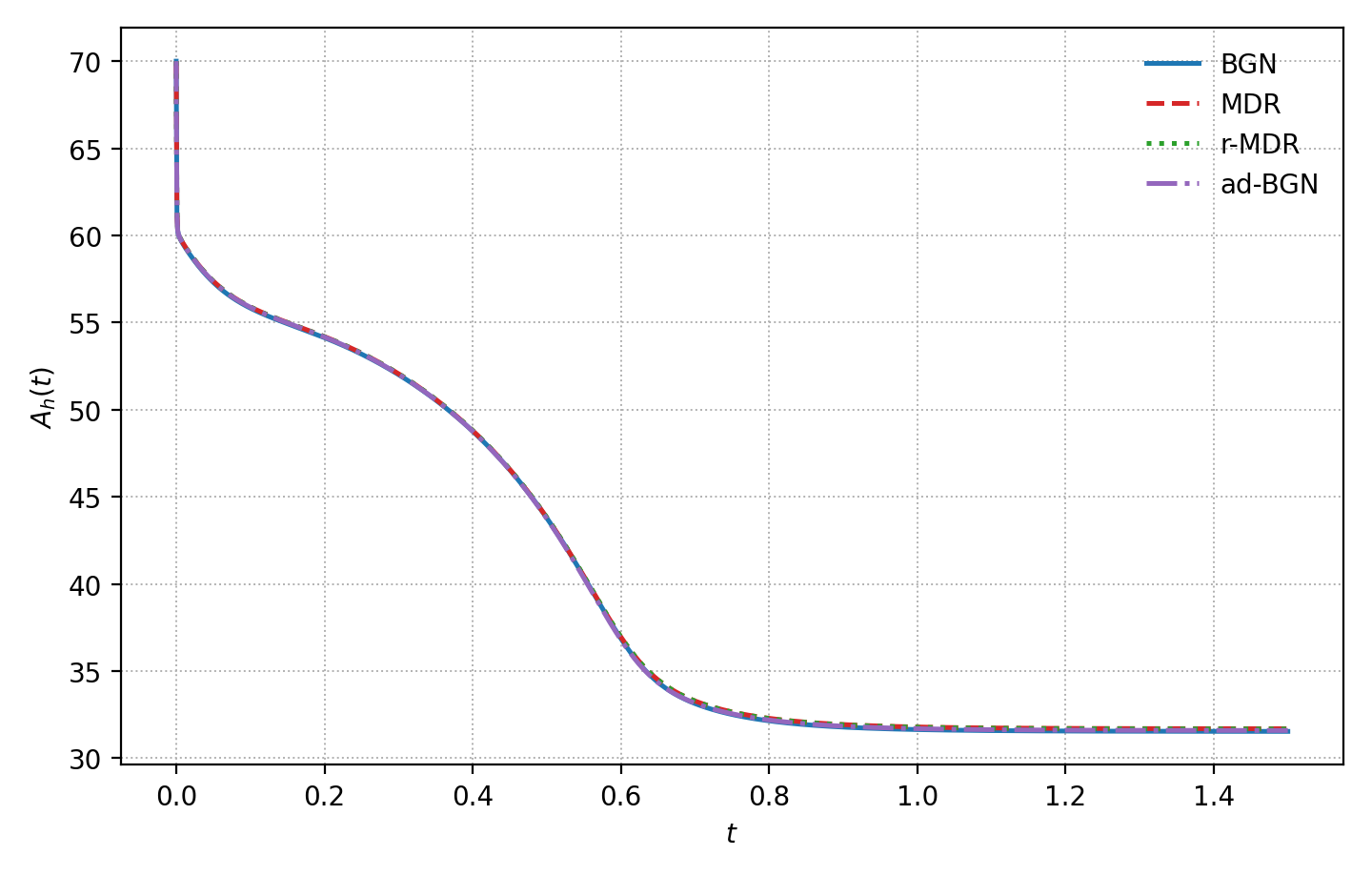}
  }\\[0pt]%
  \subfloat[$\mathsf{r}_{\max}^m$]{
    \includegraphics[width=0.48\textwidth]{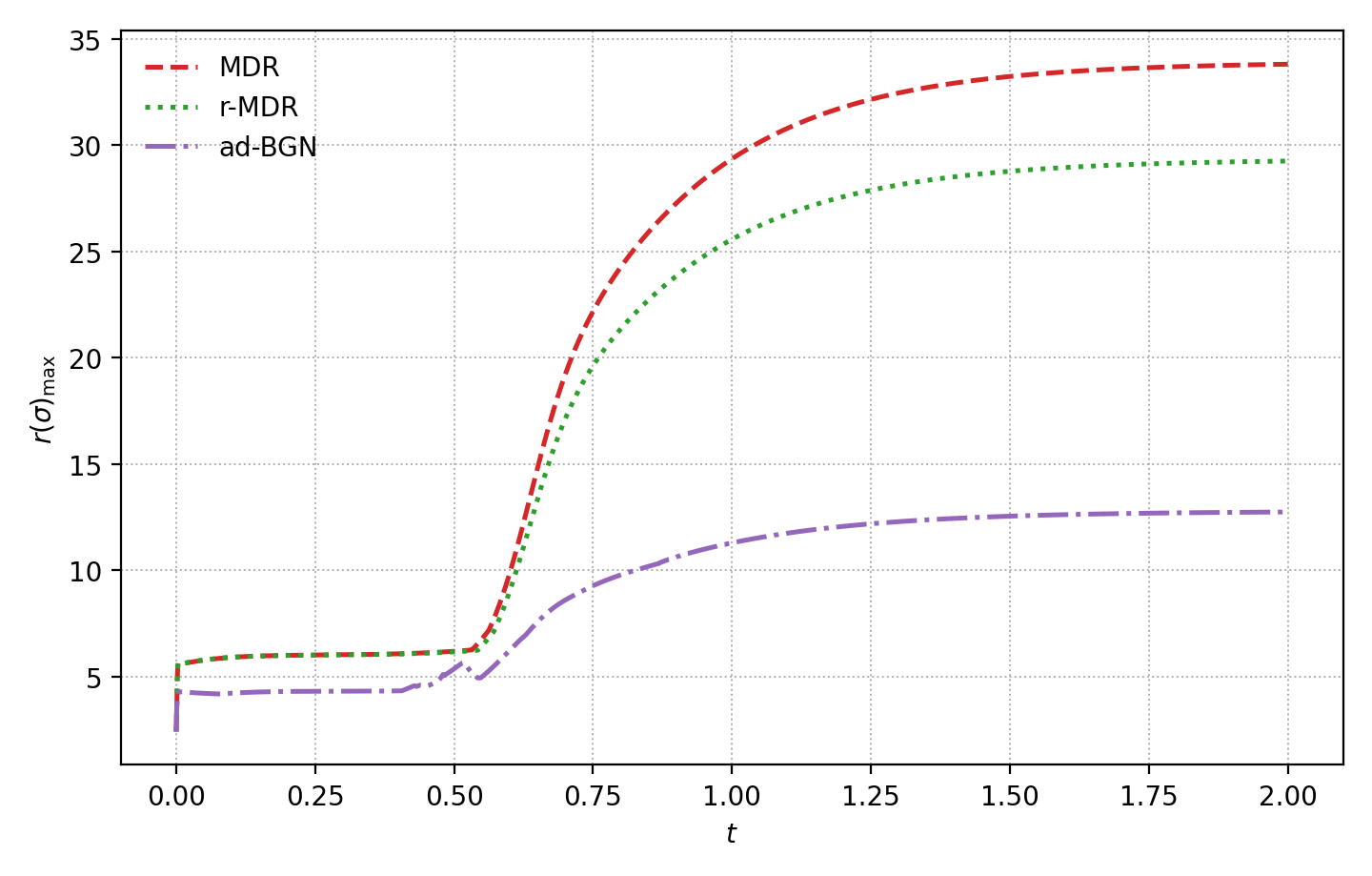}
  }%
  \subfloat[$\mathsf{r}_{\rm p95}^m$]{
    \includegraphics[width=0.48\textwidth]{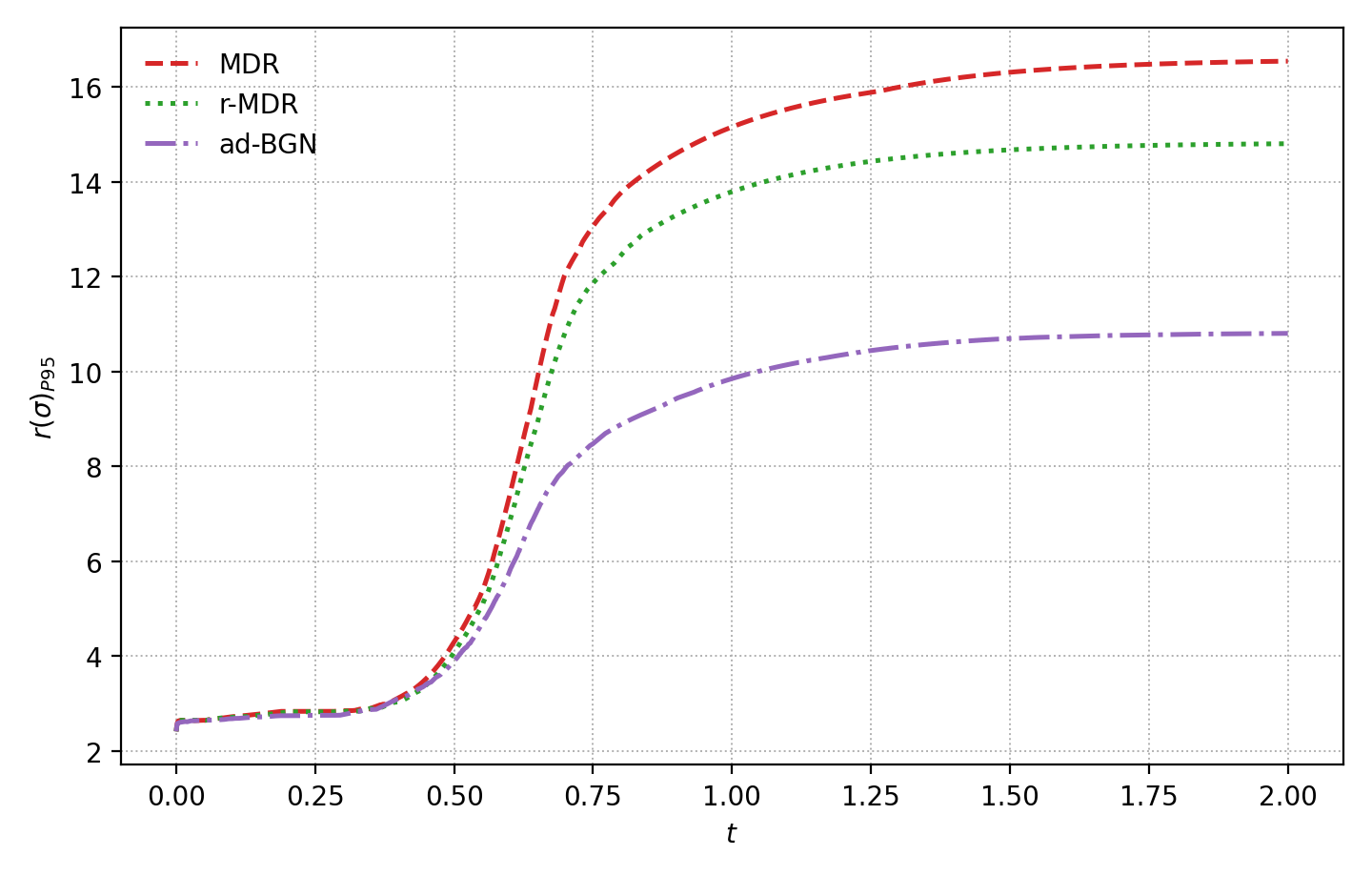}
  }
  \caption{Initial mesh, surface-area decay, $\mathsf{r}_{\max}^m$, and
  $\mathsf{r}_{\rm p95}^m$ in the cross-shaped-surface test.  The mesh-quality
  plots omit BGN, for which $\mathsf{r}_{\max}^m$ reaches 28530 at $t=1.5$.}
  \label{fig:exp56-area-quality}
\end{figure}

\begin{figure}[htbp]
  \centering
  \subfloat[$t=0$]{
    \includegraphics[width=0.25\textwidth]{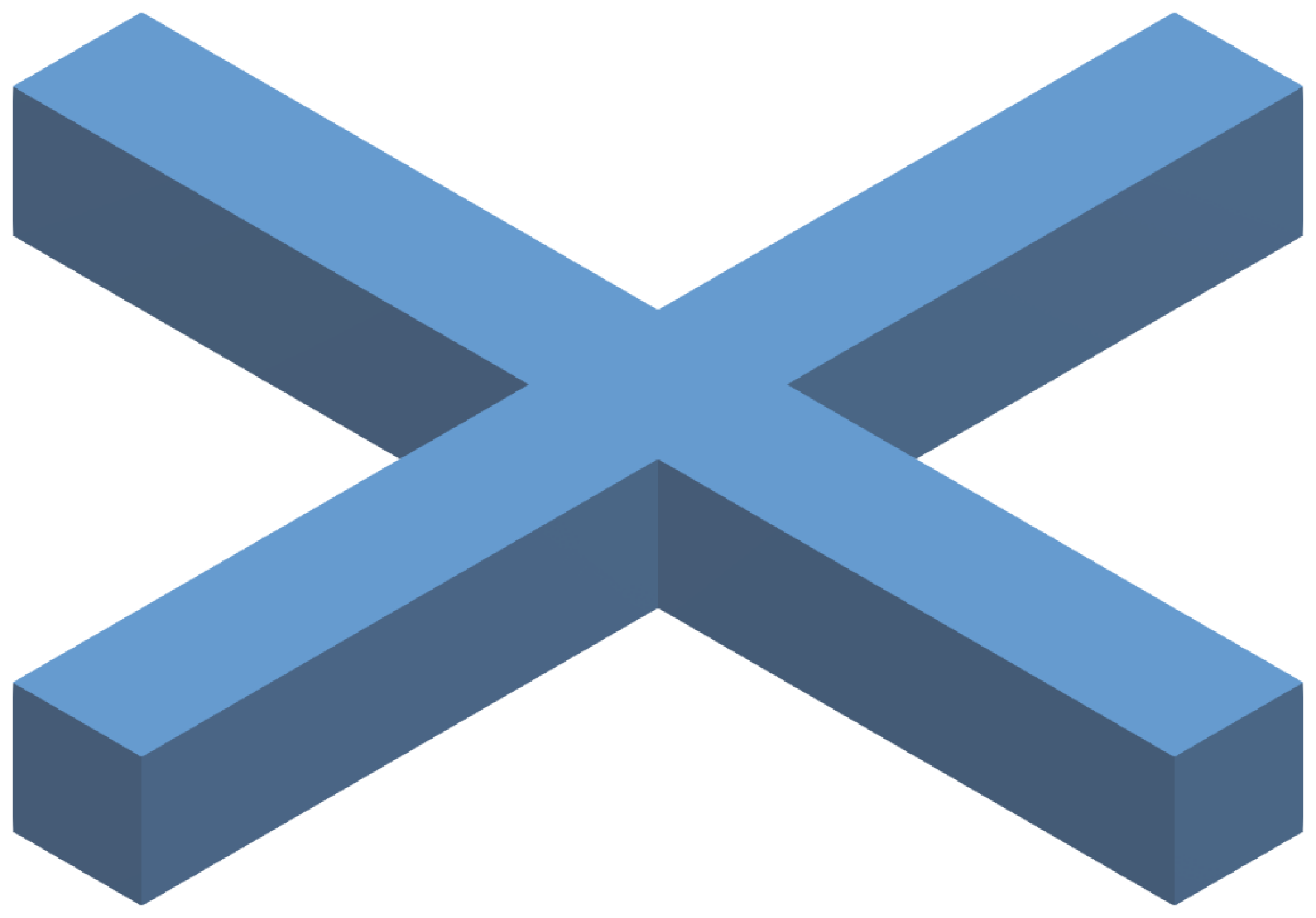}
  }
  \hspace{1.5cm}
  \subfloat[$t=0.2$]{
    \includegraphics[width=0.25\textwidth]{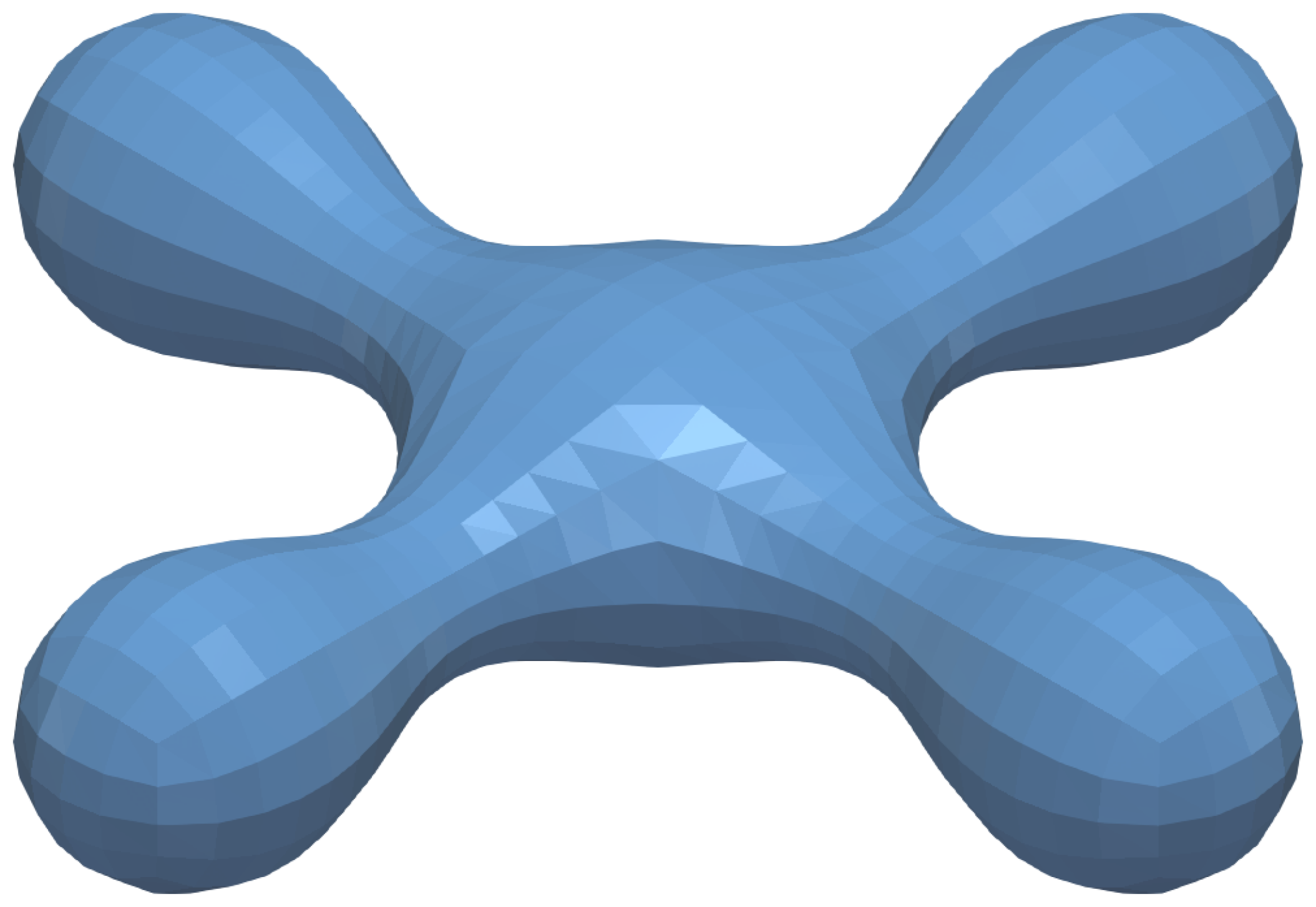}
  }\\[2ex]
  \subfloat[$t=0.6$]{
    \includegraphics[width=0.25\textwidth]{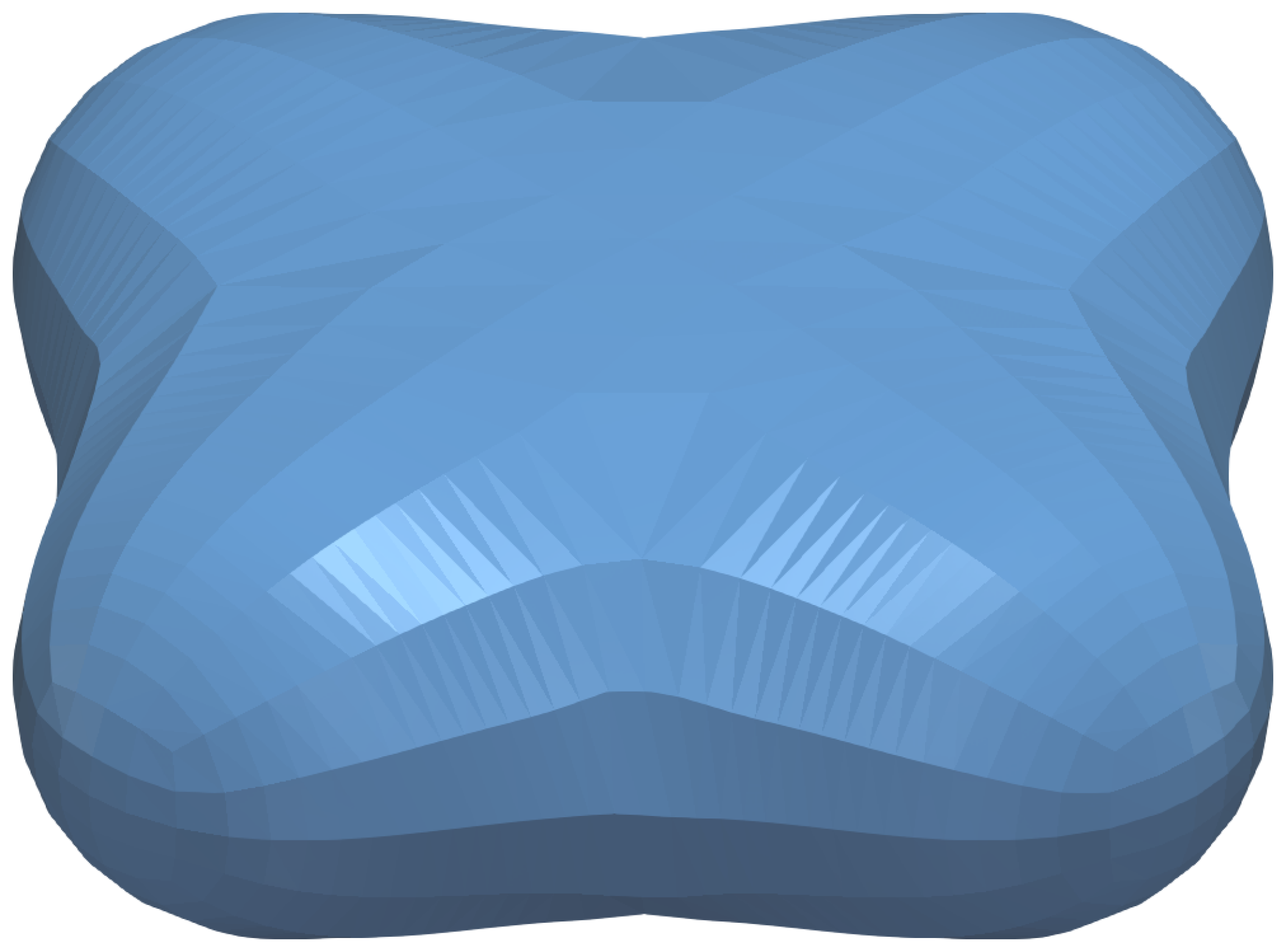}
  }
  \hspace{1.5cm}
  \subfloat[$t=1.5$]{
    \includegraphics[width=0.25\textwidth]{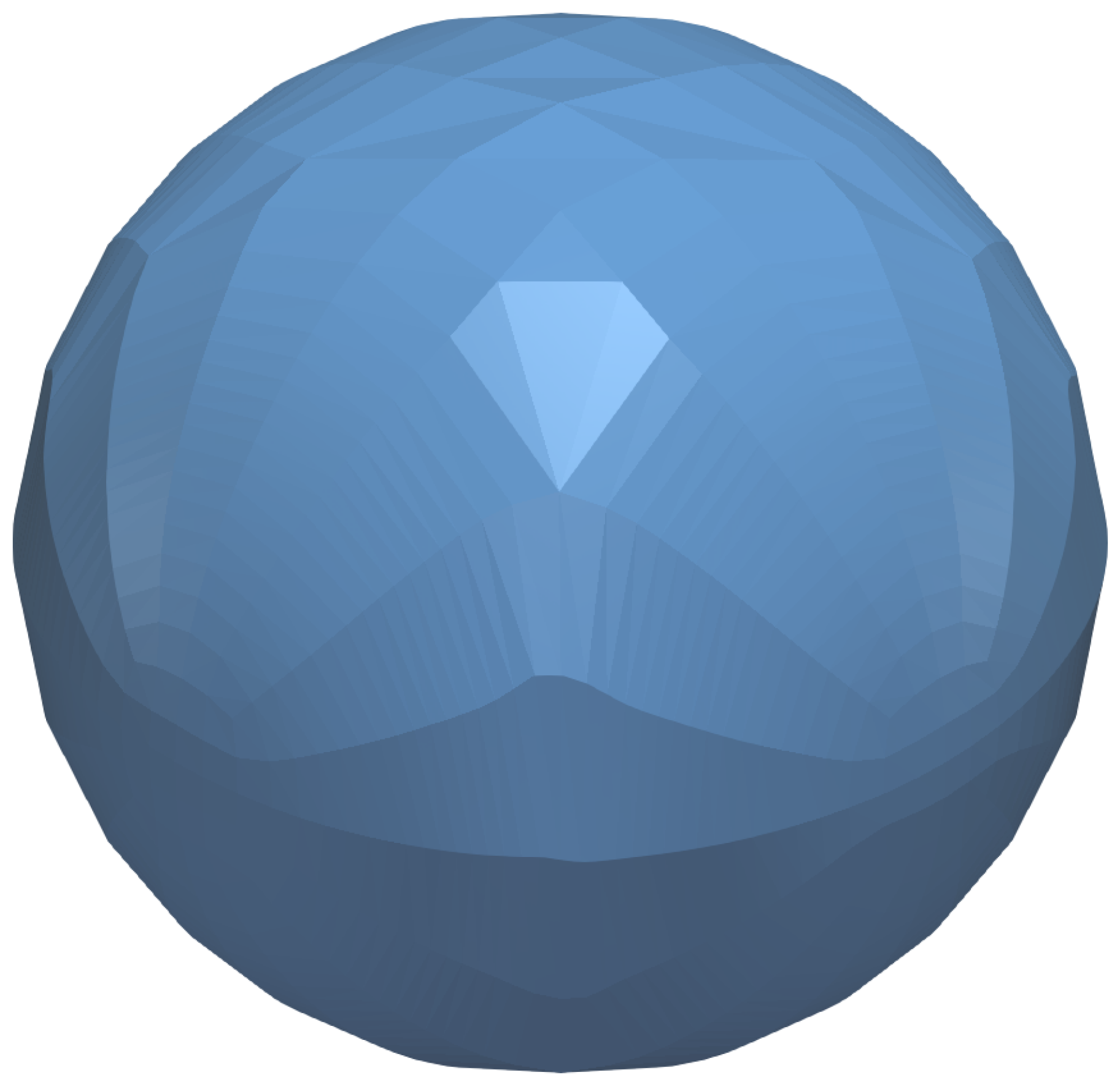}
  }
  \caption{Snapshots in the evolution of the cross-shaped surface.}
  \label{fig:exp56-evolution}
\end{figure}

\begin{table}[!htbp]
\centering
\small
\caption{The maximum relative volume loss in the cross-shaped-surface
test with $\tau=10^{-3}$.}
\label{tab:exp56-volume-loss}
\begin{tabular}{cccc}
\toprule
BGN & ad-BGN & MDR & r-MDR \\
\midrule
$2.3930\mathrm{E}{-2}$ & $2.4138\mathrm{E}{-2}$
& $2.4176\mathrm{E}{-2}$ & $2.4080\mathrm{E}{-2}$ \\
\bottomrule
\end{tabular}
\end{table}

\begin{figure}[!htbp]
  \centering
  \subfloat[BGN]{
    \includegraphics[width=0.29\textwidth]{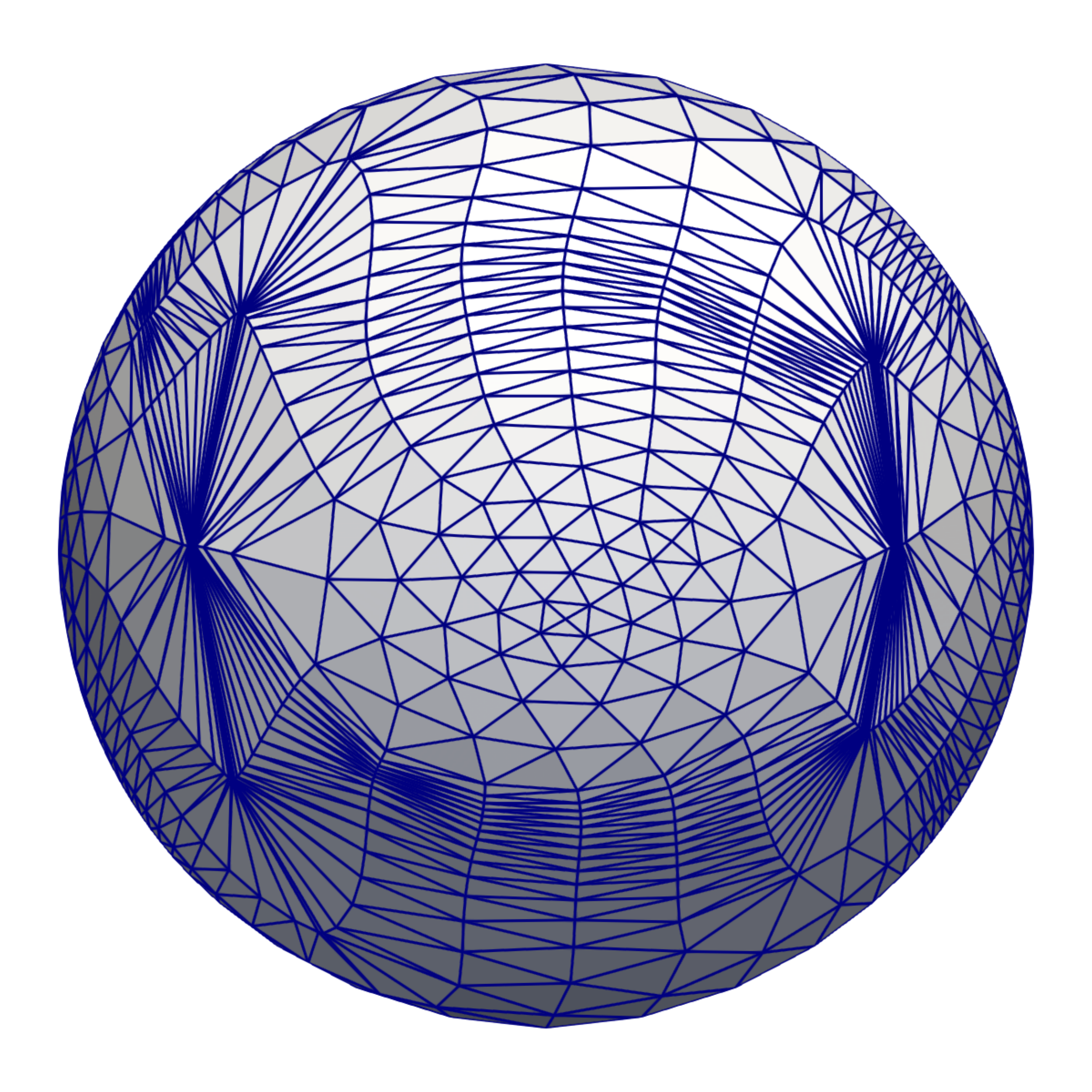}
  }
  \hspace{0.2em}
  \subfloat[ad-BGN]{
    \includegraphics[width=0.29\textwidth]{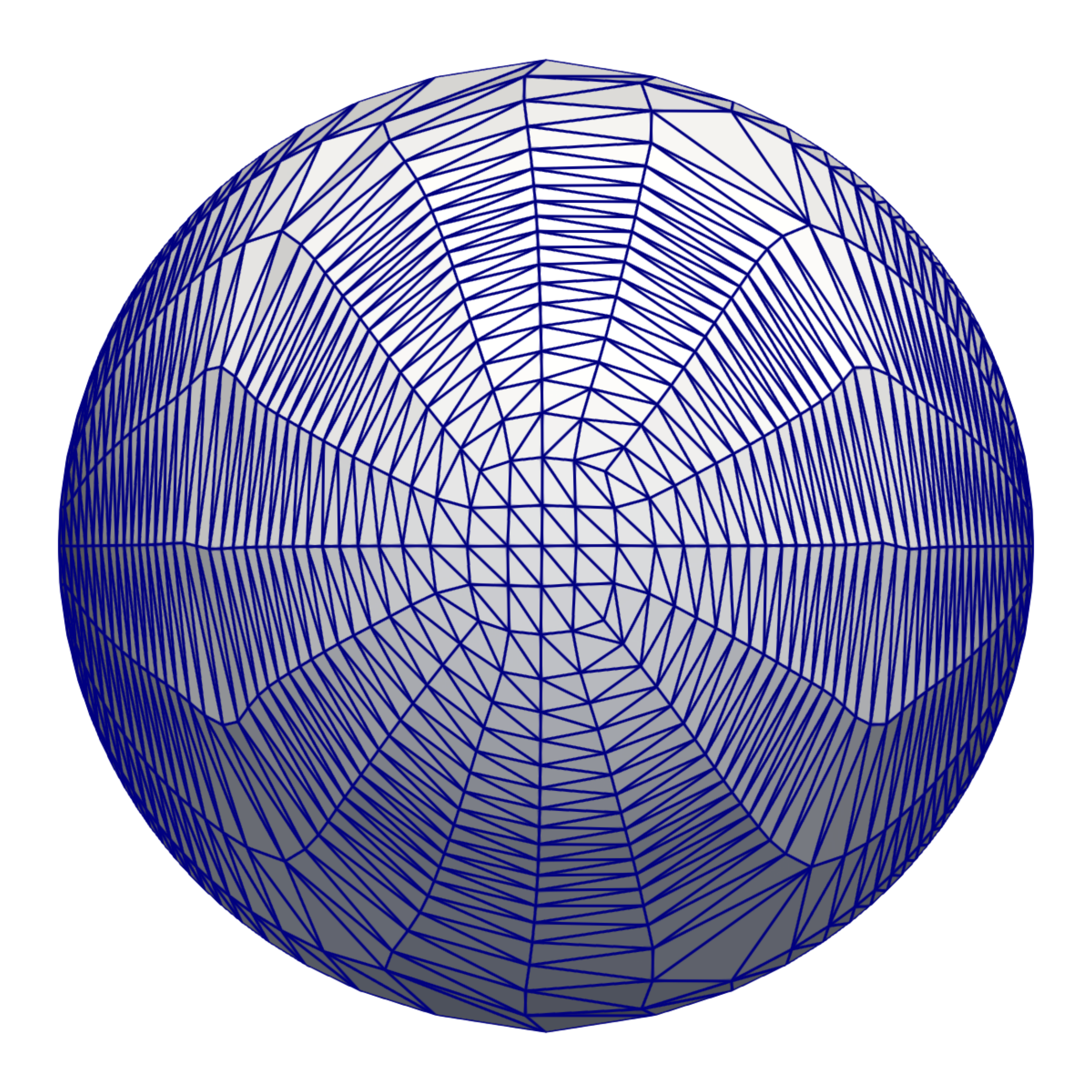}
  }\\[2ex]
  \subfloat[MDR]{
    \includegraphics[width=0.29\textwidth]{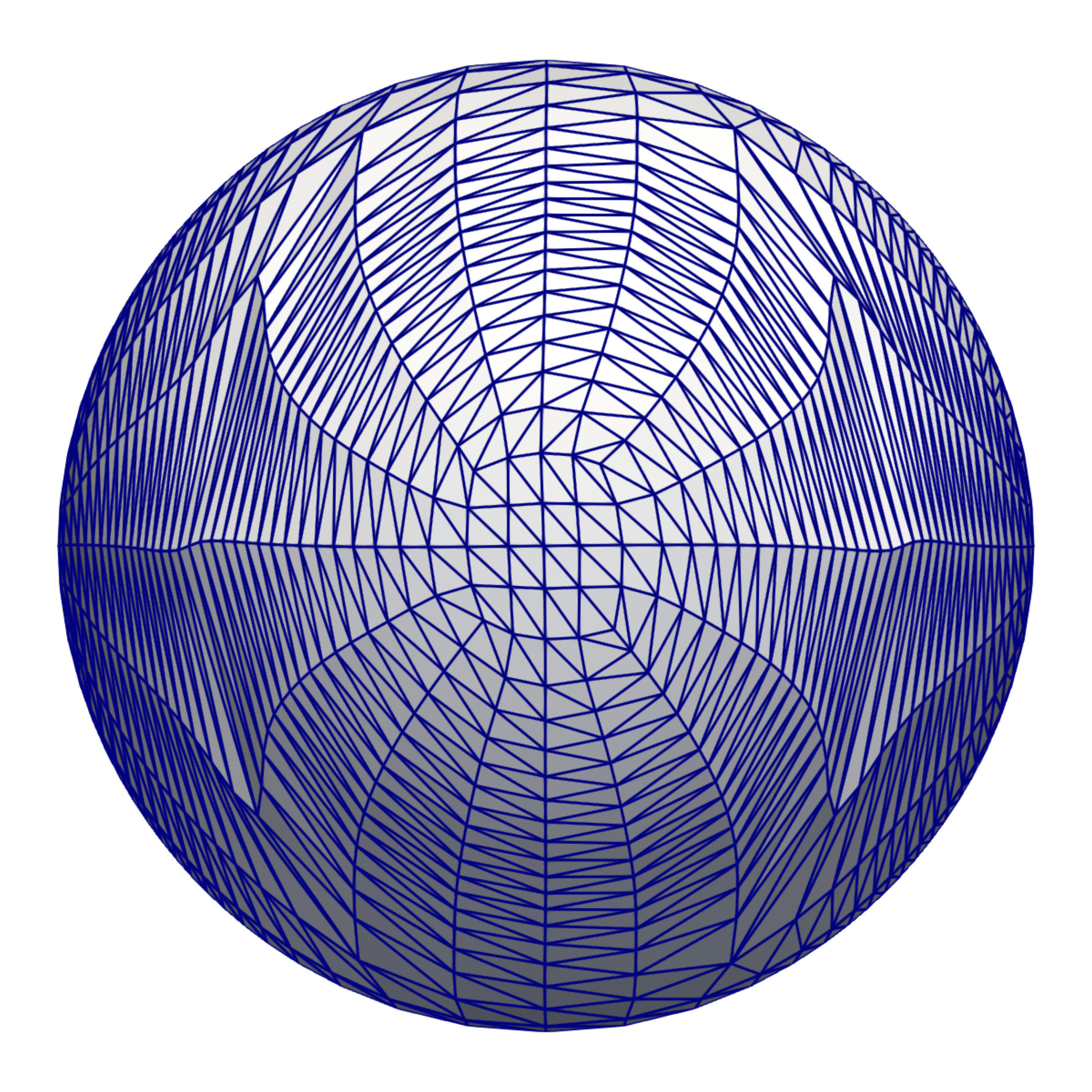}
  }
  \hspace{0.2em}
  \subfloat[r-MDR]{
    \includegraphics[width=0.29\textwidth]{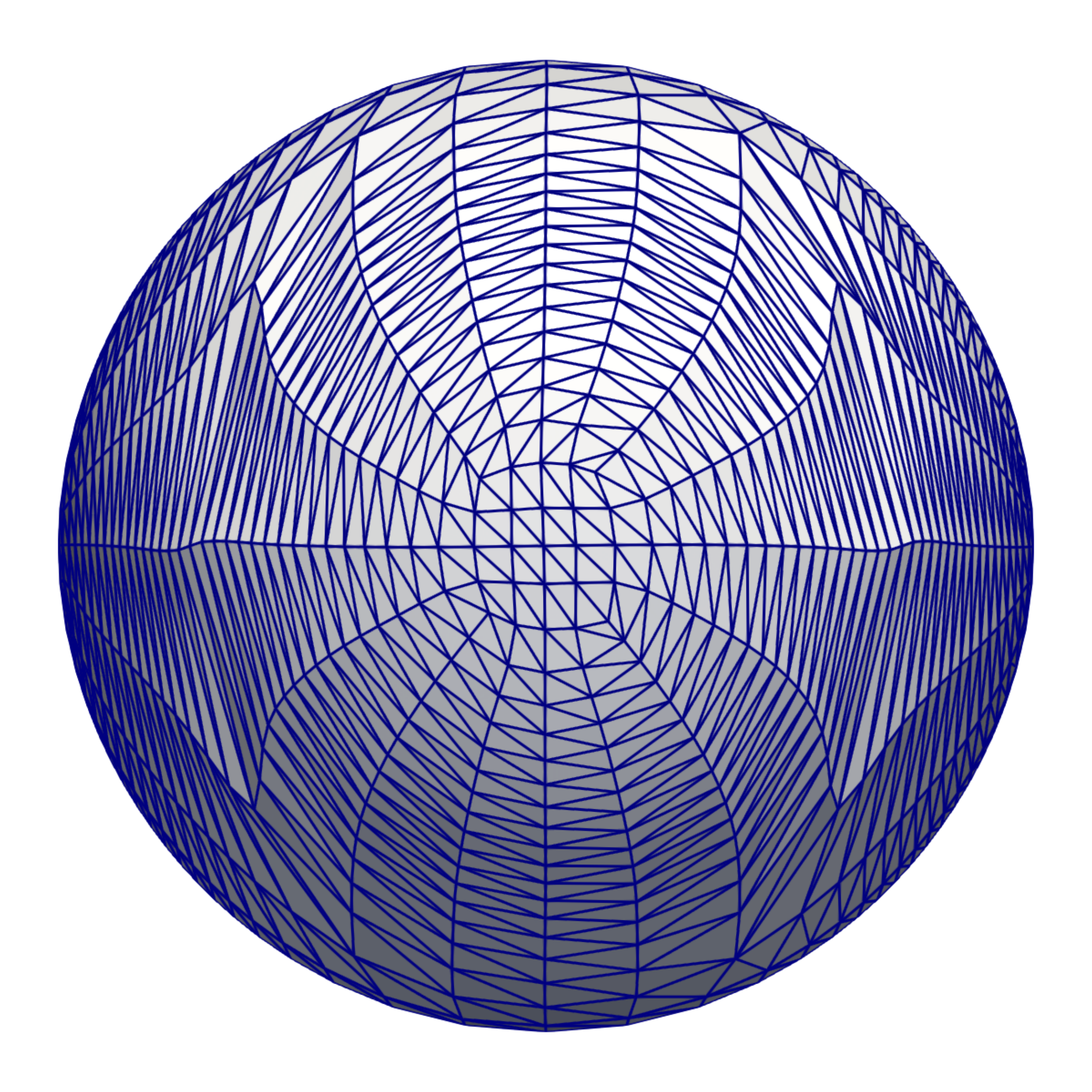}
  }
  \caption{The polyhedral surfaces at $T=1.5$ in the cross-shaped surface test with $\tau=10^{-3}$.}
  \label{fig:exp56-final-mesh}
\end{figure}

As shown in~\Cref{fig:exp56-area-quality}, all four schemes give nearly the same
monotone decay of surface area. The mesh-quality statistics distinguish the tangential
redistributions more clearly: ad-BGN keeps both $\mathsf{r}_{\max}^m$ and
$\mathsf{r}_{\rm p95}^m$ smallest throughout the later stage, while r-MDR remains
better controlled than MDR.~\Cref{tab:exp56-volume-loss} further shows that the
relative enclosed-volume loss remains small for all four schemes.

\Cref{fig:exp56-evolution} shows the evolution of the concave cross-shaped surface
toward the rounded sphere as the steady state. We also visualize the polyhedral surfaces
at the final time $T=1.5$ in~\Cref{fig:exp56-final-mesh}, which is consistent with the
quality statistics: ad-BGN gives the most regular mesh distribution at $T=1.5$, whereas
the BGN mesh contains severe localized deterioration.

\section{Conclusions}
\label{sec:conclusions}

We proposed a minimizing-movement framework for constructing energy-stable parametric
approximations of mean curvature flow and surface diffusion flow. Within this framework,
the metric dissipation determined the geometric normal velocity, while tangential motion
was incorporated through weak constraints. At the variational level, the key requirement
was admissibility: the identity map had to remain feasible so that it could be used as
the comparison map in the energy estimate.

Within this framework, we recovered the classical BGN scheme from an unconstrained
minimization problem and the dual-MDR scheme through the MDR constraint. We further
introduced an admissible variant of the BGN scheme based on vertex normals derived from
the mass-lumped vector curvature, thereby rendering the discrete BGN constraint
admissible. In addition, we introduced a relaxed MDR formulation, which provided additional flexibility in the selection of tangential motion.

The numerical experiments compared the mesh-redistribution properties of the admissible
and classical constraints while confirming the expected energy decay behavior. More
broadly, the framework provided a systematic route for constructing energy-stable
approximations based on other admissible weak constraints. Natural directions for future
work include extensions to anisotropic surface energies, higher-order time
approximations, and further exploration of admissible tangential motions and their
mesh-quality properties.

\section*{Acknowledgements}

This work was partially supported by the National Natural Science Foundation of China
(No. 12401572, Q.Z.) and the Key Project of the National Natural Science Foundation of
China (No. 12494555, Q.Z.).

\bibliographystyle{siam}
\bibliography{bib}

@article{BMN05,
title = {A finite element method for surface diffusion: the parametric case},
author = {Eberhard Bänsch and Pedro Morin and Ricardo H. Nochetto},
journal = {J. Comput. Phys.},
volume = {203},
number = {1},
pages = {321-343},
year = {2005},
issn = {0021-9991},
doi = {https://doi.org/10.1016/j.jcp.2004.08.022},
}

@article{BalzaniR12,
AUTHOR = {Balzani, Nadine and Rumpf, Martin},
TITLE = {A nested variational time discretization for parametric
{W}illmore flow},
JOURNAL = {Interfaces Free Bound.},
FJOURNAL = {Interfaces and Free Boundaries. Mathematical Analysis,
Computation and Applications},
VOLUME = {14},
YEAR = {2012},
NUMBER = {4},
PAGES = {431--454},
ISSN = {1463-9963,1463-9971},
MRCLASS = {53C44 (65N30)},
MRNUMBER = {3016467},
DOI = {10.4171/IFB/287},
URL = {https://doi.org/10.4171/IFB/287},
}

@article{BLani23,
author = {Bao, Weizhu and Li, Yifei},
title = {A Symmetrized Parametric Finite Element Method for Anisotropic Surface Diffusion in Three Dimensions},
journal = {SIAM J. Sci. Comput.},
volume = {45},
number = {4},
pages = {A1438-A1461},
year = {2023},
doi = {10.1137/22M1500575}
}

@article{BGN08Ani,
  title={Numerical approximation of anisotropic geometric evolution equations in the plane},
  author={Barrett, John W and Garcke, Harald and N{\"u}rnberg, Robert},
  journal={IMA J. Numer. Anal.},
  volume={28},
  number={2},
  pages={292--330},
  year={2008},
  publisher={Oxford University Press}
}

@article{BZ21SPFEM,
  title={A structure-preserving parametric finite element method for surface diffusion},
  author={Bao, Weizhu and Zhao, Quan},
  journal={SIAM J. Numer. Anal.},
  volume={59},
  number={5},
  pages={2775--2799},
  year={2021},
  publisher={SIAM}
}

@article{BGN08ani3d,
  title={A variational formulation of anisotropic geometric evolution equations in higher dimensions},
  author={Barrett, John W and Garcke, Harald and N{\"u}rnberg, Robert},
  journal={Numer. Math.},
  volume={109},
  number={1},
  pages={1--44},
  year={2008},
  publisher={Springer}
}

@article{Barrett20,
  title={Parametric finite element approximations of curvature driven interface evolutions},
  author={Barrett, John W and Garcke, Harald and N{\"u}rnberg, Robert},
  volume={21},
  pages={275--423},
  journal={Handb. Numer. Anal. (Andrea Bonito and Ricardo H. Nochetto, eds.)},
  year={2020},
  publisher={Elsevier, Amsterdam}
}

@article{BGN07,
  title={A parametric finite element method for fourth order geometric evolution equations},
  author={Barrett, John W and Garcke, Harald and N{\"u}rnberg, Robert},
  journal={J. Comput. Phys.},
  volume={222},
  number={1},
  pages={441--467},
  year={2007},
  publisher={Elsevier}
}

@article{BGN08parametric,
  title={On the parametric finite element approximation of evolving hypersurfaces in $\mathbb{R}^3$},
  author={Barrett, John W and Garcke, Harald and N{\"u}rnberg, Robert},
  journal={J. Comput. Phys.},
  volume={227},
  number={9},
  pages={4281--4307},
  year={2008},
  publisher={Elsevier}
}

@article{BGN08willmore,
  title={Parametric approximation of {W}illmore flow and related geometric evolution equations},
  author={Barrett, John W and Garcke, Harald and N{\"u}rnberg, Robert},
  journal={SIAM J. Sci. Comput.},
  volume={31},
  number={1},
  pages={225--253},
  year={2008},
  publisher={SIAM}
}

@article{GNZ25willmore,
author = {Garcke, Harald and N{\"u}rnberg, Robert and Zhao, Quan},
title = {Stable fully discrete finite element methods with {BGN} tangential motion for {Willmore} flow of planar curves},
JOURNAL = {J. Sci. Comput.},
volume = 105,
number = 2,
pages = {45},
numpages = {23},
year = 2025,
doi = {10.1007/s10915-025-03068-9},
url = {https://doi.org/10.1007/s10915-025-03068-9}
}

@article{cahn1994,
  title={Surface motion by surface diffusion},
  author={Cahn, John W and Taylor, Jean E},
  journal={Acta Metall. Mater.},
  volume={42},
  number={4},
  pages={1045--1063},
  year={1994},
  publisher={Elsevier}
}

@article{Davis04,
AUTHOR = {Davis, Timothy A.},
TITLE = {Algorithm 832: {UMFPACK} {V}4.3---an unsymmetric-pattern
multifrontal method},
JOURNAL = {ACM Trans. Math. Software},
FJOURNAL = {Association for Computing Machinery. Transactions on
Mathematical Software},
VOLUME = {30},
YEAR = {2004},
NUMBER = {2},
PAGES = {196--199},
ISSN = {0098-3500},
CODEN = {ACMSCU},
MRCLASS = {65F50 (65-04 65F05)},
MRNUMBER = {2075981},
DOI = {10.1145/992200.992206},
URL = {https://doi.org/10.1145/992200.992206},
}

@article{Deckelnick2005,
  title={Computation of geometric partial differential equations and mean curvature flow},
  author={Deckelnick, Klaus and Dziuk, Gerhard and Elliott, Charles M},
  journal={Acta Numer.},
  volume={14},
  pages={139--232},
  year={2005},
  publisher={Cambridge University Press}
}

@Article{Dziuk91,
  author = {Dziuk, Gerhard},
  title = {An algorithm for evolutionary surfaces},
  journal = {Numer. Math.},
  fjournal = {Numerische Mathematik},
  volume = 58,
  year = 1990,
  number = 6,
  pages = {603--611},
  issn = {0029-599X},
  coden = {NUMMA7},
  mrclass = {65D17 (53A10 65N30)},
  mrnumber = {1083523},
  doi = {10.1007/BF01385643},
  URL = {https://doi.org/10.1007/BF01385643}
}

@article{Duan24new,
  title={New artificial tangential motions for parametric finite element approximation of surface evolution},
  author={Duan, Beiping and Li, Buyang},
  journal={SIAM J. Sci. Comput.},
  volume={46},
  number={1},
  pages={A587--A608},
  year={2024},
  publisher={SIAM}
}

@article{Duan25,
author = {Duan, Beiping},
title = {Mesh-Preserving and Energy-Stable Parametric {FEM} for Geometric Flows of Surfaces},
journal = {SIAM J. Numer. Anal.},
volume = {63},
number = {2},
pages = {619-640},
year = {2025},
doi = {10.1137/24M1671542}
}

@article{Gao26,
author = {Gao, Guangwei and Garcke, Harald and Li, Buyang and Tang, Rong},
title = {An Energy-Stable Minimal Deformation Rate Scheme for Mean Curvature Flow and Surface Diffusion},
journal = {SIAM J. Sci. Comput. },
volume = {48},
number = {1},
pages = {A103-A131},
year = {2026},
doi = {10.1137/25M1753838},
}

@misc{Gao26dualMDR,
      title={Dual formulations of geometric curvature flows and their discretizations}, 
      author={Guangwei Gao and Buyang Li and Rong Tang},
      year={2026},
      howpublished ={arXiv: 2604.18288},
      archivePrefix={arXiv},
      primaryClass={math.NA},
      url={https://arxiv.org/abs/2604.18288}, 
}

@article{GJSZ25,
author = {Garcke, Harald and Jiang, Wei and Su, Chunmei and Zhang, Ganghui},
title = {Structure-Preserving Parametric Finite Element Method for Surface Diffusion Based on {Lagrange} Multiplier Approaches},
journal = {SIAM J. Sci. Comput.},
volume = {47},
number = {3},
pages = {A1983-A2011},
year = {2025},
doi = {10.1137/24M1687546},    
}

@article{GNZ26,
author = {Garcke, Harald and N\"{u}rnberg, Robert and Zhao, Quan},
title = {An Energy-Stable Parametric Finite Element Method for {Willmore} Flow with Normal-Tangential Velocity Splitting},
journal = {SIAM J. Sci. Comput.},
volume = {48},
number = {3},
pages = {A1235-A1259},
year = {2026},
doi = {10.1137/25M1773878}
}

@article{Hu22evolving,
  title={Evolving finite element methods with an artificial tangential velocity for mean curvature flow and {W}illmore flow},
  author={Hu, Jiashun and Li, Buyang},
  journal={Numer. Math.},
  volume={152},
  number={1},
  pages={127--181},
  year={2022},
  publisher={Springer}
}

@article{Helfrich73elastic,
  title={Elastic properties of lipid bilayers: theory and possible experiments},
  author={Helfrich, Wolfgang},
  journal={Z. Naturforsch C},
  volume={28},
  number={11-12},
  pages={693--703},
  year={1973},
  publisher={Verlag der Zeitschrift f\"ur Naturforschung}
}

@article{Kemmochi25structure,
  title={Structure-preserving numerical methods for constrained gradient flows of planar closed curves with explicit tangential velocities},
  author={Kemmochi, Tomoya and Miyatake, Yuto and Sakakibara, Koya},
  journal={Japan J. Ind. Appl. Math.},
  volume={42},
  number={2},
  pages={575--603},
  year={2025},
  publisher={Springer}
}

@article{LX25,
author = {Yihe Liu and Xianmin Xu},
title = {A variational discretization method for mean curvature flows by the {Onsager} principle},
volume={6}, 
url={https://www.global-sci.com/csiam-am/article/view/7878}, 
DOI={10.4208/csiam-am.SO-2024-0005}, 
number={1}, 
journal={CSIAM Trans. Appl. Math. }, 
year={2025}, 
month={Feb.}, 
pages={63--95} 
}

@article{DeTurck17,
  title={On approximations of the curve shortening flow and of the mean curvature flow based on the {DeTurck} trick},
  author={M. Elliott, Charles and Fritz, Hans},
  journal={IMA J. Numer. Anal. },
  volume={37},
  number={2},
  pages={543--603},
  year={2017},
  publisher={Oxford University Press}
}

@article{Mullins57,
  author={W. W. Mullins},
  title={Theory of thermal grooving},
  journal={J. Appl. Phys.},
  volume={28},
  number={3},
  pages={333--339},
  year={1957},
  publisher={American Institute of Physics}
}

@article{Mikula14,
author = {Mikula, Karol and Reme\v{s}\'{\i}kov\'{a}, Mariana and Sarkoci, Peter and \v{S}ev\v{c}ovi\v{c}, Daniel},
title = {Manifold Evolution with Tangential Redistribution of Points},
journal = {SIAM J. Sci. Comput. },
volume = {36},
number = {4},
pages = {A1384-A1414},
year = {2014},
doi = {10.1137/130927668},
}

@InProceedings{OlischlagerR09,
author={Olischl{\"a}ger, N. and Rumpf, M.},
editor={Hancock, Edwin R. and Martin, Ralph R. and Sabin, Malcolm A.},
title={Two Step Time Discretization of {W}illmore Flow},
booktitle={Mathematics of Surfaces XIII},
year=2009,
publisher={Springer},
address={Berlin},
pages={278--292},
}

@article{PAN26,
title = {When surface evolution meets {Fokker-Planck} equation: A novel tangential velocity model for uniform parametrization},
author = {Jiangong Pan and Guozhi Dong and Hailong Guo and Zuoqiang Shi},
journal = {J. Comput. Phys.},
volume = {549},
pages = {114604},
year = {2026},
issn = {0021-9991},
doi = {https://doi.org/10.1016/j.jcp.2025.114604}
}

@article{Remacle10,
author = {Remacle, J.-F. and Geuzaine, C. and Compère, G. and Marchandise, E.},
title = {High-quality surface remeshing using harmonic maps},
journal = {Int. J. Numer. Methods Eng. },
volume = {83},
number = {4},
pages = {403-425},
doi = {https://doi.org/10.1002/nme.2824},
year = {2010}
}

@Misc{RumpfSS25preprint,
author = {Martin Rumpf and Josua Sassen and Christoph Smoch},
title = {A hybrid minimizing movement and neural network approach to {W}illmore 
flow},
year = 2025,
howpublished = {arXiv:2502.14656},
url = {https://arxiv.org/abs/2502.14656}
}

@article{schoberl2014c++,
  title={C++ 11 implementation of finite elements in {NGSolve}},
  author={Sch{\"o}berl, Joachim},
  journal={Institute for analysis and scientific computing, Vienna University of Technology},
  volume={30},
  year={2014}
}

@article{Thompson12solid,
  title={Solid-state dewetting of thin films},
  author={Thompson, Carl V},
  journal={Annu. Rev. Mater. Res.},
  volume={42},
  pages={399--434},
  year={2012},
  publisher={Annual Reviews}
}

@article{Zhao2021energy,
  title={An energy-stable parametric finite element method for simulating solid-state dewetting},
  author={Zhao, Quan and Jiang, Wei and Bao, Weizhu},
  journal={IMA J. Numer. Anal.},
  volume={41},
  number={3},
  pages={2026--2055},
  year={2021},
  publisher={Oxford University Press}
}

\end{document}